\definecolor{darkblue}{rgb}{0,0,.5}
\newcommand{\Pfamily}{\mathcal{P}}
\newcommand{\xdomain}{\mathcal{X}}
\newcommand{\id}{{\rm id}}
\newcommand{\para}{\textup{par}}
\newcommand{\nonpara}{\textup{non-par}}
\begin{document}

\begin{center}
  {\Large The Right Complexity Measure in Locally Private Estimation: \\
    \vspace{.1cm}
    It is not the Fisher Information} \\
  \vspace{.3cm}
  \large{John C.\ Duchi ~~~~ Feng Ruan \\
    Stanford University} \\
  \vspace{.2cm}
  \large{September 2020}
\end{center}

\begin{abstract}

We identify fundamental tradeoffs between statistical utility and privacy
under local models of privacy in which data is kept private even from the
statistician, providing instance-specific bounds for private estimation and
learning problems by developing the \emph{local minimax risk}. In contrast
to approaches based on worst-case (minimax) error, which are
conservative, this allows us to evaluate the difficulty of individual
problem instances and delineate the possibilities for adaptation
in private estimation and inference.  Our main results
show that the local modulus of continuity of the estimand with respect to
the variation distance---as opposed to the Hellinger distance central to
classical statistics---characterizes rates of convergence under locally
private estimation for many notions of privacy, including differential
privacy and its relaxations. As consequences of these results, we identify
an alternative to the Fisher information for private estimation, giving a
more nuanced understanding of the challenges of adaptivity and optimality.

\end{abstract}


\newcommand{\losssq}{L_{\textup{sq}}}
\newcommand{\losssqv}{L_{\textup{sq},v}}

\section{Introduction}

The increasing collection of data at large scale---medical records, location
information from cell phones, internet browsing history---points to the
importance of a deeper understanding of the tradeoffs inherent between
privacy and the utility of using the data collected.  Classical mechanisms
for preserving privacy, such as permutation, small noise addition, releasing
only mean information, or basic anonymization are insufficient, and notable
privacy compromises with genomic data~\cite{HomerSzReDuTeMuPeStNeCr08} and
movie rating information~\cite{NarayananSh08} have caused the NIH to
temporarily stop releasing genetic information and Netflix to cancel a
proposed competition for predicting movie ratings. Balancing the tension
between utility and the risk of disclosure of sensitive information
is thus essential.

In response to these challenges, researchers in the statistics, databases,
and computer science communities have studied \emph{differential
  privacy}~\cite{Warner65, EvfimievskiGeSr03, DworkMcNiSm06, DworkSm09,
  HardtTa10, DuchiJoWa18, DuchiRo19} as a formalization of disclosure risk
limitation.  This literature discusses two notions of privacy: \emph{local}
privacy, in which data is privatized before it is even shared with a data
collector, and \emph{central} privacy, where a centralized curator maintains
the sample and guarantees that any information it releases is appropriately
private. The local model is stronger and entails some necessary loss of
statistical efficiency, yet its strong privacy protections encourage its
adoption. Whether for ease of regulatory compliance, for example with
European Union privacy rules~\cite{EU18}; for transparency and belief in the
importance of privacy; or to avoid risks proximate to holding sensitive
data, like hacking or subpoena risk; major technology companies have adopted
local differential privacy protections in their data collection and machine
learning tools. Apple provides local differential privacy in many of its
iPhone systems~\cite{ApplePrivacy17}, and Google has built systems supplying
central and local differential
privacy~\cite{ErlingssonPiKo14,AbadiChGoMcMiTaZh16}.  The broad impact of
privacy protections in billions of devices suggest we should carefully
understand the fundamental limitations and possibilities of learning with
local notions of privacy.

To address this challenge, we borrow from \citet{CaiLo15} to study the
\emph{local minimax complexity} of estimation and learning under local
privacy.  Worst-case notions of complexity may be too stringent for
statistical practice, and we wish to understand how difficult the
\emph{actual} problem we have is and whether we can adapt to this problem
difficulty, so that our procedures more efficiently solve easy problems---as
opposed to being tuned to worst-case scenarios. Our adoption of local
minimax complexity is thus driven by three desiderata, which \citet{CaiLo15}
identify: we seek fundamental limits on estimation and learning that (i) are
instance specific, applying to the particular problem at hand, (ii) are
(uniformly) attainable, in that there exist procedures to achieve the
instance-specific difficulty, and (iii) have super-efficiency limitations,
so that if a procedure achieves \emph{better} behavior than the lower bounds
suggest is possible, there should be problem instances in which the
procedure must have substantially worse behavior.  We provide characterize
the local minimax complexity of locally private estimation of
one-dimensional quantities, showing that this benchmark (nearly) always
satisfies desiderata (i) and (iii). Via a series of examples---some
specific, others general---we show that there are procedures whose risk is
of the order of the local minimax risk for all underlying (unknown)
populations.  As an essential part of this program is that the complexity is
(ii) attainable---which, to our knowledge, remains open even in the
non-private case---we view this paper as an initial foray into understanding
problem-specific optimality in locally private estimation.



\subsection{Contributions, outline, and related work}

Our development of instance-specific complexity notions
under privacy constraints allows us to quantify the
statistical price of privacy. Identifying the tension here is of course of
substantial interest, and \citet{DuchiJoWa18,DuchiJoWa13_focs} develop a
set of statistical and information-theoretic tools for understanding the
\emph{minimax} risk in locally differentially private settings, providing
the point of departure for our work.  To understand their and our coming
approach, we formalize our setting.

We have i.i.d.\ data $X_1, \ldots, X_n$ drawn according to a distribution
$P$ on a space $\mc{X}$. Instead of observing the original sample $\{X_i\}$,
however, the statistician or learner sees only \emph{privatized} data
$\{Z_i\}$, where $Z_i$ is drawn from a Markov kernel $\channel(\cdot \mid
X_i)$ conditional on $X_i$ (following information-theory, we
call $\channel$ the privacy \emph{channel}~\cite{CoverTh06}). We
allow the channel to be \emph{sequentially
  interactive}~\cite{DuchiJoWa18}, meaning that $Z_i$
may depend on the previous (private) observations $Z_1, \ldots, Z_{i
  - 1}$, i.e.\
\begin{equation}
  \label{eqn:sequential-interactive}
  Z_i \mid X_i = x, Z_1, \ldots, Z_{i-1}
  \sim \channel(\cdot \mid x, Z_{1:i-1}).
\end{equation}
This notion of interactivity is important for procedures, such as stochastic
gradient methods~\cite{DuchiJoWa18} or the one-step-corrected estimators we
develop in the sequel, which modify the mechanism after some number of
observations to more accurately perform inference.

The statistical problems we consider are, abstractly, as follows.  Let
$\mc{P}$ be a family of distributions, and let $\theta: \mc{P} \to \Theta
\subset \R^d$
be a parameter we wish to estimate and belonging to $\Theta$,
where $\theta(P)$ denotes the target parameter. Let
$\loss : \R^d \to \R_+$ be a symmetric quasiconvex loss, where we assume that
$\loss(\zeros) = 0$.  A typical example
is the mean $\theta(P) = \E_P[X]$ with squared error
$\loss(\theta - \theta(P)) = (\theta - \E_P[X])^2$.
Let $\channeldistset$ be a collection of private
channels, for example, $\diffp$-differentially private channels (which
we define in the sequel). The \emph{private minimax risk}~\cite{DuchiJoWa18}
is
\begin{equation}
  \label{eqn:private-minimax}
  \minimax_n(\loss, \mc{P}, \channeldistset)
  \defeq \inf_{\what{\theta}, \channel \in \channeldistset}
  \sup_{P \in \mc{P}}
  \E_{\channel \circ P}\left[\loss(\what{\theta}(Z_1, \ldots, Z_n)
    - \theta(P))\right]
\end{equation}
where $\channel \circ P$ denotes
the marginal $X_i \sim P$ and $Z_i$ drawn
conditionally~\eqref{eqn:sequential-interactive}.  \citet{DuchiJoWa18}
provide upper and lower bounds on this quantity when $\channeldistset$ is
the collection of $\diffp$-locally differentially private channels,
developing strong data processing inequalities to quantify the costs of
privacy.

The worst-case nature of the formulation~\eqref{eqn:private-minimax} gives
lower bounds that may be too pessimistic for practice, and it prohibits a
characterization of problem-specific difficulty.
Accordingly, we adopt a
\emph{local minimax approach}, which builds out of the classical statistical
literature on hardest one-dimensional alternatives that begins with
Stein~\cite{Stein56a, Birge83, DonohoLi87,DonohoLi91a,DonohoLi91b, CaiLo15,
  ChatterjeeDuLaZh16}.  To that end, we define the
\emph{local minimax risk} at the distribution $P_0$ for the set of channels
$\channeldistset$ as
\begin{equation}
  \label{eqn:private-local-minimax}
  \localminimax_n(P_0, \loss, \mc{P}, \channeldistset)
  \defeq \sup_{P_1 \in \mc{P}}
  \inf_{\what{\theta},
    \channel \in \channeldistset}
  \max_{P \in \{P_0, P_1\}}
  \E_{Q \circ P}\left[\loss(\what{\theta}(Z_1, \ldots, Z_n) - \theta(P))\right]. 
\end{equation}
The quantity~\eqref{eqn:private-local-minimax} measures the
difficulty of the loss minimization problem for a \emph{particular
  distribution} $P_0$ under the privacy constraints $\channeldistset$
characterizes, and at this distinguished distribution, we look for the
hardest alternative distribution $P_1 \in \mc{P}$. As we shall see,
the definition~\eqref{eqn:private-local-minimax} indeed becomes local,
if $P_1$ is far from $P_0$, then it is easy to develop an estimator
$\what{\theta}$ distinguishing $P_0$ and $P_1$, so that (for large
$n$) the supremum is essentially constrained to a neighborhood of $P_0$.

To situate our contributions, let us first consider the non-private
local minimax
complexity, when $\channeldistset =
\{\id\}$ (the identity mapping).
Throughout, we will use the shorthand
\begin{equation*}
  \localminimax_n(P_0, \loss, \mc{P}) \defeq
  \localminimax_n(P_0, \loss, \mc{P}, \{\textup{id}\})
\end{equation*}
for the non-private local minimax risk.
We wish to estimate a linear
function $v^T \theta$ of the parameter $\theta$
with (projected) square loss
$\losssqv(t) = (v^T t)^2$. In the
classical setting of a parametric family $\mc{P} =
\{P_\theta\}_{\theta \in \Theta}$ with Fisher information matrix
$\fisher[\theta]$, then (as we describe more formally in
Section~\ref{sec:classical-local-minimax}) the Fisher information bound for
the parameter $\theta_0$ is
\begin{equation}
  \label{eqn:classic-local-minimax}
  \localminimax_n(P_{\theta_0}, \losssqv, \mc{P}, \{\id\})
  \asymp \frac{1}{n} \E\left[\left(v^T Z\right)^2\right]
  ~~ \mbox{for} ~~
  Z \sim \normal\big(0, \fisher[\theta_0]^{-1}\big),
\end{equation}
where $\asymp$ denotes equality to within numerical constants.  More
generally, if we wish to estimate a functional $\theta(P) \in \R$ of $P$,
\citet{DonohoLi87,DonohoLi91a,DonohoLi91b} show how the \emph{modulus of
  continuity} takes the place of the classical information bound. Again
considering the squared error $\losssq(t) = t^2$,
define the Hellinger modulus of continuity of $\theta(\cdot)$ at $P_0 \in
\mc{P}$ by
\begin{equation}
  \label{eqn:hellinger-modcont}
  \modcont[{\rm hel}](\delta; P_0, \mc{P})
  \defeq \sup_{P_1 \in \mc{P}}
  \left\{|\theta(P_0) - \theta(P_1)| ~ \mbox{s.t.}~ P_1 \in \mc{P},
  \dhel(P_0, P_1) \le \delta \right\}
\end{equation}
where $\dhel^2(P_0, P_1) = \half \int (\sqrt{dP_0} - \sqrt{dP_1})^2$.  In
the local minimax case, characterizations via a local modulus are available
in some problems~\cite{CaiLo15, ChatterjeeDuLaZh16},
where $\localminimax_n(P_0, \losssq, \mc{P}) \asymp
\helmod^2(n^{-1/2}; P_0, \mc{P})$, while
under mild regularity conditions, the \emph{global modulus} $\sup_{P \in
  \mc{P}} \helmod(\delta; P, \mc{P})$ governs non-private global
minimax risk:
(often) one has $\minimax_n(\losssq, \mc{P}) \asymp \sup_{P_0 \in \mc{P}}
\helmod(n^{-1/2}; P_0, \mc{P})$~\cite{Birge83, DonohoLi87, DonohoLi91a,
  DonohoLi91b}.


In contrast, the work of \citet{DuchiJoWa18,DuchiJoWa13_focs} suggests that
for $\diffp$-locally differentially private estimation, we should replace
the Hellinger distance by \emph{variation distance}.  In the case of
higher-dimensional problems, there are additional dimension-dependent
penalties in estimation that local differential privacy makes unavoidable,
at least in a minimax sense~\cite{DuchiJoWa18}. In work independent of and
contemporaneous to our own, \citet{RohdeSt18} build off
of~\cite{DuchiJoWa18} to show that (non-local) minimax rates of convergence
under $\diffp$-local differential privacy are frequently governed by a
global modulus of continuity, except that the
variation distance $\tvnorm{P_0 - P_1} = \sup_A |P_0(A) - P_1(A)|$ replaces
the Hellinger distance $\dhel$. They also exhibit a
mechanism that is minimax optimal for ``nearly'' linear functionals based on
randomized response~\cite[Sec.~4]{Warner65,RohdeSt18}. Thus, locally
differentially private procedures give rise to a different geometry than
classical statistical problems.

We are now in a position for a high-level description of our results, which
apply in a variety of locally private estimation settings consisting of
weakenings of $\diffp$-differential privacy, whose definitions we formalize
in Section~\ref{sec:definition-of-privacy}. We provide a precise
characterization of the local minimax
complexity~\eqref{eqn:private-local-minimax} in these settings.  If we
define the local modulus of continuity at $P_0$ by
\begin{equation*}
  \tvmod(\delta; P_0, \mc{P})
  \defeq \sup_{P \in \mc{P}} \left\{|\theta(P_0) - \theta(P)|
  ~\mbox{s.t.}~ \tvnorm{P - P_0} \le \delta \right\},
\end{equation*}
then a consequence
of Theorem~\ref{theorem:modulus-of-continuity} is that for
the squared loss and
$\diffp$-locally private channels $\channeldistset_\diffp$,
\begin{equation*}
  \localminimax_n(P_0, \losssq, \mc{P}, \channeldistset_\diffp)
  \asymp \tvmod^2\left((n\diffp^2)^{-1/2}; P_0, \mc{P}\right).
\end{equation*}
We provide this characterization in more detail and for general losses in
Section~\ref{sec:local-minimax}.  Moreover, we show a super-efficiency
result that any procedure that achieves risk better than the local minimax
complexity at a distribution $P_0$ \emph{must} suffer higher risk at another
distribution $P_1$, so that this characterization does indeed satisfy our
desiderata of an instance-specific complexity measure.

The departure of these risk bounds from the typical Hellinger
modulus~\eqref{eqn:hellinger-modcont} has consequences for locally private
estimation and adaptivity of estimators, which we address for parametric
problems and examples in Section~\ref{sec:examples} and for general
estimation in Section~\ref{sec:general-influence-functions}.  Instead of the
Fisher information, an alternative we term the \emph{$\bigLone$-information}
characterizes the complexity of locally private estimation.  A challenging
consequence of these results is that, for some parametric models (including
Bernoulli estimation and binomial logistic regression), the local
complexity~\eqref{eqn:private-local-minimax} is \emph{independent} of the
underlying parameter: nominally easy problems (in the Fisher information
sense) are not so easy under local privacy constraints.  Our proofs rely on novel
Markov contraction inequalities for divergence measures, which strengthen
classical strong data processing
inequalities~\cite{CohenKeZb98,DelMoralLeMi03, DuchiJoWa18}.

Developing procedures achieving the local minimax
risk~\eqref{eqn:private-local-minimax} is challenging, but we show that
locally uniform convergence is asymptotically possible in a number of cases
in Sections~\ref{sec:examples} and~\ref{sec:general-influence-functions},
including well- and mis-specified exponential family models, using
stochastic gradient methods or one-step corrected estimators. An important
point of our results (Sec.~\ref{sec:adaptation-of-models}) is that the local
private minimax risk---sometimes in distinction from the non-private
case---depends strongly on the assumed family $\mc{P}$, making the
development of private adaptive estimators challenging.  We use a protein
expression-prediction problem in Section~\ref{sec:experiments} to compare
our locally optimal procedures with minimax optimal
procedures~\cite{DuchiJoWa18}; the experimental results suggests that the
locally optimal procedures outperform global minimax procedures, though
costs of privacy still exist.


\paragraph{Notation:}
We use a precise big-O notation throughout the paper, where for functions
$f, g : \mc{X} \to \R_+$, $g(x) = O(f(x))$ means that there exists a
numerical (universal) constant $C < \infty$ such that $g(x) \le C f(x)$; we
use $g(x) \lesssim f(x)$ to mean the same. We write $O_t(\cdot)$ when
the constant $C$ may depend on an auxiliary parameter $t$. We write
$g(x) \asymp f(x)$ if both $g(x) \lesssim f(x)$ and $f(x) \lesssim g(x)$.
If $g(x) = o(f(x))$ as $x \to x_0$, we mean that $\limsup_{x \to x_0} g(x) /
f(x) = 0$.  We let $\lpP$ be the collection of $g : \mc{X} \to \R^d$ with
$\int \norm{g(x)}^p dP(x) < \infty$, where $p = \infty$ is the set of
essentially bounded $g$; the dimension $d$ is tacit.  For a sequence of
distributions $P_n$, we write convergence in distribution $X_n \cd_{P_n} X$
to mean that for any bounded continuous $f$, $\E_{P_n}[f(X_n)] \to
\E[f(X)]$.


\section{Preliminaries}
\label{sec:preliminaries}

We begin in
Section~\ref{sec:definition-of-privacy} with definitions and some brief
discussion of the definitions of privacy we consider. To help
situate our approach, we discuss local
minimax complexity without privacy in
Section~\ref{sec:classical-local-minimax}.  There are several
plausible notions of attainment of the local minimax risk---all related to
desideratum (ii) in the introduction that the risk be achievable---so we
conclude in Section~\ref{sec:attainment-sortof} by giving several related
results, including an asymptotic and locally uniform convergence guarantee
that will be what we typically demonstrate for our procedures.  In spite of
the (sometimes) asymptotic focus, which builds out of Le Cam's quadratic
mean differentiability theory and various notions of efficiency in
semiparametric models~\cite{LeCam86, LeCamYa00, Pollard97, VanDerVaart98,
  BickelKlRiWe98}, we will typically achieve optimality only to within
numerical constants---getting sharp constants appears challenging when we
allow arbitrary privatization schemes and sequential
interactivity~\eqref{eqn:sequential-interactive}.


\subsection{Definitions of Local Privacy}
\label{sec:definition-of-privacy}

With the
notion~\eqref{eqn:sequential-interactive} of sequentially interactive
channels, where the $i$th private observation is drawn conditionally on the
past as $Z_i \mid X_i = x, Z_1, \ldots, Z_{i-1} \sim \channel(\cdot \mid x,
Z_{1:i-1})$, we consider several privacy definitions.
First is \emph{local differential privacy}, which
Warner~\cite{Warner65} proposes (implicitly) in his 1965 work on survey
sampling, and which \citet{EvfimievskiGeSr03} and \citet{DworkMcNiSm06} make
explicit.

\begin{definition}
  \label{definition:local-dp}
  The channel $\channel$ is \emph{$\diffp$-locally differentially private}
  if for all $i \in \N$, $x, x' \in \mc{X}$, and
  $z_{1:i-1} \in \mc{Z}^{i-1}$,
  \begin{equation*}
    \sup_{A \in \sigma(\mc{Z})}
    \frac{\channel(A \mid x, z_{1:i-1})}{\channel(A \mid x', z_{1:i-1})}
    \le e^\diffp.
  \end{equation*}
  The channel $\channel$ is \emph{non-interactive} if
  for all $z_{1:i-1} \in \mc{Z}^{i-1}$ and $A \in \sigma(\mc{Z})$,
  \begin{equation*}
    \channel(A \mid x, z_{1:i-1}) = \channel(A \mid x)
  \end{equation*}
\end{definition}

\citet{DuchiJoWa18} consider this notion of privacy, developing its
consequences for minimax optimal estimation. An equivalent
view~\cite{WassermanZh10} is that an adversary knowing the data is either
$x$ or $x'$ cannot accurately test, even conditional on the output $Z$,
whether the generating data was $x$ or $x$' (the sum of
Type I and II errors is at least $\frac{1}{1 + e^\diffp}$). To mitigate the
consequent difficulties for estimation and learning with differentially
private procedures, researchers have proposed weakenings of
Definition~\ref{definition:local-dp}, which we also consider.\footnote{We
  ignore $(\diffp, \delta)$-approximate differential privacy,
  as for locally private estimation, it is essentially equivalent
  to $\diffp$-differential privacy~\cite[e.g.][Appendix~D.1]{DuchiRo19}.}
These
repose on $\alpha$-R\'{e}nyi-divergences, defined for $\alpha \ge 1$ by
\begin{equation*}
  \rendiv{P}{Q}
  \defeq \frac{1}{\alpha - 1}
  \log \int \left(\frac{dP}{dQ}\right)^\alpha dQ.
\end{equation*}
For $\alpha = 1$ one takes the limit $\alpha \downarrow
1$, yielding $\rendiv{P}{Q} = \dkl{P}{Q}$, and
for $\alpha = \infty$ one has $\rendiv{P}{Q} = \esssup \log \frac{dP}{dQ}$.
\citet{Mironov17} then proposes the following definition:
\begin{definition}
  \label{definition:local-renyi}
  The channel $\channel$ is \emph{$(\alpha, \diffp)$-R\'{e}nyi locally
    differentially private (RDP)} if for all $x, x' \in \mc{X}$, and $z_{1:i-1}
  \in \mc{Z}$, we have
  \begin{equation*}
    \rendiv{\channel(\cdot \mid x, z_{1:i-1})}{
      \channel(\cdot \mid x', z_{1:i-1})}
    \le \diffp.
  \end{equation*}
\end{definition}
\noindent
This definition simplifies \emph{concentrated differential
  privacy}~\cite{DworkRo16, BunSt16} by requiring that it hold only for a
single fixed $\alpha$, and it has allowed
effective private methods for large scale machine
learning~\cite{AbadiChGoMcMiTaZh16}.


\newcommand{\prior}{\pi}

The choice $\alpha = 2$ in Definition~\ref{definition:local-renyi}
is salient and important in our
analysis. Consider a prior on points $x, x'$, represented
by $\prior(x) \in [0,1]$ and $\prior(x') = 1 - \prior(x)$, and
the posterior $\prior(x \mid Z)$ and $\prior(x' \mid Z)$ after observing the
private quantity $Z \sim \channel(\cdot \mid x)$. Then $(2,
\diffp)$-R\'{e}nyi privacy is equivalent~\cite[Sec.~VII]{Mironov17} to the
the prior and posterior odds of $x$ against $x'$ being
close in expectation:
\begin{equation*}
  \E\left[\frac{\prior(x \mid Z) / \prior(x' \mid Z)}{
      \prior(x) / \prior(x')} \mid x \right] \le e^\diffp
\end{equation*}
for all two-point priors $\prior$, where the expectation is taken
over $Z \mid x$. (For $\diffp$-differential
privacy, the inequality holds for \emph{all} $Z$ without expectation).
As R\'{e}nyi divergences are monotonic in $\alpha$
(cf.~\cite[Thm.~3]{ErvenHa14}), any
$(\alpha, \diffp)$-R\'{e}nyi private channel
is $(\alpha', \diffp)$-R\'{e}nyi private for $\alpha' \le \alpha$.
Thus, any lower bound we prove on estimation for
$(\alpha = 2, \diffp$)-local RDP implies an identical lower bound
for $\alpha' \ge 2$.

The definitions provide varying levels of privacy. It is immediate that if a
channel is $\diffp$-differentially private, then it is $(\alpha,
\diffp)$-R\'{e}nyi locally private for any $\alpha$. More sophisticated
bounds are possible. Most importantly,
$\diffp$-differential privacy (Definition~\ref{definition:local-dp}) implies
$(\alpha, 2 \alpha \diffp^2)$-R\'{e}nyi differential privacy
(Definition~\ref{definition:local-renyi}) for all $\alpha \ge 1$.  For
$\alpha = 2$, we can tighten this to $(2, \min\{\frac{3}{2} \diffp^2, 2
\diffp\})$-RDP. We therefore write our lower bounds to apply for
$(2, \diffp^2)$-R\'{e}nyi differentially private channels; this implies
lower bounds for all $(\alpha, \diffp^2)$-RDP channels, and
(as differential privacy is stronger than R\'{e}nyi privacy) implies
lower bounds for any $\diffp$-locally differentially private channels.

\subsection{A primer on local minimax complexity}
\label{sec:classical-local-minimax}

We briefly review local minimax complexity to give intuition for and
motivate our approach. The starting point is \citet{Stein56a}, who considers
estimating a nonparametric functional $\theta(P)$, proposing that the
``information'' about $\theta$ at $P_0$ should be the least Fisher
information over all one-dimensional subfamilies of distributions that
include $P_0$, leading to the local minimax
risk~\eqref{eqn:private-local-minimax} with $\channeldistset =
\{\textup{id}\}$.  Specializing to the squared error $\losssq$, in the
non-private case, one then defines
\begin{equation}
  \label{eqn:classical-local-minimax}
  \localminimax_n(P_0, \losssq, \mc{P})
  \defeq \sup_{P_1 \in \mc{P}} \inf_{\what{\theta}}
  \max_{P \in \{P_0, P_1\}} \E_P\left[(\what{\theta} - \theta(P))^2\right].
\end{equation}
Then the Hellinger modulus~\eqref{eqn:hellinger-modcont}
typically characterizes the local minimax
risk~\eqref{eqn:classical-local-minimax} to numerical
constants~\cite{DonohoLi91a, CaiLo15}, as the next
proposition shows (we include a proof for
completeness in Appendix~\ref{sec:proof-classical-local-minimax}).
\begin{proposition}
  \label{proposition:classical-local-minimax}
  For each $n \in \N$ and any $P_0 \in \mc{P}$,
  \begin{equation*}
    \frac{\sqrt{2} - 1}{8 \sqrt{2}}
    \helmod^2(n^{-1/2} / 2; P_0, \mc{P})
    \, \le \,
    \localminimax_n(P_0, \losssq, \mc{P})
    \, \le \, \sup_{r \ge 0} \left\{\helmod^2(r; P_0, \mc{P})
    \exp(-n r^2)\right\}.
  \end{equation*}
\end{proposition}
\noindent
Whenever the modulus of continuity behaves nicely,
the upper bound
shows that the lower is tight to within constant factors.
For example, under a polynomial growth assumption that
there exist $B, \beta < \infty$ such that $\helmod(c \delta; P_0, \mc{P})
\le B c^\beta \helmod(\delta; P_0, \mc{P})$ for all $c > 1$, then
\begin{equation}
  \label{eqn:local-minimax-via-hellinger}
  \localminimax_n(P_0, \mc{P})
  \le (B \beta^{\beta/2} e^{-\beta/2}) \cdot
  \helmod^2\left(n^{-1/2} / 2; P_0, \mc{P}\right)
\end{equation}
(cf.\ Appendix~\ref{sec:proof-classical-local-minimax}).
The global modulus of continuity of the parameter $\theta(P)$ with
respect to Hellinger distance also characterizes global minimax error for
estimation of linear functionals on convex spaces of
distributions~\cite{Birge83, DonohoLi87, DonohoLi91a}
and gives lower bounds generically.

These calculations are abstract, so it is instructive to specialize
to more familiar families, where we recover the information
bound~\eqref{eqn:classic-local-minimax}.  Consider a parametric family of
distributions $\mc{P} \defeq \{P_\theta\}_{\theta \in \Theta}$, $\Theta
\subset \R^d$, with dominating measure $\mu$. We
assume $\mc{P}$ is \emph{quadratic mean differentiable} (QMD) at
$\theta$~\cite[Ch.~7.1]{VanDerVaart98}, meaning there exists a score
$\score_\theta : \mc{X} \to \R^d$ such that
\begin{equation}
  \label{eqn:qmd-def}
  \int \Big(\sqrt{p_{\theta + h}} - \sqrt{p_{\theta}} - \half h^T \score_\theta
  \sqrt{p_{\theta}}\Big)^2 d\mu = o(\norm{h}^2)
\end{equation}
as $h \to 0$. Most classical families of distributions
(e.g.\ exponential families) are QMD with the familiar score
$\score_\theta(x) = \nabla_\theta \log p_\theta(x)$
(cf.~\cite{LehmannRo05,VanDerVaart98}). The Fisher
information $\fisher[\theta] = \int \score_\theta \score_\theta^T p_\theta d\mu
\in \R^{d \times d}$ then exists, and we have the asymptotic expansion
\begin{equation}
  \dhel^2(P_{\theta + h}, P_\theta)
  = \frac{1}{8} h^T \fisher[\theta] h + o(\norm{h}^2).
  \label{eqn:qmd-gives-hellinger}
\end{equation}
When the parameter $\theta$ is identifiable, the
local minimax risk~\eqref{eqn:classical-local-minimax} coincides
with the standard Fisher information bounds to within
numerical constants. Indeed, consider the following identifiability
\begin{assumption}
  \label{assumption:identifiability}
  For $\delta > 0$, there exists $\gamma > 0$ such that
  $\norm{\theta - \theta_0} > \delta$ implies
  $\dhel^2(P_\theta, P_{\theta_0}) > \gamma$.
\end{assumption}
\noindent
We can then make the
approximation~\eqref{eqn:classic-local-minimax} for
estimating $v^T\theta_0$ rigorous
(see Appendix~\ref{sec:proof-qmd-bounded-local-minimax}):
\begin{claim}
  \label{claim:qmd-bounded-local-minimax}
  Let $\mc{P} = \{P_\theta\}_{\theta \in \Theta}$ be quadratic mean
  differentiable at $\theta_0$ with positive definite Fisher information
  $\fisher[\theta_0]$, assume that $\Theta$ is bounded, and that $\theta_0$
  is identifiable (\ref{assumption:identifiability}).  Then for large
  $n \in \N$,
  \begin{equation*}
    \frac{1}{21}
    \cdot \frac{1}{n} v^T \fisher[\theta_0]^{-1} v
    \le \localminimax_n(P_{\theta_0}, \losssqv, \mc{P})
    \le \frac{9}{e} \cdot \frac{1}{n} v^T \fisher[\theta_0]^{-1} v.
  \end{equation*}
\end{claim}
\noindent
We cannot expect to achieve the correct numerical constants with the
two-point lower bounds in the local minimax risk~\cite{CasellaSt81}, but
Claim~\ref{claim:qmd-bounded-local-minimax} recovers the correct
scaling in problem parameters.

\subsection{Measuring attainment of the local minimax risk}
\label{sec:attainment-sortof}

As we note in the introduction, we would like a procedure that uniformly
achieves the local minimax benchmark, that is, for a given loss $L$,
returning to the more general notation~\eqref{eqn:private-local-minimax}, we
would like
\begin{equation*}
  \sup_{Q \circ P_0 \in \mc{P}}
  \frac{\E_{P_0}[\loss(\what{\theta}_n - \theta(P_0))]}{\localminimax_n(P_0,
    L, \mc{P}, \channeldistset)}
  \lesssim 1.
\end{equation*}
Achieving this generally is challenging (and for many families $\mc{P}$,
impossible~\cite{BickelKlRiWe98}); indeed, a major contribution of
\citet{CaiLo15} is to show that it \emph{is} possible to achieve this
uniform benchmark for the squared error and various functionals in
convexity-constrained nonparametric regression.

As a consequence, we often consider a weakening to achieve the local minimax
risk (to within numerical constants). We describe this precisely at the end
of this section, first reviewing some of the necessary parametric and
semi-parametric theory~\cite{VanDerVaart98, BickelKlRiWe98}. In parametric
cases, $\mc{P} = \{P_\theta\}_{\theta \in \Theta}$, we consider
sequences of scaled losses, taking the form
$\loss_n(\what{\theta}_n - \theta(P_0)) = \loss(\sqrt{n}(\what{\theta}_n -
\theta(P_0)))$.
An estimator $\what{\theta}_n$ is \emph{asymptotically local minimax rate
  optimal} if
\begin{equation}
  \label{eqn:asymptotic-local-benchmark}
  \sup_c \limsup_{n \to \infty}
  \sup_{\norm{h} \le c / \sqrt{n}}
  \frac{\E_{P_{\theta_0 + h}}[\loss(\sqrt{n}(\what{\theta}_n -
      (\theta_0 + h)))]}{
    \localminimax_n(P_{\theta_0}, \loss_n, \mc{P}, \{\textup{id}\})}
  \lesssim 1
\end{equation}
for all $\theta_0 \in \interior \Theta$.  Le Cam's local asymptotic
normality theory of course allows much more, even achieving correct
constants~\cite{LeCam86, LeCamYa00, Pollard97, VanDerVaart98}.  We
emphasize that while many of our ideas build out of semiparametric
efficiency, typically we only achieve optimality to within numerical
constants.


We will generally demonstrate procedures that achieve the (private) local
minimax risk in some locally uniform sense, and with this in mind, we review
a few necessary concepts in semi-parametric estimation on regularity,
sub-models, and tangent spaces~\cite[cf.][Chapters~8.5 \&
  25.3]{VanDerVaart98} that will be important for developing our
asymptotics.  Let $\mc{P}$ be a collection of distributions, and for some
$P_0 \in \mc{P}$ let $\Pmodel \defeq \{P_h\}_{h \in \R^d} \subset \mc{P}$ be
a sub-model within $\mc{P}$ indexed by $h \in \R^d$, where we assume that
$\Pmodel$ is quadratic mean differentiable (QMD)~\eqref{eqn:qmd-def} at
$P_0$ for a score function $g : \mc{X} \to \R^d$ (usually this score will
simply be $g(x) = \left.\nabla_h \log dP_h(x) \right|_{h =
  0}$)~\cite[Ch.~25.3]{VanDerVaart98}, that is,
\begin{equation}
  \label{eqn:l2-diff}
  \int \left|dP_h^{1/2} - dP_0^{1/2} - \half h^T g dP_0^{1/2}\right|^2 =
  o(\norm{h}^2)
\end{equation}
as $h \to 0$. Considering different QMD sub-models
$h \mapsto P_h$ around $P_0$ yields the tangent set $\tangentset$, which is
a collection of mean-zero score functions $g : \mc{X} \to \R^d$ with $g \in
\ltwoPz$.  Then a parameter $\theta : \mc{P} \to \R^k$ is
\emph{differentiable relative to $\tangentset$} if there exists a mean-zero
\emph{influence function} $\influencefunc : \mc{X} \to \R^k$,
where for each submodel $\Pmodel = \{P_h\}_{h \in \R^d}$ and associated
score $g : \mc{X} \to \R^d$,\footnote{Recalling
  \cite[Ch.~25.3]{VanDerVaart98} and
  the Riesz representation theorem, the existence of this influence function
  is equivalent to the exists of a continuous linear map $\varphi : \ltwoPz \to
  \R^k$ such that $\theta(P_h) - \theta(P_0) = \varphi(h^T g) + o(\norm{h})$.}
\begin{equation}
  \theta(P_h) = \theta(P_0)
  + \int \influencefunc(x) \<g(x), h\> dP_0(x) + o(\norm{h}).
  \label{eqn:differentiable-functional}
\end{equation}

We turn now away from properties of the parameter $\theta$
to properties of estimators that will be useful.
An estimator $\what{\theta}_n$ is \emph{regular for $\theta$}
at $P_0$ if for all $h$ and sequences $h_n \to h \in \R^d$,
\begin{equation*}
  \sqrt{n}(\what{\theta}_n - \theta(P_{h_n/\sqrt{n}}))
  \mathop{\cdlong}_{P^n_{h_n/\sqrt{n}}} Z
\end{equation*}
for a random variable $Z$ (which is identical for each $h$); such
estimators are classically central~\cite{VanDerVaart98}.
In our constructions, the (private) estimators $\what{\theta}_n$ depend both
on the variables $X_i$ and, as we construct $Z_i \sim \channel(\cdot \mid
X_i, Z_{1:i-1})$, we can assume w.l.o.g.\ that there is an independent
sequence of auxiliary random variables $\auxvar_i \simiid \auxdist$ such
that $\what{\theta}_n = \what{\theta}_n(X_{1:n}, \auxvar_{1:n})$.  Then
under the sampling distribution $P_0 \times \auxdist$, we shall often
establish the asymptotic linearity of $\what{\theta}_n$
at $P_0 \times \auxdist$, meaning
\begin{equation}
  \label{eqn:asymptotic-linearity}
  \sqrt{n}(\what{\theta}_n - \theta(P_0))
  = \frac{1}{\sqrt{n}} \sum_{i = 1}^n \influencefunc(X_i)
  + \frac{1}{\sqrt{n}} \sum_{i = 1}^n \phi_\aux(\auxvar_i)
  + o_{P_0}(1),
\end{equation}
where $\E[\influencefunc(X)] =
\E[\phi_\aux(\auxvar)] = 0$, and $\cov(\influencefunc) = \Sigma_0$ and
$\cov(\phi_\aux) = \Sigma_\aux$.  Such expansions, with $\phi_\aux \equiv
0$, frequently occur in classical parametric, semi-parametric, and
nonparametric
statistics~\cite[cf.][Chs.~8 \& 25]{VanDerVaart98}.  For example, in
parametric cases with $\mc{P} = \{P_\theta\}_{\theta \in \Theta}$, standard
score $\score_\theta = \nabla_\theta \log p_\theta$, and Fisher information
$\fisher[\theta] = \E_\theta[\score_\theta \score_\theta^T]$, if
$\what{\theta}_n$ is the MLE (without privacy), then $\phi_\aux \equiv 0$
and $\influencefunc(x) =
-\fisher[\theta_0]^{-1}\score_{\theta_0}(x)$.
We have the following regularity
result, which essentially appears as~\cite[Lemmas 8.14 \&
  25.23]{VanDerVaart98}, though we include a proof in
Appendix~\ref{sec:proof-regular-differentiability}.
\begin{lemma}
  \label{lemma:regular-differentiability}
  Let $\Pmodel = \{P_h\}_{h \in \R^d} \subset \mc{P}$ be a
  QMD~\eqref{eqn:qmd-def} sub-model at $P_0$ with score $g$, and
  assume that $\theta : \mc{P} \to \R^k$ is
  differentiable~\eqref{eqn:differentiable-functional} relative to
  $\tangentset$ at $P_0$. Let $\what{\theta}_n$ be asymptotically
  linear~\eqref{eqn:asymptotic-linearity} at $P_0 \times \auxdist$.  Then
  for any sequence $h_n \to h \in \R^d$,
  \begin{equation*}
    \sqrt{n}\left(\what{\theta}_n - \theta(P_{h_n/\sqrt{n}})\right)
    \mathop{\cdlong}_{P_{h_n/\sqrt{n}} \times \auxdist}
    \normal\left(0,
    \Sigma_0 + \Sigma_\aux\right).
  \end{equation*}
  Additionally, for any bounded continuous $\loss : \R^k \to \R_+$
  and any $c < \infty$,
  \begin{equation*}
    \lim_{n \to \infty} \sup_{\norm{h} \le c}
    \E_{P_{h/\sqrt{n}}}\left[\loss(\sqrt{n}(\what{\theta}_n - \theta(P_{h/\sqrt{n}})))
      \right]
    = \E[\loss(Z)]
    ~~~ \mbox{where} ~~~
    Z \sim \normal(0, \Sigma_0 + \Sigma_\aux).
  \end{equation*}
\end{lemma}

We use Lemma~\ref{lemma:regular-differentiability} to describe the local
uniform convergence we seek.
Define the rescaled losses $\loss_n(t) = \loss(\sqrt{n} \cdot t)$.
We say an estimator
$\what{\theta}_n$ and channel $Q \in \channeldistset$ are
\emph{local minimax rate optimal}
if for all $P_0 \in \mc{P}$ with
QMD submodel $\Pmodel = \{P_h\} \subset \mc{P}$ passing through $P_0$ with
score function $g$,
\begin{equation}
  \label{eqn:asymptotic-local-uniform}
  \sup_{c < \infty} \limsup_{n \to \infty}
  \sup_{\norm{h} \le c / \sqrt{n}}
  \frac{\E_{Q \circ P_h}[\loss(\sqrt{n}(\what{\theta}_n(Z_1, \ldots, Z_n)
      - \theta(P_h)))]}{
    \localminimax_n(P_0, \loss_n, \mc{P}, \channeldistset)}
  \le C,
\end{equation}
where the constant $C$ is a numerical constant independent of
$\loss$ and $P_0$.
Our general recipe is now
apparent: demonstrate an asymptotically
linear~\eqref{eqn:asymptotic-local-uniform} locally private estimator
$\what{\theta}_n$
with covariance $\Sigma_0 + \Sigma_\aux$.
Then for \emph{any} collection of losses $\{\loss\}$ for which
we can lower bound $\localminimax_n(P_0, \loss_n,
\Pfamily, \channeldistset) \gtrsim \E[\loss(Z)]$ when $Z \sim \normal(0,
\Sigma_0 + \Sigma_\aux)$, we obtain the
convergence~\eqref{eqn:asymptotic-local-uniform}.

\newcommand{\grad}{\nabla}
\newcommand{\res}{R}
\newcommand{\modltwolin}{\modltwo^{\rm lin}}
\newcommand{\lambdamin}{\lambda_{\min}}
\newcommand{\lambdamax}{\lambda_{\max}}
\newcommand{\Creverse}{\gamma} 
\newcommand{\Cgrow}{\beta} 

\section{Local minimax complexity and private estimation}
\label{sec:local-minimax}

We turn to our main goal of establishing localized minimax complexities for
locally private estimation.
We focus first on the squared error for simplicity
in Section~\ref{sec:squared-error}, giving consequences of our
results.
Instead of the Hellinger modulus~\eqref{eqn:hellinger-modcont},
we show upper and lower bounds on the
local minimax minimax complexity for private estimation
using a local total variation modulus.
We then give several example calculations, and provide a super-efficiency
result. In Sections~\ref{sec:full-private-lower}
and~\ref{sec:super-efficiency}, we generalize to show how a total variation
modulus characterizes local minimax complexity for nearly arbitrary
losses, making our initial results on squared error corollaries.



\subsection{Local minimax squared error and the variation distance modulus}
\label{sec:squared-error}



We begin with a somewhat simplified setting,
where we wish to estimate a parameter $\theta(P) \in \R$ of
a distribution $P \in \mc{P}$, a collection of possible distributions, and
we measure performance of an estimand $\theta$ via the
squared error $\losssq(\theta, P) = (\theta - \theta(P))^2$.
For a family of distributions $\mc{P}$,
the \emph{modulus of continuity} with respect to the variation
distance at distribution $P_0$ is
\begin{equation}
  \label{eqn:private-moc}
  \tvmod(\delta; P_0, \mc{P})
  \defeq \sup_{P \in \mc{P}} \left\{ |\theta(P) - \theta(P_0)|
  ~\mbox{s.t.}~ \tvnorm{P - P_0} \le \delta \right\}.
\end{equation}
As we shall see, this modulus of continuity fairly precisely characterizes
the difficulty of locally private estimation of functionals.  The key
is that the modulus is with respect to
\emph{variation distance}.  This is in contrast to the classical results we
review in the introduction and Section~\ref{sec:classical-local-minimax} on
optimal estimation, where the more familiar modulus of continuity with
respect to Hellinger distance characterizes problem difficulty.  As we
illustrate, the difference between the Hellinger~\eqref{eqn:hellinger-modcont}
and
variation~\eqref{eqn:private-moc} moduli
leads to different behavior
for private and non-private estimation problems.

With this, we come to a corollary of our
Theorem~\ref{theorem:modulus-of-continuity}, to come in
Section~\ref{sec:full-private-lower}:
\begin{corollary}
  \label{corollary:modulus-squared-error}
  Let $\channeldistset_\diffp$ be the collection
  of $(2, \diffp^2)$-locally R\'{e}nyi private channels
  (Definition~\ref{definition:local-renyi}).
  Then
  \begin{equation*}
    \localminimax_n(P_0, \Lsq, \mc{P}, \channeldistset_\diffp)
    \ge \frac{1}{16} \tvmod^2\left(\frac{1}{2 \sqrt{2 n \diffp^2}};
    P_0, \mc{P}\right).
  \end{equation*}
\end{corollary}
\noindent
An identical bound (to within numerical constants) holds for
$\diffp$-locally differentially private channels, as (recall
Section~\ref{sec:definition-of-privacy}) any $\diffp$-differentially private
channel is $(2, O(1) \diffp^2)$-R\'{e}nyi private.  In (nearly) simultaneous
independent work to the original version of this paper on the
\texttt{arXiv}, \citet{RohdeSt18} provide a
global~\eqref{eqn:private-minimax} minimax lower bound via a global modulus
of continuity with respect to variation distance,
extending~\cite{DonohoLi87,DonohoLi91a,DonohoLi91b} to the private case.
The main difference is our focus: while they, similar
to~\cite{DuchiJoWa18}, demonstrate that private minimax rates  depart
from
non-private ones, our focus is on instance-specific bounds. Consequently,
\citeauthor{RohdeSt18} study linear functionals $\theta(P)$,
designing estimators to achieve the global minimax risk, while we allow
nonlinear functionals and develop estimators that must achieve the refined
local minimax complexity, with the hope that we may calculate practically
useful quantities akin to classical information
bounds~\cite{VanDerVaart98,LeCamYa00}.  (As an aside, we also provide lower
bounds for weaker forms of privacy.)

We can provide a converse to Corollary~\ref{corollary:modulus-squared-error}
that (nearly) characterizes the local minimax error by
the modulus of continuity. Indeed,
Proposition~\ref{proposition:achievable} to come implies
that for $\diffp \le \frac{3}{2}$, we have
\begin{corollary}
  \label{corollary:modulus-upper}
  Let $\channeldistset_\diffp$ be all non-interactive
  $\diffp$-differentially private channels
  (Def.~\ref{definition:local-dp}). Then
  \begin{equation*}
    \localminimax_n(P_0, \Lsq, \mc{P}, \channeldistset_\diffp)
    \le 2 \sup_{\tau \ge 0}
    \tvmod^2\left(\frac{5 \sqrt{2} \tau}{\sqrt{n \diffp^2}}; P_0, \mc{P}\right)
    e^{-\tau^2}.
  \end{equation*}
\end{corollary}
\noindent
Exactly as in inequality~\eqref{eqn:local-minimax-via-hellinger}, whenever
the modulus $\tvmod$ grows at most polynomially---so that
there exist $B, \beta < \infty$ such that
$\tvmod(c \delta; P_0, \mc{P}) \le B c^\beta \tvmod(\delta; P_0, \mc{P})$
for $c > 1$, we have
\begin{equation*}
  \localminimax_n(P_0, \Lsq, \mc{P}, \channeldistset)
  \le C_{B,\beta}
  \tvmod^2\left(\frac{C_1}{\sqrt{n \diffp^2}}; P_0, \mc{P}\right)
\end{equation*}
where $C_1$ is a numerical constant and $C_{B,\beta}$ depends on
$B, \beta$ only. We note that we have thus far characterized the
local minimax benchmark but have provided
no estimator uniformly
achieving it.

\subsubsection{Example moduli of continuity}

It is instructive to give examples of the local modulus and
connect them to estimation rates. We give
three mean estimation examples---a fully nonparametric
setting, a collection of distributions $\mc{P}$ with bounded
variance, and a Bernoulli estimation problem---where we see
that the variation modulus~\eqref{eqn:private-moc}
is essentially
independent of the distribution $P_0$, in distinction with the Hellinger
modulus~\eqref{eqn:hellinger-modcont}.
After these, additional examples will highlight that this is
not always the case.

\begin{example}[Bounded mean estimation]
  \label{example:mean-estimation}
  Let $\mc{X} \subset \R$ be a bounded set and
  $\mc{P} \defeq \{P : \supp P \subset \mc{X}\}$ be the collection
  of distributions supported on $\mc{X}$.
  Using the shorthand $\theta_0 = \theta(P_0) = \E_{P_0}[X]$,
  we claim the following upper and lower 
  bounds:
  \begin{equation}
    \label{eqn:nonparametric-mean-modulus}
    \delta \cdot \sup_{x\in \xdomain} |x - \theta_0| \leq 
    \tvmod(\delta; P_0, \mc{P})
    \leq 2 \delta \cdot \sup_{x\in \xdomain} |x - \theta_0|,
  \end{equation}
  so that the local modulus is nearly independent of $P_0$.  To see the
  lower bound~\eqref{eqn:nonparametric-mean-modulus}, for any $x\in
  \xdomain$, define $P_x = (1-\delta)P_0 + \delta \cdot \ones_x$, where
  $\ones_x$ denotes a point mass at $x$. Then $\tvnorm{P_x - P_0} \leq
  \delta$, so $\tvmod(\delta) \geq \sup_{x \in
    \mc{X}} |\theta_0 - \theta(P_x)| = \delta \cdot \sup_{x\in \xdomain} |x
  - \theta_0|$. The upper bound~\eqref{eqn:nonparametric-mean-modulus} is
  straightforward:
  \begin{align*}
    |\theta(P) - \theta_0|
    & = \bigg|\int (x - \theta_0) (dP(x) - dP_0(x)) \bigg|
    \le 2 \sup_{x \in \xdomain} |x - \theta_0| \tvnorm{P - P_0}
  \end{align*}
  for all $P \in \mc{P}$, by the triangle inequality, which is our desired
  result.

  On the other hand, the Hellinger modulus~\eqref{eqn:hellinger-modcont}
  (asymptotically) smaller. Let $\mc{P}$ be any collection of distributions
  with uniformly bounded fourth moment. We claim (see
  Appendix~\ref{sec:proof-hellinger-mean-modulus} for proof) that there
  exist numerical constants $0 < c_0 \le c_1 < \infty$ such that for all
  small enough $\delta > 0$,
  \begin{equation}
    \label{eqn:hellinger-mean-modulus}
    c_0 \sqrt{\var_{P_0}(X)} \cdot \delta
    \le \helmod(\delta; P_0, \mc{P}) \le c_1 \sqrt{\var_{P_0}(X)} \cdot \delta
    ~~ \mbox{and} ~~
    \lim_{\delta \downarrow 0}
    \frac{\helmod(\delta; P_0, \mc{P})}{\sqrt{8 \var_{P_0}(X)} \delta}
    = 1.
  \end{equation}
  The variance
  $\var_{P_0}(X)$ of the distribution $P_0$
  thus determines the
  local Hellinger modulus~\eqref{eqn:hellinger-modcont}.
\end{example}

\begin{example}[Means with
    bounded variance]
  \label{example:bounded-variance-means}
  We specialize
  Example~\ref{example:mean-estimation} by considering distributions
  on $\mc{X}$ with a variance bound $\sigma^2$, defining
  $\mc{P} \defeq \{P : \supp P \subset \mc{X}, \var_P(X) \le \sigma^2\}$.
  We consider the case that $\var_{P_0}(X) < \sigma^2$;
  we claim that the bounds~\eqref{eqn:nonparametric-mean-modulus}
  again hold for small $\delta > 0$. The upper bound is
  immediate.  The lower bound follows by noting that if $P_x = (1 - \delta)
  P_0 + \delta \cdot \ones_x$, then
  $\var_{P_x}(X) =
  \var_{P_x}(X - \theta_0)
  = (1 - \delta) \var_{P_0}(X) + \delta(1 - \delta)
  (x - \theta_0)^2$,
  so that for small enough $\delta$ we have $\var_{P_x}(X) \le \sigma^2$ and
  the identical lower bound~\eqref{eqn:nonparametric-mean-modulus} holds.
\end{example}

\begin{example}[Modulus of continuity for Bernoulli parameters]
  \label{example:modulus-bernoulli}
  We further restrict to binary random variables, so that the problem
  is parametric. Let $\bernoulli(\theta)$ be the Bernoulli distribution
  with mean $\theta$, and $\mc{P} = \{\bernoulli(\theta)\}_{\theta \in [0,1]}$.
  We have
  $\tvnorms{P_{\theta_0} - P_\theta} = |\theta - \theta_0|$ and
  for $\delta \le \half$,
  $\tvmod(\delta; P_{\theta_0}, \mc{P}) = \delta$.
  On the other hand, 
  Eq.~\eqref{eqn:hellinger-mean-modulus} shows that
  $\helmod^2(\delta; P_{\theta_0}, \mc{P})
  =
  8 \frac{\delta^2}{\theta_0 (1 - \theta_0)}(1 + o(1))$. The Hellinger modulus
  is local to $\theta_0$, while the local variation modulus is global.
\end{example}

Summarizing
examples~\ref{example:mean-estimation}--\ref{example:modulus-bernoulli}, in
each case the local TV-modulus~\eqref{eqn:private-moc} of distributions
supported on $\mc{X}$ \emph{must} scale as the diameter of
$\mc{X}$---essentially identical to a global modulus of continuity over the
full set $\mc{P} = \{P : \supp P \subset \mc{X}\}$---while the
Hellinger modulus~\eqref{eqn:hellinger-modcont} scales linearly in
$\sqrt{\var_{P_0}(X)}$. This lack of locality
in the local modulus for variation distance has consequences for
estimation, which we can detail by applying
Corollary~\ref{corollary:modulus-squared-error}:
\begin{corollary}[Locally private mean estimation]
  \label{corollary:mean-estimation}
  Let $\mc{X}$ be bounded, let $\mc{P}$ be any of the collections of
  distributions in
  Examples~\ref{example:mean-estimation}--\ref{example:modulus-bernoulli},
  and let $\channeldistset_\diffp$ be the collection of $(2, \diffp^2)$-R\'{e}nyi
  locally private channels. There exists a numerical constant $c > 0$ such
  that for any $P_0 \in \mc{P}$ (where in the case
  of Example~\ref{example:bounded-variance-means} we require $\var_{P_0}(X)
  < \sigma^2$), for all large enough $n$
  \begin{equation*}
    c \frac{\diam(\mc{X})^2}{n \diffp^2}
    + c \frac{\var_0(X)}{n}
    \le \minimax_n(P_0, \Lsq, \mc{P}, \channeldistset)
    \le \frac{\diam(\mc{X})^2}{2n \diffp^2} + \frac{\var(X)}{n}.
  \end{equation*}
\end{corollary}
\noindent
Standard mechanisms~\cite{DuchiJoWa18} achieve the upper
bound in Corollary~\ref{corollary:mean-estimation}: letting $Z_i = X_i +
\frac{\diam(\mc{X})}{\diffp} W_i$ for $W_i \simiid \laplace(1)$ gives an
$\diffp$-differentially private view of $X_i$; define the estimator
$\what{\theta}_n = \frac{1}{n} \sum_{i = 1}^n Z_i$.
This highlights the difference with the
non-private case, where the matching upper and lower bounds
are $\var_{P_0(X)} / n$, while in the private case
the diameter of $\mc{X}$ is central.

Yet the local total-variation (private) modulus can
depend strongly on the distribution $P_0$ and
set $\mc{P}$ of potential alternatives, a point to which
we will return later. Two simple examples illustrate.

\begin{example}[Modulus of continuity for a normal mean]
  \label{example:normal-modulus}
  Let $\mc{P} = \{\normal(\theta, \sigma^2)\}_{\theta \in \R}$ for
  a known variance $\sigma^2$. Letting $\phi$ and $\Phi$ be the standard
  normal p.d.f.\ and c.d.f., respectively, for any
  pair $\theta_0, \theta_1 \in \R$ with $\Delta = |\theta_0 - \theta_1|$,
  we then have
  $\tvnorms{\normal(\theta_0, \sigma^2) - \normal(\theta_1, \sigma^2)}
  = \Phi(\Delta / 2 \sigma) - \Phi(-\Delta / 2 \sigma)$. Solving for
  the modulus gives that for any $P_0 \in \mc{P}$,
  \begin{equation*}
    \tvmod(\delta; P_0, \mc{P})
    = \frac{\sigma \delta}{\phi(0)}(1 + O_\sigma(\delta))
  \end{equation*}
  as $\delta \to 0$. It is possible but tedious to extend this
  to cases with an unknown variance, so that
  $\mc{P} = \{\normal(\theta, \sigma^2)\}_{\theta \in \R, \sigma^2
    < \infty}$, in which case
  we obtain
  $\tvmod(\delta; P_0, \mc{P}) \asymp \sqrt{\var_{P_0}(X)} \delta$ as
  $\delta \to 0$.
\end{example}

Other parametric families also have stronger dependence on the
local distribution $P$.

\begin{example}[Exponential distributions]
  \label{example:exponential-modulus}
  Let $p_\theta(x) = \frac{1}{\theta} \exp(-\frac{x}{\theta}) \indic{x \ge
    0}$ be the density of an exponential distribution with scale $\theta$,
  and $\mc{P}$ be the collection of such distributions.
  Let $\tau, \theta > 0$, and set $x\subopt = \frac{\theta \tau}{\theta - \tau}
  \log \frac{\theta}{\tau}$.
  The variation distance between two exponential distributions is then
  $\tvnorm{P_\theta - P_\tau} = |e^{-x\subopt / \theta} - e^{-x\subopt / \tau}|$.
  For $\theta = \tau + \delta$ (or $\tau = \theta - \delta$), we thus obtain
  that
  \begin{equation*}
    \tvnorm{P_\theta - P_{\theta - \delta}}
    = \exp\left(-\frac{\theta}{\delta} \log \frac{1}{1 - \delta / \theta}
    \right) \left|\frac{1}{1 - \delta/\theta} - 1 \right|
    = e^{-1} \left[\frac{|\delta|}{\theta} + O_\theta(\delta)\right],
  \end{equation*}
  and $\tvnorm{P_\theta - P_{\theta - \delta}}$ is monotonic in $|\delta|$.
  Eliding details, we thus find that
  \begin{equation*}
    \tvmod(\delta; P_\theta, \mc{P})
    = \theta \delta \cdot (e + O_\theta(\delta)),
  \end{equation*}
  which evidently is local to $\theta$.
\end{example}

\noindent

\subsubsection{Super-efficiency for squared error}
\label{sec:super-efficiency-squared-error}

To demonstrate that the local modulus of continuity is the
``correct'' lower bound on estimation, we consider the third of the
desiderata for a strong lower bound that we idenfity in the introduction: a
super-efficiency result~\cite{BrownLo96,CaiLo15,Tsybakov98} showing that any
estimator substantially outperforming the local minimax benchmark at a given
distribution $P_0$ necessarily suffers higher expected error for some other
distribution $P_1$.  As a corollary of
Proposition~\ref{proposition:super-efficiency} to come, we establish the
following result.
\begin{corollary}
  \label{corollary:squared-super-efficiency}
  Let $\channel$ be a sequentially
  interactive $(2, \diffp^2)$-R\'{e}nyi-private
  channel (Def.~\ref{definition:local-renyi}).
  If for some $\eta \in [0, 1]$ the
  estimator $\what{\theta}$ satisfies
  \begin{equation*}
    \E_{\channel \circ P_0} [(\what{\theta}(Z_{1:n}) - \theta_0)^2]
    \le \eta \tvmod^2\left(\frac{1}{\sqrt{4 n \diffp^2}}; P_0, \mc{P} \right),
  \end{equation*}
  then for all $t \in [0, 1]$
  there exists a distribution $P_1 \in \mc{P}$ such that
  \begin{equation*}
    \E_{\channel \circ P_1}[(\what{\theta}(Z_{1:n}) - \theta(P_1))^2]
    \ge \frac{1}{8}
    \hinge{1 - \eta^\frac{ (1 - t)}{2}}^2
    \tvmod^2 \Bigg(\frac{1}{4} \sqrt{\frac{t  \log \frac{1}{\eta}}{n \diffp^2}};
    P_1, \mc{P}\Bigg).
  \end{equation*}
\end{corollary}

Unpacking the corollary by ignoring constants (e.g., set $t = \half$),
we see (roughly) the following result: if an estimator achieves expected
squared error less (by a factor $\eta < 1$) than the squared modulus of
continuity at $P_0$, it must have squared error scaling
with the modulus for a radius $\sqrt{\log \frac{1}{\eta}}$-times larger.
For example, considering the sample mean
examples~\ref{example:mean-estimation}--\ref{example:modulus-bernoulli},
we see that in any of the settings, there exists a numerical constant
$c > 0$ such that if $\what{\theta}_n$ is locally private and
\begin{equation*}
  \E_{P_0}\left[(\what{\theta}_n - \theta(P_0))^2\right]
  \le \eta \frac{\diam(\mc{X})^2}{n \diffp^2}
\end{equation*}
for some $0 < \eta < 1$,
then there exists $P_1 \in \mc{P}$ such that for all large enough $n$,
\begin{equation*}
  \E_{P_1}\left[(\what{\theta}_n - \theta(P_1))^2\right]
  \ge c  \frac{\diam(\mc{X})^2}{n \diffp^2} \cdot
  \log \frac{1}{\eta}.
\end{equation*}

\subsection{Local private minimax risk for general losses}
\label{sec:full-private-lower}

We return to prove our local minimax upper and lower bounds for general
losses, along the way proving the claimed corollaries.
Recall that we use any symmetric quasiconvex loss $\loss : \R^d \to \R_+$
satisfying $\loss(\zeros) = 0$.
Then
for a family of distributions $\mc{P}$,
the \emph{modulus of continuity}
associated with the loss $L$ at the distribution $P_0$ is
\begin{equation}
  \label{eqn:private-moc-loss}
  \lossmodcont(\delta; P_0, \mc{P})
  \defeq \sup_{P \in \mc{P}} \left\{ \loss\left(\frac{\theta(P_0) - \theta(P)}{2}
  \right)
  ~\mbox{s.t.}~ \tvnorm{P - P_0} \le \delta \right\},
\end{equation}
where the normalization by $\half$ is convenient for our proofs.
We then have our first main theorem, which lower bounds the local minimax
risk using the modulus~\eqref{eqn:private-moc-loss} in analogy to
Proposition~\ref{proposition:classical-local-minimax}.  We defer the proof
to Section~\ref{sec:proof-modulus-of-continuity}, where we also present a
number of new strong data-processing inequalities to prove it.
\begin{theorem}
  \label{theorem:modulus-of-continuity}
  Let $\channeldistset$ be the collection of $(2, \diffp^2)$-locally
  R\'{e}nyi differentially private channels
  (Definition~\ref{definition:local-renyi}).
  Let $c_{\textup{conv}} = 1$ if $\loss$ is convex and $2$ otherwise.
  Then for any distribution
  $P_0$, we have
  \begin{equation*}
    \localminimax_n(P_0, L, \mc{P}, \channeldistset)
    \ge
    \frac{1}{4 c_{\textup{conv}}}
    \lossmodcont\left(\frac{1}{2 \sqrt{2 n \diffp^2}};
    P_0, \mc{P}\right).
  \end{equation*}
\end{theorem}
\noindent
Corollary~\ref{corollary:modulus-squared-error} is then immediate: for the
squared error, $\loss(\half(\theta(P_0) - \theta(P_1))) =
\frac{1}{4}(\theta(P_0) - \theta(P_1))^2$.  Note also, as in the
discussion after Corollary~\ref{corollary:modulus-squared-error}, that this
implies the lower bound $\lossmodcont(O(1) / \sqrt{n \diffp^2}; P_0,
\mc{P})$ on any $\diffp$-locally differentially private procedure.

An upper bound in the theorem is a somewhat more delicate argument, and for
now we do not provide procedures achieving the lower bound.  Instead, under
reasonable conditions on the loss, we can show the (partial) converse that
the modulus $\lossmodcont$ describes the local minimax complexity.


\begin{condition}[Growth inequality]
  \label{cond:reverse-triangle}
  There exists $\Creverse < \infty$ such that for all $t \in \R^d$,
  \begin{equation*}
    \loss(t)
    \le \Creverse \loss(t / 2).
  \end{equation*}
\end{condition}
\noindent
For example, for the squared error we have
$\losssq(t / 2) = t^2 / 4 = \losssq(t) / 4$, giving $\Creverse = 4$.
In
Appendix~\ref{sec:proof-achievable}, we prove the following
partial converse to Theorem~\ref{theorem:modulus-of-continuity}.
\begin{proposition}
  \label{proposition:achievable}
  Let Condition~\ref{cond:reverse-triangle} 
  on the loss $L$ hold.
  Let $\diffp \ge 0$ and
  $\delta_\diffp = \frac{e^\diffp}{e^\diffp + 1} - \half$,
  and let $\channeldistset$ be the collection
  of non-interactive $\diffp$-differentially private
  channels (Definition~\ref{definition:local-dp}).
  Then
  \begin{equation*}
    \localminimax_n(P_0, L, \mc{P}, \channeldistset)
    \le 2 \Creverse \sup_{\tau \ge 0}
    \left\{\lossmodcont\left(\frac{\sqrt{2} \tau}{\delta_\diffp \sqrt{n}};
    P_0, \mc{P}\right) e^{-\tau^2} \right\}.
  \end{equation*}
\end{proposition}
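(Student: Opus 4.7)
The plan is to reduce the local minimax upper bound to a two-point problem for each candidate alternative $P_1$ and then optimize over the total variation distance $\tau = \tvnorm{P_0 - P_1}$. For a fixed $P_1 \in \mc{P}$, choose a Neyman--Pearson set $A \subseteq \mc{X}$ satisfying $P_1(A) - P_0(A) = \tau$ (for instance, $A = \{x : dP_1/d(P_0 + P_1) > dP_0/d(P_0+P_1)\}$); the binary statistic $\psi = \one_A$ is a sufficient summary for discriminating $P_0$ from $P_1$ in total variation. The channel is Warner-style randomized response on $\psi$: independently across $i$, draw $Z_i = \psi(X_i)$ with probability $e^\diffp/(e^\diffp+1)$ and $Z_i = 1 - \psi(X_i)$ otherwise. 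This is non-interactive and $\diffp$-locally differentially private, and a direct calculation gives $\mu_a \defeq \E_{P_a}[Z_i] = (e^\diffp+1)^{-1} + \frac{e^\diffp-1}{e^\diffp+1}\, P_a(A)$, so that $\mu_1 - \mu_0 = 2\delta_\diffp \tau$.

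The estimator is the threshold rule $\what{\theta} = \theta(P_1)$ if $\bar Z_n \defeq n^{-1}\sum_i Z_i > (\mu_0 + \mu_1)/2$ and $\what{\theta} = \theta(P_0)$ otherwise. Since the $Z_i$ are i.i.d.\ Bernoulli under each $P_a$, Hoeffding's inequality bounds the misclassification probability by $\exp(-c\, n \delta_\diffp^2 \tau^2)$ for a universal constant $c$. Because $\what{\theta}$ takes only the two values $\theta(P_0),\theta(P_1)$ and $L(\theta(P_a), P_a) = 0$, the risk under $P_a$ equals $L(\theta(P_{1-a}), P_a)$ times this error probability; Condition~\ref{cond:reverse-triangle} gives $L(\theta(P_{1-a}), P_a) \le \Creverse\, \lossdist(P_0, P_1) \le \Creverse\, \modcont(\tau; P_0, \mc{P})$, yielding
\begin{equation*}
\max_{a \in \{0,1\}} \E\bigl[L(\what{\theta}, P_a)\bigr] \;\le\; \Creverse\, \modcont(\tau; P_0, \mc{P}) \exp(-c\, n \delta_\diffp^2 \tau^2).
\end{equation*}

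To finish, optimize the above bound over $\tau$ with $\delta_0 \defeq \sqrt{2}/(\delta_\diffp \sqrt{n})$ as the calibration radius. For $\tau \le \delta_0$, monotonicity of $\modcont$ gives the trivial bound $\modcont(\tau; P_0, \mc{P}) \le \modcont(\delta_0; P_0, \mc{P})$; for $\tau > \delta_0$, Condition~\ref{cond:polynomial-growth} yields $\modcont(\tau; P_0, \mc{P}) \le (\Cgrow\, \tau/\delta_0)^\alpha\, \modcont(\delta_0; P_0, \mc{P})$, and the residual factor $t^\alpha e^{-c' t^2}$ with $t = \tau/\delta_0 \ge 1$ is maximized at $t^2 \asymp \alpha$, contributing the constant $\Cgrow^\alpha e^{(\alpha/2)[\log(\alpha/2) - 1]}$ advertised in the proposition. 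The main technical obstacle is precisely this constant bookkeeping: the calibration of $\delta_0$ and the Hoeffding exponent $c$ must align so that the supremum of the polynomial-times-exponential factor matches the stated constant rather than an inflated version. Conceptually the argument is classical --- randomized response provides the optimal one-bit $\diffp$-DP summary of a binary test, and the non-interactive i.i.d.\ structure permits direct concentration without invoking any of the strong data-processing machinery needed for the lower bound in Theorem~\ref{theorem:modulus-of-continuity}.
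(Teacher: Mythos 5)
Your proposal follows the same route as the paper's own proof essentially step for step: a binary-valued summary of the Neyman--Pearson acceptance set, Warner-style randomized response on that bit, Hoeffding's inequality to bound the misclassification probability, the reverse-triangle Condition~\ref{cond:reverse-triangle} to transfer the misclassification penalty into $\lossdist(P_0,P_1)\le\modcont(\tau;P_0,\mc{P})$, and then Condition~\ref{cond:polynomial-growth} to control the supremum of the polynomial-times-Gaussian factor. The only cosmetic difference is that you bound each of the two risk terms $\E_{P_a}[L(\what\theta,P_a)]$ separately, whereas the paper bounds the sum $\E_0[\cdot]+\E_1[\cdot]$; both dominate the max that appears in $\localminimax_n$. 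Your explicit Hoeffding constant (the $c$ you leave implicit) would in fact come out as $c=2$ for $\{0,1\}$-valued $Z_i$, which if pushed through gives the stronger radius $1/(\delta_\diffp\sqrt{2n})$ in the modulus; the paper's looser exponent $n\delta_\diffp^2\tau^2/2$ (and hence the radius $\sqrt{2}/(\delta_\diffp\sqrt{n})$) stems from its $\delta_\diffp$-normalization of the statistic, so your version proves something slightly sharper that still implies the stated bound by monotonicity of $\modcont$.
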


The proposition as written is a bit unwieldy, so we unpack it slightly.
We have $\delta_\diffp \ge \min\{\frac{\diffp}{5}, 1/3\}$, so
for each $P_1 \in \mc{P}$ there exists a non-interactive
$\diffp$-DP channel $\channel$ and estimator
$\what{\theta}$ such that
\begin{equation*}
  \max_{P \in \{P_0, P_1\}} \E_{P,Q}
  \left[L(\what{\theta}(Z_{1:n}), P)\right]
  \le 2 \Creverse \cdot
  \sup_{\tau \ge 0} \lossmodcont\left(\frac{3 \sqrt{2} \tau}{\sqrt{n
      \min\{9 \diffp^2 / 25, 1\}}},
  P_0, \mc{P}\right) e^{-\tau^2}.
\end{equation*}
Typically, this supremum is achieved at $\tau = O(1)$,
so that Proposition~\ref{proposition:achievable} shows that the
modulus~\eqref{eqn:private-moc-loss} at radius $\frac{O(1)}{\sqrt{n \diffp^2}}$
characterizes the local minimax risk to constants for $\diffp \lesssim 1$.
Appropriate assumptions,
including the following condition on the modulus of continuity,
allow more precision.
\begin{condition}[Polynomial growth]
  \label{cond:polynomial-growth}
  For each $P_0$, there exist $\alpha, \Cgrow < \infty$ such that
  for all $c \ge 1$
  \begin{equation*}
    \lossmodcont(c \delta; P_0, \mc{P})
    \le (\Cgrow c)^\alpha \lossmodcont(\delta; P_0, \mc{P}).
  \end{equation*}
\end{condition}
\noindent
Condition~\ref{cond:polynomial-growth} is similar to the typical
H\"older-type continuity properties assumed on the modulus
of continuity for estimation problems~\cite{DonohoLi87, DonohoLi91a}.
It holds, for example, for nonparametric mean estimation
problems (recall Example~\ref{example:mean-estimation}),
and we make this more concrete after the following corollary.

\begin{corollary}
  \label{corollary:achievable}
  In addition to the conditions of Proposition~\ref{proposition:achievable},
  let Condition~\ref{cond:polynomial-growth} hold.
  Then
  \begin{equation*}
    \localminimax(P_0, \loss, \mc{P}, \channeldistset)
    \le \gamma \beta^\alpha e^{\frac{\alpha}{2} [\log \frac{\alpha}{2} - 1]}
    \lossmodcont \left(\frac{\sqrt{2}}{\delta_\diffp \sqrt{n}}; P_0,
    \mc{P}\right).
  \end{equation*}
\end{corollary}
\begin{proof}
  We apply Proposition~\ref{proposition:achievable}.  For $\tau \le 1$, it
  already gives the result; otherwise, we use the growth
  condition~\ref{cond:polynomial-growth} to
  obtain
  $\E_0[\loss(\what{\theta} - \theta(P_0))]
  + \E_1[\loss(\what{\theta} - \theta(P_1))]
  \le 2 \Creverse \lossmodcont(\frac{\sqrt{2}}{
    \delta_\diffp \sqrt{n}}; P_0, \mc{P})
  \Cgrow^\alpha \sup_{\tau \ge 1}
  \tau^\alpha e^{-\tau^2}$.
  Noting that $\sup_{\tau \ge 0}
  \tau^\alpha e^{-\tau^2} = (\alpha/2)^{\alpha/2} e^{-\alpha/2}$ gives
  the result.
\end{proof}

We generally expect Condition~\ref{cond:polynomial-growth} to hold, so that
the modulus describes the risk.  Indeed, for any loss $\loss : \R^d \to \R_+$
satisfying Conditition~\ref{cond:reverse-triangle}, we immediately obtain
condition~\eqref{cond:polynomial-growth} whenever $\tvmod(\cdot)$ satisfies
the condition, which it does for each of
Examples~\ref{example:mean-estimation}--\ref{example:modulus-bernoulli}
and (locally) \ref{example:normal-modulus}--\ref{example:exponential-modulus}.

\subsection{Super-efficiency}
\label{sec:super-efficiency}

We provide our general super-efficiency result via a constrained risk
inequality~\cite{BrownLo96, DuchiRu18}.  Our result applies in the typical
setting in which the loss is $\loss(t) = \Phi(\ltwo{\theta -
  \theta(P)})$ for some increasing function $\Phi: \R_+ \to \R_+$, and we
use the shorthand $\risk(\what{\theta}, \theta, P) \defeq
\E_P[\Phi(\ltwos{\what{\theta}(Z) - \theta})]$ for the risk (expected loss)
of the estimator $\what{\theta}$ under the distribution $P$.  We build off
the approach of \citet[Thm.~1]{BrownLo96}, who show that if $\what{\theta}$
has squared error for a parameter $\theta$ under a distribution $P_0$, then
its risk under a distribution $P_1$ close to $P_0$ may be large (see
also~\cite[Thm.~6]{Tsybakov98}).
The next proposition, whose proof we provide
in Section~\ref{sec:proof-super-efficiency}, extends this to show
that improvement over our modulus of continuity lower bound at a
point $P_0$ implies worse performance elsewhere.
\begin{proposition}
  \label{proposition:super-efficiency}
  Let $\channel$ be a sequentially
  interactive ($2, \diffp^2$)-R\'{e}nyi private channel 
  (Def.~\ref{definition:local-renyi}) with associated marginal
  distributions $\marginprob_a^n(\cdot) = \int \channel(\cdot \mid x_{1:n})
  dP_a^n(x_{1:n})$.  Let Condition~\ref{cond:reverse-triangle}
  hold with parameter $\Creverse$. If for some $\eta \in [0, 1]$ the
  estimator $\what{\theta}$ satisfies
  \begin{equation*}
    \risk(\what{\theta}, \theta_0, \marginprob_0^n)
    \le \eta \lossmodcont\left(\frac{1}{\sqrt{4 n \diffp^2}};
    P_0, \mc{P} \right),
  \end{equation*}
  then for all $t \in [0, 1]$
  there exists a distribution $P_1 \in \mc{P}$ such that
  \begin{equation*}
    \risk(\what{\theta}, \theta(P_1), \marginprob_1^n)
    \ge \frac{1}{2\Creverse}
    \hinge{1 - \eta^\frac{ (1 - t)}{2}}^2
    \lossmodcont\Bigg(\frac{1}{4} \sqrt{\frac{t  \log \frac{1}{\eta}}{n \diffp^2}};
    P_1, \mc{P}\Bigg).
  \end{equation*}
\end{proposition}

The proposition depends on a number of constants, but roughly, it shows (for
small enough $\eta$, where we simplify by taking $t = 1/2$) that if an
estimator $\what{\theta}$ is super-efficient at $P_0$, in that
$\risk(\what{\theta}, \theta_0, \marginprob_0^n) \le \eta \cdot \lossmodcont(1 /
\sqrt{4 n \diffp^2}; P_0)$, then there exists $c > 0$ such that
for some $P_1$ we have $\risk(\what{\theta}, \theta_1 \marginprob_1^n) \ge c
\cdot \lossmodcont(\sqrt{\log (1/\eta)} / \sqrt{32 n \diffp^2}; P_1)$.  In this
sense, our bounds are sharp: any estimator
achieving much better risk than the local modulus at a
distribution $P_0$ must pay elsewhere.




\newcommand{\Var}{{\rm Var}}
\newcommand{\Cov}{{\rm Cov}}
\newcommand{\predloss}{L_{\rm pred}}
\newcommand{\symmetricloss}{\varphi}

\section{The private information}
\label{sec:examples}

The ansatz of finding a locally most difficult problem via the local
variation modulus of continuity~\eqref{eqn:private-moc} gives an approach to
lower bounds that leads to non-standard behavior for a number of classical
and not-so-classical problems in locally private estimation.
In this section, we investigate
examples in several one-dimensional parametric problems, showing how local
privacy leads to a different geometry of local complexities than classical
cases.  Our first step is to define the
\emph{$\bigLone$ information},
a private analogue of the Fisher
Information that
governs the complexity of estimation under local
privacy.  We illustrate the private $\bigLone$ information for several
examples, including of the mean of Bernoulli random variable, the scale of
an exponential random variable, and in linear and logistic models
(Sec.~\ref{sec:concrete-examples}), showing the consequences of (locally)
private estimation in one dimension. Our last two sections develop locally
private algorithms for achieving the local minimax risk. The first of these
(Sec.~\ref{sec:stochastic-gradient-stuff}) describes private stochastic
gradient algorithms and their (locally uniform) asymptotics, while the last
(Sec.~\ref{sec:one-param-expfams}) develops a new locally private algorithm
based on Fisher scoring to achieve the $\bigLone$ information in
one-dimensional exponential families.

\subsection{Private analogues of the Fisher Information}
\label{sec:fisher-information}

Our first set of results builds off of
Theorem~\ref{theorem:modulus-of-continuity} by performing asymptotic
approximations to the variation distance for regular parametric
families of distributions. One major consequence of our results is that, under 
the notions of locally private estimation we consider, the classical Fisher 
information is \emph{not} the right notion of complexity in estimation, though
an analogy is possible. Again we emphasize that
we hope to characterize complexity only to numerical
constant factors, seeking the problem-dependent terms that analogize the
classical information.

We begin by considering parametric families that allow analogues of Le Cam's
quadratic mean differentiability (QMD)~\cite[Ch.~7]{VanDerVaart98}. Consider
a 1-dimensional parametric collection $\mc{P} = \{P_\theta\}_{\theta \in
  \Theta}$ with dominating measure $\mu$ and densities $p_\theta =
dP_\theta / d\mu$.  Analogizing the QMD definition~\eqref{eqn:qmd-def} from
the Hellinger to the variation distance, we say
$\mc{P}$ \emph{is $\bigLone$-differentiable at $\theta_0$} with score
$\score_{\theta_0} : \mc{X} \to \R$ if
\begin{equation}
  \label{eqn:l1-differentiable}
  \int |p_{\theta_0 + h} - p_{\theta_0}
  - h \score_{\theta_0} p_{\theta_0}| d\mu
  = o(|h|).
\end{equation}
For QMD families,
$\bigLone$-differentiability is automatic
(see Appendix~\ref{sec:proof-nice-families-l1-differentiable} for a proof).
\begin{lemma}
  \label{lemma:nice-families-l1-differentiable}
  Let the family $\mc{P} \defeq \{P_\theta\}_{\theta \in \Theta}$ be
  QMD~\eqref{eqn:qmd-def} at the point $\theta_0$. Then
  $\mc{P}$ is $\bigLone$-differentiable at $\theta_0$
  with identical score $\score_\theta$ to the QMD case.
\end{lemma}

Recalling (as in Sec.~\ref{sec:classical-local-minimax}) that
for QMD families~\eqref{eqn:qmd-def}, the Fisher
information is
$\fisher[\theta] = \E_{P_\theta}[(\score_\theta)^2]$, and
$\dhel^2(P_{\theta + h}, P_\theta)
= \frac{1}{8} \fisher[\theta]h^2 + o(h^2)$,
by analogy, we define the \emph{$\bigLone$-information} as
\begin{equation}
  \label{eqn:l1-information}
  \information[\theta_0] 
  \defeq \E_{P_\theta}[|\score_\theta|] = \int \left|\score_{\theta_0}(x)\right|
  dP_{\theta_0}(x).
\end{equation}
We can then locally approximate the total variation distance by the
$\bigLone$-information:
\begin{equation*}
  \tvnorm{P_{\theta + h} - P_\theta} = \half \information[\theta]|h| + o(|h|).
\end{equation*}

We consider a somewhat general setting in which
we wish to estimate the value $\functional(\theta)$ of a functional
$\functional : \Theta \to \R$, where $\functional$
is $\mc{C}^1$ near $\theta_0$. We measure our error by
$\loss(\functional(\theta) - \functional(\theta_0))$,
and give a short proof of
the next proposition via Theorem~\ref{theorem:modulus-of-continuity}
in Appendix~\ref{sec:proof-information-private-lb}.

\begin{proposition}
  \label{proposition:information-private-lb}
  Let $\mc{P} = \{P_\theta\}_{\theta \in \Theta}$ be
  $\bigLone$-differentiable at $\theta_0$ with score $\score_{\theta_0}$,
  and $\E_{\theta_0}[|\score_{\theta_0}|] > 0$.  Let
  $\channeldistset_\diffp$ be the family of $(2, \diffp^2)$-R\'{e}nyi
  locally private
  sequentially interactive channels. Then for an $N = N(\functional,
  \theta_0, \mc{P}, \diffp)$ depending only on $\functional, \theta_0$, the
  family $\mc{P}$, and privacy level $\diffp$, for all $n \ge N$
  \begin{equation*}
    \localminimax_n(P_{\theta_0}, L, \mc{P}, \channeldistset_\diffp)
    \geq 
    \frac{1}{8}
    \cdot \loss\left(\frac{1}{5 \sqrt{2 n \diffp^2}}
    \cdot \information[\theta_0]^{-1} \functional'(\theta_0)\right).
  \end{equation*}
\end{proposition}

To obtain a matching upper bound we require the identifiability
assumption~\ref{assumption:identifiability}.  We make a simplifying
assumption that the loss $\loss$ is reasonably behaved, in that there exists a
numerical constant $C < \infty$ and $\beta \in \R_+$ such that $\loss(at) \le
C a^\beta \loss(t)$ for all $a \ge 1$.  Then, even when
$\channeldistset_\diffp$ is the collection of $\diffp$-locally
differentially private non-interactive channels (which,
by the discussion following Definition~\ref{definition:local-renyi},
is more limiting than channels in
Proposition~\ref{proposition:information-private-lb}), we can upper bound the
local minimax risk.
\begin{corollary}
  \label{corollary:l1-info-attained}
  Let the family $\mc{P} = \{P_\theta\}_{\theta \in \Theta}$ be $\bigLone$
  differentiable at $\theta_0$ with score $\score_{\theta_0}$ and
  $\E_{\theta_0}[|\score_{\theta_0}|] > 0$ and additionally let the above
  assumptions hold.  Let $\diffp \le 2$.  Then there exists a numerical
  constant $C < \infty$ and a $\delta_0 = \delta_0(\functional, \theta_0,
  \mc{P})$ depending only on $\functional, \theta_0$, and the family
  $\mc{P}$ such that
  \begin{equation*}
    \localminimax_n(P_{\theta_0}, \loss, \mc{P}, \channeldistset_\diffp)
    \le C \max\left\{
    (\beta/2e)^{\beta/2}
    \loss\left(\frac{\functional'(\theta_0)}{\information[\theta_0]}
    \frac{1}{\sqrt{n \diffp^2}}\right),
    \loss(\diam(\functional(\Theta))) e^{-\delta_0^2 n \diffp^2}\right\}.
  \end{equation*}
\end{corollary}
\noindent
The proof (see Appendix~\ref{sec:proof-l1-info-attained})
is a straightforward modification of that of
Claim~\ref{claim:qmd-bounded-local-minimax}.
Proposition~\ref{proposition:information-private-lb} and
Corollary~\ref{corollary:l1-info-attained} show that for
a (one-dimensional) parametric family $\mc{P} = \{P_\theta\}_{\theta \in \Theta}$,
the $\bigLone$ information describes the local modulus to within
numerical constants for small $\delta$:
for the modulus~\eqref{eqn:private-moc},
there are numerical constants $0 < c_{\textup{low}} \le c_{\textup{high}} < \infty$
such that
\begin{equation*}
  \tvmod(\delta; P_{\theta_0}, \{P_\theta\}_{\theta \in \Theta})
  \in [c_\textup{low}, c_\textup{high}]
  \cdot \frac{\delta}{\information[\theta_0]}
  ~~~ \mbox{for~all~small~} \delta > 0.
\end{equation*}
(A more general result holds; see Theorem~\ref{theorem:general-private} to
come.)  In analogy with Claim~\ref{claim:qmd-bounded-local-minimax}, where
the Fisher information $\fisher[\theta_0]$ characterizes the local minimax
squared error in non-private estimation, the $\bigLone$ information is an
alternative characterization---to within numerical constants---of the local
minimax risk in the locally private case.

As an alternative way to understand the proposition and corollary, we can
rescale the losses (in analogy with the local asymptotic
approach~\cite[Ch.~7]{VanDerVaart98}), and consider the sequence
$\loss_n(t) = \loss(\sqrt{n} \cdot t)$, where for simplicity we take $\loss(t) =
\min\{t^k, B\}$ for some $k, B < \infty$ (more generally, we could allow
$\loss$ to be bounded and nondecreasing).  Then under the conditions of
Corollary~\ref{corollary:l1-info-attained}, for $\channeldistset$ a
collection of $\diffp$-locally private channels,
\begin{equation*}
  \loss\left(
  \frac{\functional'(\theta_0)}{\information[\theta_0] \diffp}
  \right)
  \lesssim \localminimax_n(P_{\theta_0}, \loss_n, \mc{P}, \channeldistset)
  \lesssim \loss\left(
  \frac{\functional'(\theta_0)}{\information[\theta_0]\diffp}
  \right)
\end{equation*}
for all large $n$.  The analogous bounds in the non-private case are
$\loss(\fisher[\theta_0]^{-1/2} \functional'(\theta_0))$, the local
asymptotic complexity for one-dimensional
functionals~\cite[Ch.~7]{VanDerVaart98}.
Using Lemma~\ref{lemma:nice-families-l1-differentiable},
we have
\begin{equation*}
  \information[\theta_0] = \E_{\theta_0}[|\score_{\theta_0}|] \le
  \E_{\theta_0}[\score_{\theta_0}^2]^{1/2} = \fisher[\theta_0]^{1/2},
\end{equation*}
so the $\bigLone$ information is at most the Fisher information. In some cases,
as we shall see in Sec.~\ref{sec:concrete-examples}, it can be much smaller,
while in others the information measures are equal to numerical
constants.

\subsection{Examples and attainment}
\label{sec:concrete-examples}

We consider the local minimax complexity and $\bigLone$-information in four
different examples---estimation of Bernoulli and logistic the scale of an
exponential random variable, and a 1-dimensional linear regression
problem---which are particularly evocative.  In each, we derive the
$\bigLone$-information, applying
Proposition~\ref{proposition:information-private-lb} and
Corollary~\ref{corollary:l1-info-attained} to characterize the private local
minimax complexity. Throughout this section, we let $\channeldistset_\diffp$
be the collection of $\diffp$-locally differentially private channels, where
$\diffp = O(1)$ for simplicity. To keep the examples short, we do not always
provide algorithms, but we complete the picture via a private stochastic
gradient method in Section~\ref{sec:stochastic-gradient-stuff}.

\begin{example}[Example~\ref{example:modulus-bernoulli}
    continued]
  \label{example:l1-bernoulli-info}
  For $P_\theta = \bernoulli(\theta)$, the score is $\score_\theta(x) =
  \frac{x - \theta}{\theta(1 - \theta)}$, giving $\bigLone$-information
  $\information[\theta] = \half \E_\theta[|\score_\theta|] = 1$ for all
  $\theta$ and Fisher information $\fisher[\theta] =
  \E_\theta[\score_\theta^2] = \frac{1}{\theta(1 - \theta)}$. Thus
  \begin{equation*}
    \localminimax_n(P_{\theta_0}, \Lsq, \pfamily, \channeldistset_\diffp)
    \asymp \frac{1}{n \diffp^2}
  \end{equation*}
  for $\diffp = O(1)$.
  The lower bound is Proposition~\ref{proposition:information-private-lb},
  For the upper bound, consider
  the randomized-response mechanism~\cite{Warner65}
  that releases $Z_i = X_i$ with probability
  $\frac{e^\diffp}{1 + e^\diffp}$ and $Z_i = 1 - X_i$ otherwise,
  which is $\diffp$-differentially private. The
  plug-in estimate
  $\what{\theta}_n = \frac{(1+e^{\diffp})\wb{Z}_n - 1}{e^{\diffp}-1}$
  is unbiased for $\theta_0$ and has
  \begin{equation*}
    \E_0 [(\what{\theta}_n - \theta_0)^2] = 
    \Var_0(\what{\theta}_n) =
    \left(\frac{1+e^{\diffp}}{e^{\diffp} -1}\right)^2\Var(\wb{Z}_n)
    \le \frac{(1+e^{\diffp})^2}{4n(e^{\diffp}-1)^2}
    \lesssim \frac{1}{n \diffp^2}.
  \end{equation*}
  The sample mean achieves risk $\frac{\theta_0(1 - \theta_0)}{n}$, so the
  gap in efficiency between private and non-private estimation grows when
  $\theta_0(1 - \theta_0) \to 0$.  Roughly, the noise individual
  randomization introduces (the statistical
  cost of privacy) dominates the non-private (classical) statistical cost.
\end{example}

\noindent
As a brief remark, the paper~\cite{LeysiefferWa76} gives optimal asymptotics
for Bernoulli problems with randomized response channels; such precise
calculations are challenging when allowing arbitrary channels
$\channeldistset_\diffp$.

\begin{example}[Private one-dimensional logistic regression]
  \label{example:l1-logistic-info}
  A similar result to Bernoulli estimation that may be more striking holds
  for logistic regression, which is relevant for modern privacy
  applications, such as learning a classifier from (privately shared) user
  data~\cite{ErlingssonPiKo14,AbadiChGoMcMiTaZh16, ApplePrivacy17}. To see
  this, let $P_0$ be the distribution on pairs $(x, y) \in \{-1, 1\}^2$
  satisfying the logistic regression model
  \begin{equation}
    \label{eqn:logistic-model}
    P_0(y \mid x) = \frac{1}{1 + e^{-y \theta_0 x}}~~\text{and}~~P_0(x = \pm 1)
    = \half.
  \end{equation}
  Here we wish
  to construct a classifier that
  provides good confidence estimates $p(y \mid x)$ of a label $y$ given
  covariates $x$. We expect in the logistic regression
  model~\eqref{eqn:logistic-model} that large parameter values $\theta_0$
  should make estimating a classifier easier, as is the case without
  privacy. To make this concrete, we measure
  the error in estimating the conditional probability
  $p_\theta(y \mid x)$,
  \begin{equation*}
    \predloss(\theta, \theta_0)
    \defeq \E_{P_0}\left[\left|p_\theta(Y \mid X) - p_{\theta_0}(Y \mid X)\right|
      \right].
  \end{equation*}
  A calculation gives $\predloss(\theta, \theta_0)=
  |\phi(\theta) - \phi(\theta_0)|$, where $\phi(t) = 1/(1+e^t)$ is the
  logistic function. The Fisher information for the parameter $\theta$ in
  this model is $\fisher[\theta] = \phi(\theta)\phi(-\theta)$, so a change
  of variables gives $\fisher[{\phi(\theta)}] = \fisher[\theta]/
  (\phi'(\theta))^2$, and as $0 \le \predloss \le 1$,
  the non-private local minimax complexity is
  thus
  \begin{equation}
    \label{eqn:classical-local-predloss}
    \localminimax_n(P_0, \predloss, \mc{P}, \{\textup{id}\})
    \asymp \frac{1}{\sqrt{n}} \cdot \frac{1}{\sqrt{2+e^{\theta_0} + e^{-\theta_0}}}
    \asymp \frac{1}{\sqrt{n}} e^{-|\theta_0| / 2}
  \end{equation}
  by Proposition~\ref{proposition:classical-local-minimax} (and an analogous
  calculation to Claim~\ref{claim:qmd-bounded-local-minimax}).
  The delta method shows that the
  standard maximum likelihood estimator asymptotically achieves this
  risk.

  The private complexity is qualitatively different.  Noting
  that $XY \in \{\pm 1\}$ is a sufficient statistic for the logistic
  model~\eqref{eqn:logistic-model}, then applying
  Example~\ref{example:l1-bernoulli-info} via
  Proposition~\ref{proposition:information-private-lb} and
  Corollary~\ref{corollary:l1-info-attained} to the loss $\predloss$, we
  obtain numerical constants $0 < c_0 \le c_1 < \infty$
  such that for all large enough $n$,
  \begin{equation*}
    \frac{c_0}{\sqrt{n \diffp^2}}
    \le \localminimax_n(P_0, \predloss, \mc{P}, \channeldistset_\diffp)
    \le 
    \frac{c_1}{\sqrt{n \diffp^2}}.
  \end{equation*}
  By comparing this private local minimax complexity
  with the classical complexity~\eqref{eqn:classical-local-predloss},
  we see there is an exponential gap (in the parameter $|\theta_0|$)
  between the prediction risk achievable in private and
  non-private estimators---a
  non-trivial statistical price to pay for privacy.
\end{example}

\providecommand{\risk}{R}
\newcommand{\mlloss}{\ell}  
\newcommand{\noise}{\xi}

\noindent
While these examples have
parameter-independent 
$\bigLone$-information, this is
not always the case.

\begin{example}[Exponential scale,
    Example~\ref{example:exponential-modulus} continued]
  \label{example:l1-exponential-info}
  Let $p_\theta(x) = \frac{1}{\theta} \exp(-\frac{x}{\theta}) \indic{x
    \ge 0}$ be the density of an exponential distribution with scale
  $\theta$, and $\mc{P} = \{P_\theta\}_{a \le \theta \le b}$, where
  $a < b$ are any finite positive constants. The standard
  score is $\score_\theta(x) = \frac{\partial}{\partial \theta} \log
  p_\theta(x) = -\frac{1}{\theta} + \frac{x}{\theta^2}$, yielding Fisher
  information $\fisher[\theta] = \frac{1}{\theta^2}$, so that the classical
  local minimax complexity for the squared error (recall
  Claim~\ref{claim:qmd-bounded-local-minimax}) is
  $\localminimax_n(P_{\theta}, \losssq, \mc{P}, \{\mbox{id}\}) \asymp
  \frac{\theta^2}{n}$.
  In this case,
  the private local minimax complexity satisfies
  \begin{equation*}
    \localminimax_n(P_{\theta}, \losssq, \mc{P}, \channeldistset_\diffp)
    \asymp \frac{\theta^2}{n \diffp^2}.
  \end{equation*}
  To see this, note that the
  $\bigLone$-information~\eqref{eqn:l1-information} is $\information[\theta]
  = \E_\theta[|X/\theta^2 - 1/\theta|]$, so $\frac{1}{\theta}
  \lesssim \information[\theta] \lesssim \frac{1}{\theta}$.
  Proposition~\ref{proposition:information-private-lb} and
  Corollary~\ref{corollary:l1-info-attained} then give the bounds.  Thus,
  the private and non-private local minimax complexities
  differ (ignoring numerical constants) by the factor
  $1 / \diffp^2$. In distinction from Examples~\ref{example:l1-bernoulli-info}
  and~\ref{example:l1-logistic-info}, problems that are relatively easy in the
  classical setting ($\theta$ near $0$) continue to be easy.
\end{example}

\begin{example}[One-dimensional linear regression]
  \label{example:l1-regression-info}
  Consider a linear regression
  model where the data come in independent pairs $(X_i,
  Y_i)$ satisfying
  \begin{equation*}
    Y_i = \theta X_i + W_i~~\mbox{where}~~W_i \sim \normal(0, \sigma^2)
  \end{equation*}
  and the target is to estimate $\theta \in \R$. Fixing the distribution of
  $X$ and letting $\Theta \subset \R$ be a compact interval, we let
  $\pfamily = \{P_\theta\}_{\theta \in \Theta}$. We have negative
  log-likelihood $\mlloss_\theta(x, y) = \frac{1}{2 \sigma^2} (x \theta -
  y)^2$ and score $\score_\theta(x, y) = \frac{1}{\sigma^2} x(x
  \theta - y)$, so $\score_{\theta_0}(x, y) =
  \frac{x w}{\sigma^2}$ for the noise $w = y - x\theta_0$. Calculating
  the $\bigLone$ information $\information[\theta] = \sqrt{2/\pi} \E[|X|]
  / \sigma$ and applying Proposition~\ref{proposition:information-private-lb}
  and Corollary~\ref{corollary:l1-info-attained} yields
  \begin{equation}
    \label{eqn:linear-regression-local-minimax}
    \localminimax_n(P_{\theta_0}, \losssq, \mc{P}, \channeldistset_\diffp)
    \asymp \frac{\sigma^2}{n \diffp^2 \E[|X|]^2}.
  \end{equation}
  Comparing the rates~\eqref{eqn:linear-regression-local-minimax} with the
  non-private local minimax rate is instructive.
  Claim~\ref{claim:qmd-bounded-local-minimax}
  shows that
  $\localminimax_n(P_{\theta_0}, \losssq, \mc{P}, \{\mbox{id}\})
  \asymp \frac{\sigma^2}{n \E[X^2]}$.
  The local private minimax
  complexity~\eqref{eqn:linear-regression-local-minimax} depends on $X$
  through $\E[|X|]^2$, while the non-private complexity above depends
  inversely on $\E[X^2]$. Thus, as $X$ becomes more dispersed in that the
  ratio $\E[X^2] / \E[|X|]^2$ grows, the gap between private and non-private
  rates similarly grows.  Intuitively, a dispersed $X$
  requires more individual randomization to protect private information
  in $X$, increasing the statistical price of privacy.
\end{example}

\subsection{Attainment by stochastic gradient methods}
\label{sec:stochastic-gradient-stuff}

The second of our major desiderata is to (locally) uniformly achieve the
local minimax risk, and to that end we develop results on a private (noisy)
stochastic gradient method, which rely on \citet{PolyakJu92}; recall also
Duchi et al.'s (minimax-optimal) private stochastic gradient
method~\cite{DuchiJoWa18}. We
prefer (for brevity) to avoid the finest convergence conditions, instead
giving references as possible; we show how to attain the rates
in Examples~\ref{example:l1-exponential-info}
and~\ref{example:l1-regression-info}.

We wish to minimize a risk
$\risk_P(\theta) \defeq \E_P[\mlloss(\theta, X)]$, where $\mlloss : \Theta
\times \mc{X} \to \R$ is convex in its first argument. A noisy
stochastic gradient algorithm iteratively updates a parameter $\theta$ for
$i = 1, 2, \ldots$,
\begin{equation}
  \theta^{i + 1} = \theta^i - \stepsize_i (\nabla \mlloss(\theta^i, X_i)
  + \noise^i),
  \label{eqn:noisy-sgd-iteration}
\end{equation}
where $\noise^i$ is i.i.d.\ zero-mean noise, we assume $X_i
\simiid P$, and $\stepsize_i = \stepsize_0 i^{-\beta}$ are stepsizes
with $\beta \in (\half, 1)$ and $\stepsize_0 > 0$. Then under
appropriate conditions~\cite[Thm.~2 and Lem.~2]{PolyakJu92},\footnote{ The
  following suffice: (i) $\risk_P$ is $\mc{C}^2$ near $\theta\opt =
  \argmin_\theta \risk_P(\theta)$ with $\nabla^2 \risk_P(\theta\opt) \succ
  0$, (ii) there is some finite $C$ such that $\E_P[\norm{\nabla
      \mlloss(\theta, X)}^2] \le C(1 + \norm{\theta - \theta\opt}^2)$ and
  $\norm{\nabla \risk_P(\theta)} \le C(1 + \norm{\theta - \theta\opt})$ for
  all $\theta$, and (iii) $\limsup_{\theta \to \theta\opt} \E[\norm{\nabla
      \mlloss(\theta, X) - \nabla \mlloss(\theta\opt, X)}^2] = 0$.  } 
for $\theta\opt = \argmin_\theta \risk_P(\theta)$,
\begin{equation}
  \label{eqn:standard-sgd-convergence}
  \frac{1}{\sqrt{n}}
  \sum_{i = 1}^n (\theta^i - \theta\opt)
  = \frac{1}{\sqrt{n}} \sum_{i = 1}^n
  \nabla^2 \risk_P(\theta\opt)^{-1} (\nabla \mlloss(\theta\opt, X_i)
  + \noise^i) + o_P(1).
\end{equation}

The average $\wb{\theta}^n = \frac{1}{n} \sum_{i = 1}^n \theta^i$ is
asymptotically linear~\eqref{eqn:asymptotic-linearity}, so it satisfies the
regularity properties we outline in
Lemma~\ref{lemma:regular-differentiability}. The key is that $\nabla^2
\risk_P(\theta\opt)^{-1} \nabla \mlloss(\theta\opt, X_i)$ is typically the
influence function for the parameter $\theta$
(see~\cite[Proposition 1 and Lemma 8.1]{DuchiRu20}),
and we thus call this case regular:
\begin{definition}
  \label{definition:regular-risk}
  Let $\Pmodel = \{P_h\}_{h \in \R^d}$ be a sub-model of $\mc{P}$ around
  $P_0$, quadratic mean differentiable~\eqref{eqn:qmd-def} with
  score $g : \mc{X} \to \R^d$ at $P_0$. Let $\risk_h(\theta) =
  \E_{P_h}[\mlloss(\theta, X)]$ and define $\theta_h = \argmin_{\theta \in
    \Theta} \risk_h(\theta)$. The parameter $\theta_h$
  is \emph{regular} if it has
  influence function $\influencefunc(x) = \nabla^2 \risk_0(\theta_0)^{-1}
  \nabla \mlloss(\theta_0, x)$, equivalently,
  \begin{equation*}
    \theta_h = \theta_0 + \nabla^2 \risk_0(\theta_0)^{-1} \cov_0(\nabla
    \mlloss(\theta_0, X), g(X)) h + o(\norm{h}).
  \end{equation*}
\end{definition}
\noindent
By combining Lemma~\ref{lemma:regular-differentiability} with
the convergence guarantee~\eqref{eqn:standard-sgd-convergence}, we obtain
the following result.
\begin{proposition}
  \label{proposition:sgd-is-good}
  Let $\theta^i$ follow the noisy stochastic gradient
  iteration~\eqref{eqn:noisy-sgd-iteration} and satisfy the
  convergence~\eqref{eqn:standard-sgd-convergence}.  Let $\Pmodel$
  be a sub-model for which the risk $\risk_P$ is regular
  (Def.~\ref{definition:regular-risk}) and let
  \begin{equation*}
    Z \sim \normal\left(0,
    \nabla^2 \risk_{P_0}(\theta_0)^{-1}
    \left(\cov_0(\nabla \mlloss(\theta_0, X))
    + \cov(\noise) \right)
    \nabla^2 \risk_{P_0}(\theta_0)^{-1}\right).
  \end{equation*}
  Then for any bounded sequence $h_n \in \R^d$,
  $\sqrt{n}(\wb{\theta}^n - \theta_{h_n / \sqrt{n}})
  \cd Z$ under $X_i \simiid P_{h_n/\sqrt{n}}$,
  and for any bounded continuous $\loss$ and $c < \infty$,
  \begin{equation*}
    \lim_{n \to \infty} \sup_{\norm{h} \le c / \sqrt{n}}
    \E_{P_h}\left[\loss(\sqrt{n}(\wb{\theta}^n - \theta_h))\right]
    = \E[\loss(Z)].
  \end{equation*}
\end{proposition}

We complete Examples~\ref{example:l1-exponential-info} and
\ref{example:l1-regression-info} using
Proposition~\ref{proposition:sgd-is-good}:

\begin{example*}[Example~\ref{example:l1-exponential-info} continued]
  We return to the shape parameter in the exponential family. 
  As the median of $X$ is
  $\log 2 \cdot \theta_0$, estimating
  $\theta_0$ is equivalent to estimating $\median(X)$,
  or solving
  \begin{equation*}
    \minimize_{\theta} \risk_{P_0}(\theta) \defeq \E_{P_0}[\mlloss(\theta, X)]
    ~~ \mbox{for}~~ \mlloss(\theta, x) = |\theta - x|.
  \end{equation*}
  The stochastic gradient
  iteration~\eqref{eqn:noisy-sgd-iteration} is
  $\theta^{i+1} = \theta^{i} - \stepsize_i Z_i$
  where
  $Z_i = \sign(X_i - \theta^{i}) + \frac{1}{2\diffp} \cdot \laplace(1)$
  is $\diffp$-differentially private as
  and $\laplace(1)$ is the standard Laplacian distribution.
  For $P = \exponential(\theta)$,
  $\risk'(t) = P_\theta(t > X) - P_\theta(t < X)
  = 1 - 2 e^{-t/\theta}$ and
  $\risk''(t) = \frac{2}{\theta} e^{-t/\theta}$,
  at $t = \median(X) = \log 2 \cdot \theta$ we obtain
  $\risk''(\median(X)) = \frac{1}{\theta}$.
  For any symmetric quasiconvex $\loss : \R_+ \to \R_+$, define
  $\loss_n(t) = \loss(\sqrt{n} \cdot t)$,
  and let $\what{\theta}_n = \wb{\theta}^n / \log 2$.
  Applying Example~\ref{example:l1-exponential-info} and
  Proposition~\ref{proposition:sgd-is-good} yields
  \begin{equation*}
    \sup_{c < \infty} \limsup_{n \to \infty}
    \sup_{|\theta - \theta_0| \le c / \sqrt{n}}
    \frac{\E_\theta[\loss(\sqrt{n}(\what{\theta}_n - \theta))]}{
      \localminimax_n(P_{\theta_0}, \loss_n, \mc{P}, \channeldistset_\diffp)}
    \le \frac{\E[\loss(C_1 \frac{\theta_0}{\diffp} W)]}{
      \loss(C_0 \frac{\theta_0}{\diffp})}
  \end{equation*}
  where $W \sim \normal(0, 1)$ is standard normal and $C_i$ are numerical
  constants.  Whenever $\loss$ is such that $\E[\loss(C_1 \sigma W)]
  \lesssim \loss(C_0 \sigma)$---for example, $\loss(t) = \min\{t^2, B\}$---the
  private stochastic gradient method is
  local minimax rate optimal.
\end{example*}

\begin{example*}[Example~\ref{example:l1-regression-info} continued]
  We have $Y = X\theta_0 +
  \sigma W$ for $W \sim \normal(0, 1)$, where for simplicity we assume
  $\sigma$ is known. We
  transform the problem into a stochastic optimization problem, taking
  care to choose the correct objective for privacy and efficiency.
  Let $\symmetricloss : \R \to \R_+$ be any $1$-Lipschitz symmetric convex
  function with Lipschitzian gradient;
  for example, the Huber loss $\symmetricloss(t) =
  \half t^2$ for $|t| \le 1$ and $\symmetricloss(t) = |t| - \half$ satisfies
  these conditions.
  Choosing loss $\mlloss(\theta, x, y) = \frac{\sigma}{|x|}
  \symmetricloss(\frac{x \theta - y}{\sigma})$,  
  our problem is to minimize
  the risk
  $\risk_{P}(\theta) \defeq \E_P[\mlloss(\theta, X, Y)]$.
  Evidently $\mlloss$ is also $1$-Lipschitz with respect to $\theta$, with
  $\mlloss'(\theta, x, y) = \sign(x) \symmetricloss'(\sigma^{-1}(x \theta -
  y))$ and $\mlloss''(\theta, x, y) = \sigma^{-1}
  \symmetricloss''(\sigma^{-1}(x \theta - y))$. The private stochastic
  gradient iteration~\eqref{eqn:noisy-sgd-iteration} is then
  \begin{equation*}
    \theta^{i + 1} = \theta^i - \stepsize_i Z_i
    ~~ \mbox{where} ~~
    Z_i = \sign(X_i) \symmetricloss'\left(\frac{1}{\sigma}(X_i \theta^i - Y_i)
    \right)
    + \frac{1}{2 \diffp} \laplace(1).
  \end{equation*}
  Letting $Z_\infty = \symmetricloss'(W_i) + \frac{1}{2 \diffp} \laplace(1)$,
  a calculation shows that for $P_0$ corresponding to $\theta_0$,
  \begin{equation*}
    \risk_{P_0}''(\theta_0)
    = \frac{\E[|X|]}{\sigma} \E[\symmetricloss''(W)]
    ~~ \mbox{and} ~~
    \var(Z_\infty) = \E[\symmetricloss'(W)^2] + \frac{1}{2 \diffp^2}.
  \end{equation*}
  We apply Proposition~\ref{proposition:sgd-is-good} to obtain
  that along any convergent sequence $h_n \to h$,
  \begin{equation*}
    \sqrt{n}\left(\wb{\theta}^n - (\theta_0 + h_n / \sqrt{n})\right)
    \mathop{\cdlong}_{P_{\theta_0 + h_n / \sqrt{n}}}
    \normal\left(0, \frac{\sigma^2}{\E[|X|]^2 \E[\symmetricloss''(W)]^2}
    \left(\E[\symmetricloss'(W)^2] + \frac{1}{2 \diffp^2}\right)\right).
  \end{equation*}
  Notably $\E[\symmetricloss'(W)^2] \le 1$ as $\symmetricloss$ is
  $1$-Lipschitz, and whenever $\symmetricloss$ is strongly convex
  near $0$, then $\E[\symmetricloss''(W)] > 0$ is a positive
  constant. In particular, the stochastic gradient estimator
  is local minimax rate optimal, achieving asymptotic
  variance $O(1) \frac{\sigma^2}{\diffp^2 \E[|X|]^2}$ uniformly near
  $\theta_0$, which (as in Example~\ref{example:l1-regression-info}) is
  unimprovable except by numerical constants.
\end{example*}

\subsection{Asymptotic achievability in one-parameter exponential families}
\label{sec:one-param-expfams}

Proposition~\ref{proposition:information-private-lb} shows an instance-specific
lower bound of $(n \diffp^2 \information[\theta_0]^2)^{-1}$, where
$\information[\theta_0] = \E_{\theta_0}[|\score_{\theta_0}|]$ is the
$\bigLone$ information, for the estimation of a single parameter.
This section develops a novel locally private estimation scheme to
achieve the lower bound for general one-parameter exponential family 
models. Subtleties in the construction make showing that the estimator
is regular or uniform challenging, though we conjecture that it
is locally uniform.
Let $\mc{P} = \{P_\theta\}_{\theta \in \Theta}$ be a one parameter exponential 
family, so that for a
base measure $\mu$ on $\mc{X}$, each distribution $P_{\theta}$ has
density
\begin{equation*}
  p_\theta(x) \defeq
  \frac{dP_\theta}{d\mu} (x) = \exp\left(\theta T(x) - A(\theta)\right),
\end{equation*}
where $T(x)$ is the sufficient statistic and $A(\theta) = \log \int
e^{\theta T(x)} d\mu(x)$ is the log partition function.\footnote{Writing the
  family this way is no loss of generality. While typically one writes
  $p_\theta(x) = h(x) \exp(\theta^T T(x) - A(\theta))$, we can always
  include $h$ in the base measure $\mu$ and push-forward through the
  statistic $T$.}  It is well known
(cf.~\cite[Ch.~2.7]{Brown86,LehmannRo05}) that $A$ satisfies $A'(\theta) =
\E_\theta[T(X)]$ and $A''(\theta) = \var_\theta(T(X))$.  In this case, the
$\bigLone$-information~\eqref{eqn:l1-information} is the mean absolute
deviation
\begin{equation*}
  \information[\theta] = \E_\theta[|T(X) - A(\theta)|] = 
  \E_\theta[|T(X) - \E_\theta [T(X)]|].
\end{equation*}

We provide a procedure asymptotically achieving mean square error
scaling as $(n\diffp^2 \information[\theta]^2)^{-1}$, which
Proposition~\ref{proposition:information-private-lb} shows is optimal.  Our
starting point is the observation that for a one-parameter exponential
family, $\theta \mapsto P_\theta(T(X) \ge t)$ is strictly
increasing in $\theta$ for any fixed $t \in \supp \{T(X)\}$~\cite[Lemma
  3.4.2]{LehmannRo05}. A natural idea is to first estimate
$P_{\theta}(T(X) \ge t)$ and invert to estimate
$\theta$.  To that end, we develop a private two-sample procedure,
where with the first we estimate $\hat{t} \approx \E[T(X)]$, using the
second sample to approximate and invert $P_\theta(T(X) \ge \hat{t})$.
Now, define $\Psi : \R^2 \to
\R_+$ by
\begin{equation}
  \Psi(t, \theta) \defeq P_\theta (T(X) \geq t)
  = \int \indic{T(x) \geq t}
  \exp\left(\theta T(x) - A(\theta)\right) d\mu (x).
\end{equation}
The private two stage algorithm we develop splits a total sample of
size $2n$ in half, using the first half of
the sample to construct a consistent estimate $\what{T}_n$ of the value
$A'(\theta) = \E_{\theta}[T]$
(Duchi et al.'s $\diffp$-differentially private mean
estimators provide consistent estimates of $\E[T(X)]$ so long as
$\E[|T(X)|^k] < \infty$ for some $k > 1$~\cite[Corollary~1]{DuchiJoWa18}.)
In the second stage, the algorithm uses $\what{T}_n$ and
the second half of the sample in a randomized response procedure:
construct $V_i$ and private $Z_i$ as
\begin{equation*}
  V_i = \indics{T(X_i) \ge \what{T}_n},
  ~~~
  Z_i =
  \frac{e^\diffp + 1}{e^\diffp - 1}
  \cdot \left[\left\{
    \begin{array}{cl}
      V_i & \mbox{w.p.}~
      \frac{e^\diffp}{e^\diffp + 1} \\
      1 - V_i & \mbox{w.p.}~
      \frac{1}{e^\diffp + 1}
    \end{array}
    \right\}
    - \frac{1}{e^\diffp + 1}
    \right].
\end{equation*}
By inspection, this is $\diffp$-differentially-private and
$\E[Z_i \mid V_i] = V_i$. Now, define the inverse function
\begin{equation*}
  H(p, t) \defeq \inf \left\{\theta \in \R \mid P_\theta(T(X) \ge t)
  \ge p \right\}
  = \inf \left\{\theta \in \R \mid \Psi(t, \theta) \ge p \right\}.
\end{equation*}
Setting $\wb{Z}_n = \frac{1}{n} \sum_{i = 1}^n Z_i$, our final
$\diffp$-differentially private estimator is
\begin{equation}
  \label{eqn:private-expfam-estimator}
  \what{\theta}_n = H(\wb{Z}_n, \what{T}_n).
\end{equation}
We then have a convergence result showing that the
estimator~\eqref{eqn:private-expfam-estimator} has asymptotic
variance within a constant factor of the local minimax bounds. We
defer the (involved) proof to
Appendix~\ref{sec:proof-uniform-achievability-one-dim-exp}.

\newcommand{\error}{\mathcal{E}}
\begin{proposition}
  \label{proposition:uniform-achievability-one-dim-exp}
  Assume that $\var_{\theta} \left(T(X)\right) > 0$ and $\what{T}_n \cp t_0
  \defeq \E_{\theta_0}[T(X)]$.  Define $\delta_\diffp^2 = \frac{e^{\diffp}}{(e^{\diffp} - 1)^2}$.  
  Then there exist random variables $G_n = \Psi(\what{T}_n, \theta_0) \in [0, 1]$,
  $\error_{n, 1}$, and $\error_{n,2}$ such that under $P_{\theta_0}$,
  \begin{equation*}
    \sqrt{n} \left(\what{\theta}_n - \theta_0\right) 
    = 2 \information[\theta_0]^{-1} (\error_{n, 1} + \error_{n, 2}) + o_P(1)
  \end{equation*}
  where
  \begin{equation}
    \left(\error_{n, 1},
    \frac{1}{G_n (1 - G_n)} \error_{n,2}\right)
    \cd \normal\left(0, \diag(\delta_\diffp^{-2}, 1)\right).
  \end{equation}
\end{proposition}
\noindent
The complexity of the statement arises because the distribution of $T(X)$
may be discontinuous, including at $\E_{\theta_0}[T(X)]$, necessitating
the random variables $\error_{n,1}, \error_{n,2}$, and $G_n$
for the limit.


\section{Private local minimax theory for more general functionals}
\label{sec:general-influence-functions}

We broaden our investigation to consider the
local minimax approach in semi- or nonparametric problems with high or
infinite-dimensional parameters, but where the target of interest is
one-dimensional.  We first present analogues, to within numerical constants, of
classical semi-parametric information lower bounds
(Section~\ref{sec:private-info-general}). We illustrate the bounds for
estimating a functional of an exponential family parameter, where
subtleties distinguish the problem from the non-private
case. Most saliently, as we show, efficiency depends strongly on the model
assumed by the statistician: while in the non-private case, parametric and
nonparametric models yield the same efficiency bounds (as we will revisit),
the private parametric and semi-parametric cases are quite different.



\subsection{Private information, influence functions, and tangent spaces}
\label{sec:private-info-general}

Our goal here is to generalize the results in
Section~\ref{sec:examples} to provide private information lower bounds
for semi-parametric estimation problems. Similar to what we did in
Section~\ref{sec:examples}, our development builds off
Theorem~\ref{theorem:modulus-of-continuity} and
Proposition~\ref{proposition:achievable} by performing a local expansion of
the variation distance. We
parallel some of the classical development in
Section~\ref{sec:attainment-sortof},
presenting one-dimensional submodels, tangent spaces, and an
$\bigLone$-influence function, after which we derive our information
bounds.


We begin as usual with a family $\mc{P}$ of distributions, and we consider
one-dimensional sub-models $\Pmodel \subset \mc{P}$ indexed by $h \in
\R$. In analogy with quadratic mean differentiability~\eqref{eqn:qmd-def}
and our treatment in Section~\ref{sec:examples}, we say $h \mapsto P_h$ is
\emph{$\bigLone$-differentiable at $P_0$} with score $g : \mc{X} \to \R$ if
\begin{equation}
  \label{eqn:L-1-diff}
  \int |dP_h - dP_0 - h g d P_0| = o(|h|)
\end{equation}
as $h \to 0$. As in Section~\ref{sec:attainment-sortof}, we let
$h \mapsto P_h$ range over (a collection of) possible submodels
to obtain a collection of score functions $\{g\}$,
and we define the \emph{$\bigLone$-tangent space $\Lonetangentset$} to
be the closed linear span of these scores.
In contrast to the tangent space from
quadratic-mean-differentiability~\eqref{eqn:l2-diff}, which admits
Hilbert-space geometry,
$\bigLone$-differentiability gives a different duality.
Moreover, Lemma~\ref{lemma:nice-families-l1-differentiable}
states that QMD families must be $\bigLone$-differentiable, so that
the $\bigLone$-tangent space $\Lonetangentset$ always contains
the classical tangent space.
An example may be clarifying:

\begin{example}[Fully nonparametric $\bigLone$ tangents]
  \label{example:nonparametric-lone-tangent}
  In the fully nonparametric case---where $\mc{P}$ consists of all
  distributions supported on $\mc{X}$---we can identify
  $\Lonetangentset$ with mean-zero $g \in \bigLone(P_0)$. Indeed,
  for any such $g$,
  define the models $dP_h = \hinge{1 + hg} dP_0 / C_h$, where
  $C_h = \int \hinge{1 + hg} dP_0$.
  Then $|\hinge{1 + hg} - 1| \le h|g|$, and so
  by dominated convergence we have
  $1 \le C_h = 1 + o(h)$, as
  \begin{equation*}
    0 \le \frac{1}{h}(C_h - 1)
    = \int \frac{\hinge{1 + hg} - 1}{h} dP_0
    \mathop{\longrightarrow}_{h \to 0} \int g dP_0 = 0.
  \end{equation*}
  Similarly
  $\{dP_h\}_{h \in \R}$ has score $g$ at $P_0$ because
  $\lim_{h \to 0} \int \frac{1}{h} |\frac{\hinge{1 + hg} - 1}{C_h} - hg| dP_0
  = 0$ by the dominated convergence theorem as well.
  Conversely, for any $g \in \Lonetangentset$, we have
  $\int|dP_h - dP_0| \le 2$, while
  $o(h) = \int |dP_h - dP_0 - h g dP_0| \ge
  |h| \int |g| dP_0 - 2$, so that $g \in \bigLone(P_0)$. That
  $g$ is mean zero is
  immediate, as
  $\int g d P_0 = \frac{1}{h} \int (dP_0 + h g dP_0 - dP_h)
  = o(1)$ as $h \to 0$.  

  In contrast, in the fully nonparametric case for quadratic mean
  differentiability~\cite[Example~25.16]{VanDerVaart98},
  the tangent set at $P_0$ is all mean zero $g \in \bigLtwo(P_0)$, a
  smaller set of potential tangents.
\end{example}

To give a private information for estimating a function $\theta : \mc{P} \to
\R^d$, we consider submodels $\{P_h\}_{h \in \R}$ where $h \mapsto
\theta(P_h)$ is suitably smooth at $h = 0$.  We say $\theta(\cdot)$
is \emph{differentiable at $P_0$ relative to $\Lonetangentset$}
if there exists a
continuous linear mapping $\varphi_{P_0} : \Lonetangentset \to \R^d$ such that
for any $\bigLone$-differentiable submodel $\Pmodel$ with score $g$ at
$P_0$,
\begin{equation*}
  \theta(P_h) - \theta(P_0) = h \varphi(g) + o(h).
\end{equation*}
As $g \mapsto \varphi(g)$ is continuous for $g \in \Lonetangentset$,
it has a continuous extension to all of $\bigLone(P_0)$ and so
by duality there exists $\influencefunc : \mc{X} \to \R^d$, with
coordinate functions in $\bigLinf(P_0)$, such that
\begin{equation*}
  \varphi(g) = \E_{P_0}[\influencefunc(X) g(X)] =
  \int \influencefunc(x) g(x) dP_0(x).
\end{equation*}
We call this $\influencefunc$ the \emph{private influence function}.
Again, contrast with the classical approach is instructive:
there (recall Eq.~\eqref{eqn:differentiable-functional}), the
Hilbert space structure of the tangent sets
allows one to use the Riesz representation
theorem to guarantee the existence of an influence function
$\influencefunc \in \bigLtwo(P_0)$.

The main result of this section gives an information-type lower bound for
general estimation problems where we wish to estimate a functional
$\functional(\theta(P))$, where $\functional : \R^d \to \R$ is
$\mc{C}^1$. We measure error by a symmetric quasiconvex $\loss : \R \to
\R_+$, suffering loss $\loss(\what{\functional} -
\functional(\theta(P)))$ for an estimate $\what{\functional}$.  We then
obtain the following generalization of
Proposition~\ref{proposition:information-private-lb}.  (See
Appendix~\ref{sec:proof-general-private} for a proof.)

\begin{theorem}
  \label{theorem:general-private}
  Let $\Pmodel = \{P_h\}_{h \in \R} \subset \mc{P}$ be an
  $\bigLone$-differentiable submodel at $P_0$ with score $g$ and
  let $\theta : \mc{P} \to \R^d$ have $\bigLone$-influence function
  $\influencefunc$ at $P_0$. Let
  $\channeldistset_\diffp$ be the family of $(2, \diffp^2)$-locally
  R\'{e}nyi private channels (Def.~\ref{definition:local-renyi}). Then
  for an $N = N(\functional, \diffp, \theta, \Pmodel)$ independent of
  loss $\loss$, for all $n \ge N$
  \begin{equation*}
    \localminimax_n(P_0, \loss, \Pmodel, \channeldistset_\diffp)	\ge 
    \frac{1}{8} \cdot 
    \loss\left(\frac{1}{6 \sqrt{n\diffp^2}}
    \frac{\nabla\functional(\theta_0)^T \E_0[\influencefunc(X) g(X)]}{
      \E_0[|g(X)|]}\right).
  \end{equation*}
\end{theorem}
\noindent
The quantity $\information[g,0] \defeq \E_0[|g(X)|] =
\int |g| dP_0$ is the nonparametric analogue of the private
information~\eqref{eqn:l1-information} in
Proposition~\ref{proposition:information-private-lb}, as the score function
$g$ is completely parallel to the parametric case.
Additional remarks show how this result parallels and complements
classical local minimax theory.

\paragraph{Recovering the parametric case}
Theorem~\ref{theorem:general-private}
specializes to
Proposition~\ref{proposition:information-private-lb}
for one-dimensional parametric families. Let the family
be $\mc{P}_{\para} = \{P_\theta\}_{\theta \in \Theta}$ and
$L^1$
differentiable at $P_{\theta_0}$. Then the private tangent space
$\Lonetangentset$ is then the linear space spanned by the score
$\score_{\theta_0}$, and the influence function for $\theta$ is
$\influencefunc = \fisher[\theta_0]^{-1}\score_{\theta_0}$, where the Fisher
information is $\fisher[\theta_0] = \E_0[\score_{\theta_0}(X)^2]$.
Specializing Theorem~\ref{theorem:general-private} gives $\nabla
\functional(\theta_0)^T \E_0[\influencefunc(X) g(X)] / \E_0[|g(X)|] =
\functional'(\theta_0) / \E_0[|\score_{\theta_0}(X)|]$,
recovering Proposition~\ref{proposition:information-private-lb}.

\newcommand{\Pallmodel}{\mc{P}_{\textup{all},0}}

\paragraph{Dualities and classical information bounds}
As a corollary of the $\bigLone$/$\bigLinf$ duality that privacy evidently
entails in Theorem~\ref{theorem:general-private} and
Lemma~\ref{lemma:nice-families-l1-differentiable}, we have the following
lower bound.
\begin{corollary}
  \label{corollary:nonparametric-private-information}
  Let the conditions of Theorem~\ref{theorem:general-private} hold,
  and additionally let $\Pallmodel$ be a collection of
  QMD sub-models with scores $g$ that are dense in $\bigLone(P_0)$.
  Then there exists an $N$ independent of the loss $\loss$ such that
  for all $n \ge N$,
  \begin{equation*}
    \localminimax(P_0, \loss, \Pallmodel, \channeldistset_\diffp)
    \ge \frac{1}{8} \cdot \loss\left(\frac{1}{8 \sqrt{n \diffp^2}}
    \esssup_x |\nabla \functional(\theta_0)^T \influencefunc(x)| \right).
  \end{equation*}
\end{corollary}

The local minimax lower bound \emph{necessarily} depends on the (essential)
supremum of the influence function $\nabla \functional(\theta_0)^T
\influencefunc(x)$ over $x \in \mc{X}$; notably, this occurs even when the
tangent set $\tangentset$ is dense in $\bigLtwo(P_0)$.
We may compare this with classical (nonparametric) information bounds, which
rely on the Hilbert-space structure of quadratic-mean-differentiability and
are thus smaller and qualitatively different. In the classical
setting~\cite[Ch.~25.3]{VanDerVaart98} (and
Sec.~\ref{sec:attainment-sortof}), we recall that we may identify
$\tangentset$ with mean-zero functions $g \in \bigLtwo(P_0)$, and we obtain
the analogous lower bound
\begin{equation*}
  \loss\left(\frac{1}{\sqrt{n}} \cdot
  \sup_{g \in \tangentset}
  \frac{\nabla\functional(\theta_0)^T\E_0[\influencefunc(X)g(X)]}{
    \E_0[g(X)^2]^{1/2}}\right)
  = \loss\left(\frac{1}{\sqrt{n}} \E_0\left[
    \left(\nabla \functional(\theta_0)^T \influencefunc(X)\right)^2\right]^{1/2}
  \right),
\end{equation*}
where we have used that $\E_0[\influencefunc(X)] = 0$.  This information
bound is always (to numerical constants) smaller than the private
information bound in
Corollary~\ref{corollary:nonparametric-private-information}.


\subsection{Nonparametric modeling with exponential families}
\label{sec:mis-specified-expfam}

While Section~\ref{sec:examples} characterizes local minimax complexities
for several one-dimensional problems, treating one parameter exponential
families in Section~\ref{sec:one-param-expfams}, it relies on the model's
correct specification. Here, we consider estimating functionals of a
potentially mis-specified exponential family model.  To formally describe
the setting, we start with a $d$-parameter exponential family
$\{P_\theta\}_{\theta \in \Theta}$ with densities $p_\theta(x) =
\exp(\theta^Tx - A(\theta))$ with respect to some base measure $\mu$, where
for simplicity we assume that the exponential family is regular and minimal,
meaning that $\nabla^2 A(\theta) = \cov_\theta(X) \succ 0$ for all $\theta
\in \dom A$, and the log partition function $A(\theta)$ is analytic on the
interior of its domain~\cite[Thm.~2.7.1]{LehmannRo05}.  We record a few
standard facts on the associated convex analysis (for more, see the
books~\cite{Brown86, WainwrightJo08, HiriartUrrutyLe93ab}). Recall the
conjugate $A^*(x) \defeq \sup_{\theta} \{\theta^T x - A(\theta)\}$.
Then~\cite[cf.][Ch.~X]{HiriartUrrutyLe93ab}
\begin{equation}
  \label{eqn:conjugate-gradient-inverse}
  \nabla A^*(x) = \theta_x ~~\mbox{for~the~unique~} \theta_x ~
  \mbox{such that}~~ \E_{\theta_x}[X] = x.
\end{equation}
In addition, $\nabla A^*$ is continuously differentiable, one-to-one, and 
\begin{equation*}
  \dom A^* \supset \range(\nabla A(\cdot))
  = \{\E_\theta[X] \mid \theta \in \dom A\}.
\end{equation*}
Moreover, by the inverse function theorem, we also have that on the
interior of $\dom A^*$,
\begin{equation}
  \label{eqn:inverse-hessian-conjugate}
  \nabla^2 A^*(x)
  = (\nabla^2 A(\theta_x))^{-1}
  = \cov_{\theta_x}(X)^{-1}
  ~~ \mbox{for~the~unique~} \theta_x ~ \mbox{s.t.}~
  \E_{\theta_x}[X] = x.
\end{equation}
The uniqueness follows because $\nabla A^*$ is one-to-one, as
the exponential family is minimal and $\nabla^2 A(\theta)
\succ 0$.
For a distribution
$P$ with mean $\E_P[X]$, so long as the mean belongs to the
range of $\nabla A(\theta) = \E_\theta[X]$
as $\theta$ varies, the minimizer of the log loss
$\mlloss_\theta(x) = -\log p_\theta(x)$ is
\begin{equation*}
  \theta(P) \defeq \argmin_\theta
  \E_P[\mlloss_\theta(X)]
  = \nabla A^*(\E_P[X]).
\end{equation*}

We consider estimation of smooth functionals
$\functional : \R^d \to \R$ of the parameters $\theta$,
measuring the loss of an estimated value
$\what{\functional}$ by
\begin{equation*}
  \loss(\what{\functional} - \functional(\theta(P))),
\end{equation*}
where $\loss : \R \to \R_+$ as usual is quasi-convex and
symmetric. In the sequel, we show local lower bounds
on estimation, develop a (near) optimal regular estimator,
and contrast our results and the possibilities of adaptation
in private and non-private cases with somewhat striking differences.

\subsubsection{Private estimation rates}

We begin with a local minimax lower bound that
almost immediately follows Theorem~\ref{theorem:general-private}.

\begin{corollary}
  \label{corollary:mis-specified-expfam}
  Let $P_0$ be such that $\E_{P_0}[X] \in \interior
  (\range (\nabla A))$, and let $\Pallmodel$ be
  a collection of sub-models with scores $g$ dense in
  $\bigLone(P_0)$ at $P_0$.
  Let $\channeldistset_\diffp$ denote the collection of all $(2,
  \diffp^2)$-locally R\'{e}nyi private sequentially interactive channels.
  Then there exists $N = N(\Pallmodel, \functional)$ independent
  of the loss $\loss$ such that $n \ge N$ implies
  \begin{equation*}
    \localminimax_n(P_0, \loss, \Pallmodel, \channeldistset_\diffp)
    \ge \frac{1}{8} \cdot 
    \loss\left(\frac{1}{5\sqrt{2 n \diffp^2}}
    \cdot \esssup_{x}\left|\nabla \functional(\theta_0)^T
    \nabla^2 A(\theta_0)^{-1}
    (\E_{P_0}[X] - x) \right|\right).
  \end{equation*}
\end{corollary}
\begin{proof}
  The exponential family influence function is $\influencefunc(x) = \nabla^2
  A(\theta_0)^{-1} (x - \E_0[X])$~\cite[Ch.~25.3]{VanDerVaart98}.  Take $g$
  with $\frac{\nabla \functional(\theta_0)^T \E_0[\influencefunc(X)
      g(X)]}{\E_0[|g(X)|]} \ge \frac{3}{4} \esssup_x |\nabla
  \functional(\theta_0)^T \nabla^2 A(\theta_0)^{-1} (x - \E_0[X])|$ in
   Theorem~\ref{theorem:general-private}.
\end{proof}

Before we turn to private estimation, we compare
Corollary~\ref{corollary:mis-specified-expfam} to the non-private
case. The maximum likelihood estimator takes the sample mean
$\what{\mu}_n = \frac{1}{n} \sum_{i = 1}^n X_i$ and sets
$\what{\theta}_n = \nabla A^*(\what{\mu}_n)$. Letting $\theta_0 = \nabla
A^*(\E_{P_0}[X])$, Taylor expansion arguments and the
delta-method~\cite[Chs.~3--5]{VanDerVaart98} yield
\begin{equation*}
  \sqrt{n} (\what{\theta}_n - \theta_0)
  \cd \normal\left(0, \nabla^2 A(\theta_0)^{-1}
  \cov_0(X) \nabla^2 A(\theta_0)^{-1} \right)
\end{equation*}
and
\begin{equation*}
  \sqrt{n} (\functional(\what{\theta}_n)
  - \functional(\theta_0))
  \cd \normal\left(0, 
  \nabla \functional(\theta_0)^T \nabla^2 A(\theta_0)^{-1}
  \cov_0(X) \nabla^2 A(\theta_0)^{-1}
  \nabla \functional(\theta_0)\right),
\end{equation*}
and these estimators are regular (and hence locally uniform).
The lower bound in Corollary~\ref{corollary:mis-specified-expfam} is always
larger than this classical limit.
In this sense, the private lower bounds exhibit
both the importance of local geometry---via $\nabla^2 A(\theta_0)^{-1}
\nabla \functional(\theta_0)$---and the challenge of privacy
in addressing ``extraneous'' noise that must be privatized.
We will discuss this more in
Section~\ref{sec:adaptation-of-models}.

\subsubsection{An optimal one-step procedure}
\label{sec:one-step-mis-expfam}

An optimal procedure for functionals of (possibly) mis-specified exponential
family models is similar to classical one-step estimation
procedures~\cite[e.g.][Ch.~5.7]{VanDerVaart98}. 
To motivate the approach, let us assume
we have a ``good enough'' estimate $\wt{\mu}_n$ of $\mu_0 \defeq
\E_P[X]$. Then if $\wt{\theta}_n = \nabla A^*(\wt{\mu}_n)$,
we have
\begin{align}
  \functional(\theta_0)
  & = \functional(\wt{\theta}_n)
  + \nabla \functional(\wt{\theta}_n)^T (\theta_0 - \wt{\theta}_n)
  + O(\norms{\theta_0 - \wt{\theta}_n}^2) \nonumber \\
  & = \functional(\wt{\theta}_n)
  + \nabla \functional(\wt{\theta}_n)^T
  (\nabla A^*(\mu_0) - \nabla A^*(\wt{\mu}_n))
  + O(\norms{\mu_0 - \wt{\mu}_n}^2) \nonumber \\
  & = \functional(\wt{\theta}_n)
  + \nabla \functional(\wt{\theta}_n)^T
  \nabla^2 A(\wt{\theta}_n)^{-1}(\mu_0 - \wt{\mu}_n)
  + O(\norms{\mu_0 - \wt{\mu}_n}^2),
  \nonumber 
\end{align}
where each equality freely uses the duality
relationships~\eqref{eqn:conjugate-gradient-inverse}
and~\eqref{eqn:inverse-hessian-conjugate}.
In this case, if $\wt{\mu}_n - \mu_0 = o_P(n^{-1/4})$ and
we have an estimator $T_n$ satisfying
\begin{equation*}
  \sqrt{n}
  \left(T_n - 
  \nabla \functional(\wt{\theta}_n)^T \nabla^2 A(\wt{\theta}_n)^{-1}
  \mu_0\right)
  \cd \normal(0, \sigma^2),
\end{equation*}
then the estimator
\begin{equation}
  \what{\functional}_n
  \defeq \functional(\wt{\theta}_n)
  + T_n - \nabla \functional(\wt{\theta}_n)^T
  \nabla^2 A(\wt{\theta}_n)^{-1} \wt{\mu}_n
  \label{eqn:private-expfam-mis-estimator}
\end{equation}
satisfies $\sqrt{n} (\what{\functional}_n - \functional(\theta_0)) \cd
\normal\left(0, \sigma^2\right)$ by Slutsky's theorems.

We now exhibit such an estimator.  To avoid some of the difficulties
associated with estimation from unbounded data~\cite{DuchiJoWa18}, we assume
the domain $\xdomain \subset \R^d$ is the norm ball $\{x \in \R^d
\mid \norm{x} \le 1\}$.  For dual norm $\dnorm{z} = \sup_{\norm{x} \le 1}
x^T z$, the essential supremum in
Corollary~\ref{corollary:mis-specified-expfam}
thus has bounds
\begin{equation}
  \label{eqn:ess-sup-dual-equivalent}
  \esssup_x \left|\nabla \functional(\theta_0)^T
  \nabla^2 A(\theta_0)^{-1} \left(\E_{P_0}[X] - x\right)\right|
  \in \left[\half, 2 \right] \cdot
  \dnorm{\nabla^2 A(\theta_0)^{-1} \nabla \functional(\theta_0)}.
\end{equation}
Let us split the sample of size $n$ into two
sets of size $n_1 = \ceil{n^{2/3}}$ and $n_2 = n - n_1$.  For the first set,
let $Z_i$ be any $\diffp$-locally differentially private estimate of $X_i$
satisfying $\E[Z_i \mid X_i] = X_i$ and $\E[\norm{Z_i}^2] < \infty$, so that
the $Z_i$ are i.i.d.; for example, $X_i + W_i$ for a random vector of
appropriately large Laplace noise suffices~\cite{DworkMcNiSm06,DuchiJoWa18}.
Define $\wt{\mu}_n = \frac{1}{n_1} \sum_{i = 1}^{n_1} Z_i$, in which case
$\wt{\mu}_n - \mu_0 = O_P(n^{-1/3})$, and let $\wt{\theta}_n = \nabla
A^*(\wt{\mu}_n)$.  Now, for $i = n_1 + 1, \ldots, n$, define the
$\diffp$-differentially private
quantity
\begin{equation*}
  Z_i \defeq
  \nabla \functional(\wt{\theta}_n)^T
  \nabla^2 A(\wt{\theta}_n)^{-1} X_i
  + \frac{
    \dnorms{\nabla^2 A(\wt{\theta}_n)^{-1} \nabla \functional(\wt{\theta}_n)}
  }{\diffp}
  W_i
  ~~ \mbox{where} ~~
  W_i \simiid \laplace(1).
\end{equation*}
Letting $\wb{X}_{n_2} = \frac{1}{n_2} \sum_{i = n_1 + 1}^{n} X_i$
and similarly for $\wb{W}_{n_2}$ and $\wb{Z}_{n_2}$, we find that
\begin{align*}
  \lefteqn{\sqrt{n}
    \left(\wb{Z}_{n_2}
    - \nabla \functional(\wt{\theta}_n)^T \nabla^2 A(\wt{\theta}_n)^{-1}\mu_0
    \right)} \\
  & = \sqrt{n}
  \left[\nabla\functional(\wt{\theta}_n)^T
    \nabla^2 A(\wt{\theta}_n)^{-1}
    \left(\wb{X}_{n_2} - \mu_0\right)
    + \frac{ \dnorms{\nabla^2 A(\wt{\theta}_n)^{-1} \nabla \functional(\wt{\theta}_n)}}{\diffp}
    	 \wb{W}_{n_2}\right]
  \cd \normal(0, \sigma^2(P,\functional,\diffp))
\end{align*}
by Slutsky's theorem, where for
$\theta_0 = \nabla A^*(\E_P[X])$ we define
\begin{equation}
  \label{eqn:variance-private-expfam-mis-estimator}
  \sigma^2(P,\functional, \diffp)
  \defeq 
  \nabla \functional(\theta_0)^T \nabla^2 A(\theta_0)^{-1}
  \cov_P(X) \nabla^2 A(\theta_0)^{-1} \nabla \functional(\theta_0)
  + \frac{2}{\diffp^2}
  \dnorm{\nabla^2 A(\theta_0)^{-1} \functional(\theta_0)}^2.
\end{equation}
Moreover, the difference above is asymptotically
linear~\eqref{eqn:asymptotic-linearity},
so by continuity we have
\begin{equation*}
  \sqrt{n} (\what{\functional}_n - \functional(\theta_0))
  = \frac{1}{\sqrt{n}}
  \sum_{i = 1}^n \nabla \functional(\theta_0)^T \nabla^2 A(\theta_0)^{-1}
  (X_i - \mu_0)
  + \frac{\dnorm{\nabla^2 A(\theta_0)^{-1} \nabla \functional(\theta_0)}}{
    \diffp}
  \frac{1}{\sqrt{n}} \sum_{i = 1}^n W_i
  + o_{P_0}(1).
\end{equation*}

Summarizing, we can apply Lemma~\ref{lemma:regular-differentiability},
because the smoothness of $A(\cdot)$ means that the parameter $\theta_0$ is
regular in that it has influence function $\influencefunc(x) = \nabla^2
A(\theta_0)^{-1} (x - \mu_0)$ (recall also
Definition~\ref{definition:regular-risk}). Recalling the
equivalence~\eqref{eqn:ess-sup-dual-equivalent} between the dual norm
measures and essential supremum, we have thus shown that the two-step
estimator~\eqref{eqn:private-expfam-mis-estimator} is
locally minimax rate optimal.
\begin{proposition}
  \label{proposition:private-expfam-mis-estimator}
  Let $\what{\functional}_n$ be the
  estimator~\eqref{eqn:private-expfam-mis-estimator},
  $\{P_h\}$ be quadratic mean differentiable
  at $P_0$, $\theta_h = \argmax_\theta \E_{P_h}[\log p_\theta(X)]$,
  and $\sigma^2(P_0, \functional, \diffp)$ be as
  in~\eqref{eqn:variance-private-expfam-mis-estimator}.
  Let $Z \sim \normal(0, \sigma^2(P_0, \functional, \diffp)$
  and $h_n$ be a bounded sequence. Then
  $\sqrt{n} (\what{\functional}_n - \functional(\theta_{h_n/\sqrt{n}}))
  \cd Z$ under $X_i \simiid P_{h_n/\sqrt{n}}$, and
  for any bounded continuous $\loss$ and $c < \infty$,
  \begin{equation*}
    \lim_{n \to \infty} \sup_{\norm{h} \le c / \sqrt{n}}
    \E_{P_h}
    \left[\loss(\sqrt{n}(\what{\functional}_n -
      \functional(\theta_{h})))\right]
    = \E[\loss(Z)].
  \end{equation*}
\end{proposition}

\subsubsection{An extension to functionals of GLM parameters}
\label{sec:optimal-glm-experiments}

In our experiments, we will investigate the behavior of locally private
estimators for generalized linear models on a
variable $Y$ conditioned on $X$, where the model has the form
\begin{equation}
  \label{eqn:glm}
  p_\theta(y \mid x)
  = \exp\left(T(x, y)^T \theta - A(\theta \mid x)\right),
\end{equation}
where $A(\theta \mid x) = \int e^{T(x, y)^T \theta} d\basemeasure(y)$ for
some base measure $\basemeasure$ and $T : \mc{X} \times \mc{Y} \to \R^d$ is
the sufficient statistic.
We assume the distribution $\Px$ on $X$ is known. This assumption is
strong, but may (approximately) hold in practice; in biological
applications, for example, we may have covariate data
and wish to estimate the conditional distribution of $Y \mid X$ for a new
outcome $Y$~\cite[e.g.][]{CandesFaJaLv18}.
For a distribution $P$ on the pair $(X, Y)$,
let $\Px$ denote the marginal over $X$, which we assume is fixed and known,
$\Pyx$ be the conditional distribution over $Y$ given $X$,
and $P = \Pyx \Px$ for shorthand.
Define the population risk using the log loss
$\mlloss_\theta(y \mid x)
= -\log p_\theta(y \mid x)$, by
\begin{equation*}
  \risk_P(\theta) = \E_P[\mlloss_\theta(Y \mid X)]
  = \E_P[-T(X, Y)]^T \theta
  + \E_{\Px}[A(\theta \mid X)]
  = -\E_P[T(X, Y)]^T \theta + A_{\Px}(\theta),
\end{equation*}
where we use the shorthand $A_{\Px}(\theta) \defeq \E_{\Px}[A(\theta \mid
  X)]$. Let $\PfamY$ be a collection of conditional distributions
of $Y \mid X$, and for $\Pyx \in \PfamY$, we analogize the
general exponential family case to define
\begin{equation*}
  \theta(\Pyx)
  \defeq \argmin_\theta \risk_{\Pyx \Px}(\theta)
  = \nabla A_{\Px}^*(\E_{\Pyx \Px}[T(X, Y)]).
\end{equation*}
Considering again the loss
$\loss(\what{\functional} - \functional(\theta(\Pyx))$
for a smooth functional $\functional$,
Corollary~\ref{corollary:mis-specified-expfam} implies
\begin{corollary}
  \label{corollary:glm-lower-bound}
  Let $\PfamY$ be a collection of conditional distributions on $Y \mid X$,
  $P_0 \in \PfamY$, and $\channeldistset_\diffp$ be the collection of $(2,
  \diffp^2)$-R\'{e}nyi-private channels (Def.~\ref{definition:local-renyi}).
  Then for numerical constants $c_0, c_1 > 0$ there
  exists $N = N(\PfamY, \functional)$ independent of the loss $\loss$
  such that $n \ge N$ implies
  \begin{equation*}
    \localminimax_n(P_0, \loss, \PfamY, \channeldistset_\diffp)
    \ge c_0
    \sup_{\Pyx \in \PfamY}
    \loss\left(c_1 \frac{\nabla \functional(\theta_0)^T
      \nabla^2 A_{\Px}(\theta_0)^{-1}
      (\E_{P_0 \Px}[T(X, Y)] - \E_{\Pyx \Px}[T(X, Y)])}{
      \sqrt{n \diffp^2}}\right).
  \end{equation*}
\end{corollary}
\noindent
If the set $\PfamY$ and distribution $\Px$ are such that
$\{\E_{\Pyx \Px}[T] \mid \Pyx \in \PfamY\} \supset \{t \in \R^d
\mid \norm{t} \le r\}$,
then we have the simplified lower bound
\begin{equation*}
  \localminimax_n(P_0, L, \PfamY, \channeldistset_\diffp)
  \ge c_0
  \loss\Big(c_1 \frac{r \dnorm{\nabla^2 A_{\Px}(\theta_0)^{-1} \nabla
      \functional(\theta_0)}}{
    \sqrt{n \diffp^2}}\Big).
\end{equation*}

An optimal estimator parallels
Section~\ref{sec:one-step-mis-expfam}.  Split
a non-private sample
$\{(X_i, Y_i)\}_{i=1}^n$ into samples of size $n_1 =
\ceil{n^{2/3}}$ and $n_2 = n - n_1$.
For $i = 1, \ldots, n_1$,
let $Z_i$ be any $\diffp$-locally differentially private
estimate of $T(X_i, Y_i)$ with $\E[Z_i \mid X_i, Y_i] = T(X_i, Y_i)$ and
$\E[\norm{Z_i}^2] < \infty$, and define
$\wt{\mu}_n = \wb{Z}_{n_1} = \frac{1}{n_1} \sum_{i = 1}^{n_1} Z_i$
and
$\wt{\theta}_n = \nabla A^*_{\Px}(\wt{\mu}_n)
= \argmin_\theta \{-\wt{\mu}_n^T \theta + A_{\Px}(\theta)\}$.
Then, for $i = n_1 + 1, \ldots, n$, let
\begin{equation*}
  Z_i = \nabla \functional(\wt{\theta}_n)^T
  \nabla^2 A_{\Px}(\wt{\theta}_n)^{-1} T(X_i, Y_i)
  + \frac{r \dnorms{\nabla^2 A_{\Px}(\wt{\theta}_n)^{-1} \nabla
  \functional(\wt{\theta}_n)}}{\diffp}
  W_i
  ~~ \mbox{where} ~~
  W_i \simiid \laplace(1),
\end{equation*}
The $Z_i$ are evidently $\diffp$-differentially
private, and
we then define the private estimator
\begin{equation}
  \label{eqn:private-glm-estimator}
  \what{\functional}_n
  \defeq \wb{Z}_{n_2} + \nabla\functional(\wt{\theta}_n)^T \left(\wt{\theta}_n
  - \nabla^2 A_{\Px}(\wt{\theta}_n)^{-1} \wt{\mu}_n\right).
\end{equation}
An identical analysis to that we use to prove
Proposition~\ref{proposition:private-expfam-mis-estimator} then gives the
following corollary, which shows a locally uniform optimal rate of
convergence.  (We use the shorthand $\norm{x}_C^2 = x^T C x$.)
\begin{corollary}
  \label{corollary:local-glm-asymptotics}
  Let $\what{\functional}_n$ be the
  estimator~\eqref{eqn:private-glm-estimator} and
  $\theta_0 = \nabla A_{\Px}^*(\E_P[T(X, Y)])
  = \argmin_\theta \risk_P(\theta)$. Then
  \begin{equation*}
    \sqrt{n} (\what{\functional}_n - \functional(\theta_0))
    \cd \normal\left(0,
    \norm{\nabla^2 A_{\Px}(\theta_0)^{-1} v}_{\cov(T(X, Y))}^2
    + \frac{2}{\diffp^2}
    \dnorm{\nabla^2 A_{\Px}(\theta_0)^{-1} v}^2\right),
  \end{equation*}
  and convergence is locally uniform over QMD submodels.
\end{corollary}

\newcommand{\sub}{{\rm sub}}
\newcommand{\full}{{\rm full}}
\newcommand{\Gset}{{\mathcal G}}

\subsection{Model adaptation in locally private exponential family estimation}
\label{sec:adaptation-of-models}

\newcommand{\Pnonpara}{\mc{P}_{\nonpara}}
\newcommand{\Ppara}{\mc{P}_{\para}}
\newcommand{\losstrunc}{L_{\wedge B}}

We conclude this section by highlighting a phenomenon that distinguishes
locally private estimation from non-private estimation, focusing especially
on exponential families as in Section~\ref{sec:mis-specified-expfam}.
We recall~\citet{Stein56a}, who roughly asks the following:
given a parameter $\theta$ of interest and a (potentially) infinite
dimensional nuisance $G$, can we estimate $\theta$ asymptotically as well
regardless of whether we know $G$?  Here, we consider this in the context of
$G$ being the full distribution $P_0$, and we delineate cases---which depend
on the channel set $\channeldistset$ being either the identity (non-private)
or a private collection---when for a sub-family $\Pmodel \subset \mc{P}$
containing $P_0$, we have
\begin{equation}
  \label{eqn:problem-of-adaptation}
  \localminimax_n(P_0, \loss, \mc{P}, \channeldistset)
  \asymp
  \localminimax_n (P_0, \loss, \Pmodel, \channeldistset). 
\end{equation}
For exponential families,
the non-private local minimax risk is (up to constants)
independent of whether the containing family $\mc{P}$ of distributions is
parametric or non-parametric, while the private local minimax risk is larger
in the non-parametric than parametric settings, necessitating the
construction of distinct private estimators with different optimality
properties that depend on the overall model the statistician is willing to
assume.

For simplicity we study one-dimensional potentially misspecified models with
densities $p_\theta(x) = \exp(\theta x - A(\theta))$ and base measure $\mu$.
We consider nonparametric and parametric families, making an assumption (for
convenience) that the first has uniformly bounded (arbitrary) fourth moment:
\begin{equation*}
  \Pnonpara \defeq \left\{P : \E_P[X] \in \range(\nabla A),
  \E_P[|X|^4] \le M < \infty\right\}
  ~~ \mbox{and} ~~
  \Ppara \defeq \{P_\theta\}_{\theta \in \Theta}.
\end{equation*}
To avoid issues of infinite loss, we use the truncated squared error
$\losstrunc(\theta - \theta(P))
= (\theta - \theta(P))^2 \wedge B$, where $0 < B <
\infty$ is otherwise arbitrary.

To compare the private and non-private cases, we evaluate their local
minimax risks. In the non-private case, the model class is immaterial, as
the efficient influence and score functions for exponential families are
identical in both parametric and nonparametric
cases~\cite[Ch.~25.3]{VanDerVaart98}, so we have the
equivalence~\eqref{eqn:problem-of-adaptation} when $\channeldistset =
\{\textup{id}\}$.  We prove the following characterization in
Section~\ref{sec:proof-non-private-mis-specified}.
\begin{claim}
  \label{claim:non-private-mis-specified}
  Let $P_0 = P_{\theta_0}$ belong to the exponential family above.
  Then for large enough $n$,
  \begin{equation*}
    \localminimax_n(P_0, \losstrunc, \Pnonpara, \{\textup{id}\})
    \asymp \localminimax_n(P_0, \losstrunc, \Ppara, \{\textup{id}\})
    \asymp \frac{1}{n \var_0(X)}.
  \end{equation*}
\end{claim}
\noindent
The
risks, by comparison, have different behavior, as
the discussion below shows.
\begin{claim}
  \label{claim:private-mis-specified}
  Let $P_0 = P_{\theta_0}$ belong to the exponential family above.
  Let $\channeldistset_\diffp$ be the collection
  of $(2, \diffp^2)$-R\'{e}nyi differentially private
  channels and $\Pallmodel \subset \Pnonpara$ be a collection of
  sub-models with scores $g$ dense in $\bigLone(P_0)$ at $P_0$.
  Then there exist numerical constants
  $0 < c_0 \le c_1 < \infty$ such that for large enough $n$,
  \begin{subequations}
    \begin{equation}
      \label{eqn:private-nonparametric-specified}
      \localminimax_n(P_0, \losstrunc, \Pallmodel, \channeldistset_\diffp)
      \ge c_0 \cdot\frac{1}{n \diffp^2}
      \esssup_x \frac{(\E_{P_0}[X] - x)^2}{\var_0(X)^2}
    \end{equation}
    and
    \begin{equation}
      \label{eqn:private-parametric-specified}
      \localminimax_n(P_0, \losstrunc, \Ppara, \channeldistset_\diffp)
      \in [c_0, c_1] \cdot \frac{1}{n \diffp^2}
      \cdot \frac{1}{\E_0[|X - \E_0[X]|]^2}.
    \end{equation}
  \end{subequations}
  Additionally,
  $1 / \E_0[|X - \E_0[X]|]^2 \le \esssup_x (\E_0[X] - x)^2 / \var_0(X)^2$.
\end{claim}

An alternative way to view (and prove) the right-hand (variance) quantities
in the claims is via influence and score functions. The efficient influence
function in exponential families~\cite[Ch.~25.3]{VanDerVaart98} is
$\influencefunc(x) = (x - \E_0[X]) / \var_0(X) = (x - A'(\theta_0)) /
A''(\theta_0)$, with the second equality following because $P_0 =
P_{\theta_0}$ by assumption.  The asymptotic
variance~\cite[Ex.~25.16]{VanDerVaart98} in the non-private case becomes
\begin{equation*}
  \sup_{g \in \bigLtwo(P_0)}
  \frac{\E_0[\influencefunc(X) g(X)]^2}{\E_0[g(X)^2]}
  = \normbig{\influencefunc(X)}^2_{\bigLtwo(P_0)}
  = \frac{1}{\var_0(X)},
\end{equation*}
attaining the supremum at the parametric
score $\score_{\theta_0}(x) = x - \E_0[X]$.
In the private case,
we have
\begin{equation*}
  \sup_{g}
  \frac{\E_0[\influencefunc(X) g(X)]^2}{\E_0[|g(X)|]^2}
  = \esssup_x \frac{(\E_0[X] - x)^2}{\var_0(X)^2}
  ~~~ \mbox{while} ~~~ 
  \frac{\E_0[\influencefunc(X) \score_{\theta_0}(X)]^2}{
    \E_0[|\score_\theta(X)|]^2}
  = \frac{1}{\E_0[|X - \E_0[X]|]^2}
\end{equation*}
as in inequalities~\eqref{eqn:private-nonparametric-specified}
and~\eqref{eqn:private-parametric-specified}, respectively.  (Applying
Corollary~\ref{corollary:mis-specified-expfam} thus demonstrates the lower
bound~\eqref{eqn:private-nonparametric-specified}, and the preceding display
also gives the final result in Claim~\ref{claim:private-mis-specified}.  For
the bounds~\eqref{eqn:private-parametric-specified}, use
Proposition~\ref{proposition:information-private-lb} and
Corollary~\ref{corollary:l1-info-attained} with score $\score_\theta(x) = x
- A'(\theta)$.)  This contrast shows how the worst score $g$ in the
nonparametric case depends strongly on whether we have privacy or not; in
the latter, it is simply $g = \score_{\theta_0}$, while in the former, the
structure is quite different.






\section{Experiments on a flow cytometry dataset}
\label{sec:experiments}

We perform experiments investigating the behavior of
our proposed locally optimal estimators, comparing their performance both to
non-private estimators and to minimax optimal estimators developed by
\citet{DuchiJoWa18} for locally private estimation.
We consider the
generalized linear model~\eqref{eqn:glm} and
estimating the linear
functional $\functional(\theta) = v^T \theta$.
As motivation, consider the
problem of testing whether a covariate $X_j$ is relevant to a binary outcome
$Y \in \{-1, 1\}$. In this case, the logistic GLM model~\eqref{eqn:glm}
is $p_\theta(y \mid x) = \exp(yx^T \theta) / (1 + \exp(y x^T
\theta))$, and using the standard basis vectors $v = e_j$,
estimating $v^T
\theta$ corresponds to testing $\theta_j \lessgtr 0$ while
controlling for the other covariates.

\newcommand{\thetamle}[1]{\theta_{\rm ml}^{({#1})}}
\newcommand{\thetasgm}[1]{\what{\theta}_{\rm sg}^{({#1})}}
\newcommand{\thetainit}[1]{\what{\theta}_{\rm init}^{({#1})}}
\newcommand{\thetaonestep}[1]{\what{\theta}_{\rm os}^{(#1)}}

We investigate the performance of the locally private one-step
estimator~\eqref{eqn:private-glm-estimator} on a flow-cytometry dataset for
predicting protein expression~\cite[Ch.~17]{HastieTiFr09}, comparing against
(global) minimax optimal stochastic gradient estimators~\cite{DuchiJoWa18}.
The flow-cytometry dataset contains expression level measurements of
$d = 11$ proteins on $n = 7466$ cells, and the goal is to understand the
network structure linking the proteins: how does protein $j$'s expression level
depend on the remaining proteins. As the raw data is heavy-tailed
and skewed, we perform an inverse tangent transformation $x_{ij} \mapsto
\tan^{-1} (x_{ij})$.  Letting $X \in \R^{n \times d}$ be the data
matrix, to compare the methods and to guarantee a ground truth in
our experiments, we treat $X$ as the \emph{full population}, so each
experiment consists of sampling rows of $X$ with replacement.

Let $x \in \R^d$ denote a row of $X$. For $i \in [d]$, we
wish to predict  $y = \sign(x_i)$
based on $x_{-i} \in \R^{d-1}$, the remaining
covariates, and we use the logistic regression model
\begin{equation*}
  \log \frac{P_\theta(\sign(x_i) = 1 \mid x_{-i})}{
    P_\theta(\sign(x_i) = -1 \mid x_{-i})}
  = \theta^T x_{-i} + \theta_{\rm bias},
\end{equation*}
so that $T(x_{-i}, y) = y [x_{-i}^T ~ 1]^T$ and $A(\theta \mid x_{-i}) =
\log(e^{\theta^T x_{-i} + \theta_{\rm bias}} + e^{-\theta^T x_{-i} -
  \theta_{\rm bias}})$, where $y = \sign(x_i)$ is the sign of the expression
level of protein $i$.
We let
$\thetamle{i} \in \R^d$ be the parameter (including the bias) maximizing
the likelihood for this logistic model of predicting $x_i$
using the full data $X$.

We perform multiple experiments, where each is as follows.  We sample $N$
rows of $X$ uniformly (with replacement) and
vary the privacy parameter in
$\diffp \in \{1, 4\}$. We perform perform two private
procedures (and one non-private procedure) on the resampled data
$X_{\text{new}} \in \R^{N \times d}$:
\begin{enumerate}[(i)]
\item The non-private maximum likelihood estimator (MLE) on the
  resampled data of size $N$.
\item The minimax optimal stochastic gradient
  procedure of \citet[Secs.~4.2.3 \& 5.2]{DuchiJoWa18}. In brief, this
  procedure begins from $\theta^0 = 0$, and at iteration $k$ draws a pair
  $(x,y)$ uniformly at random, then uses a carefully designed
  $\diffp$-locally private version $Z^k$ of $T = T(x,y)$ with the property
  that $\E[Z \mid x,y] = T(x,y)$ and $\sup_k \E[\norms{Z^k}^2] < \infty$,
  updating
  \begin{equation*}
    \theta^{k + 1} = \theta^k - \eta_k \left(\nabla A_{\Px}(\theta^k) - Z^k\right),
  \end{equation*}
  where $\eta_k > 0$ is a stepsize sequence. (We use optimal the $\ell_\infty$
  sampling mechanism~\cite[Sec.~4.2.3]{DuchiJoWa18} to construct $Z_i$.)
  We use stepsizes $\eta_k = 1 / (20 \sqrt{k})$, which gave optimal
  performance over many choices of stepsize and power $k^{-\beta}$.
  We perform $N$ steps of this stochastic gradient method,
  yielding estimator $\thetasgm{i}$ for prediction of protein $i$ from
  the others.
\item The one-step corrected
  estimator~\eqref{eqn:private-glm-estimator}.  To construct the initial
  $\wt{\theta}_n$, we use \citeauthor{DuchiJoWa18}'s $\ell_\infty$
  sampling mechanism to construct the approximation $\wt{\mu}_{n} =
  \frac{1}{n_1} \sum_{i=1}^n Z_i$ and let $\thetainit{i} = \wt{\theta}_n
  = \nabla A^*(\Px)(\wt{\mu}_n)$. For
  coordinates $i = 1, \ldots, d$, we set
  $\functional(\theta) = v^T \theta$ for $v = e_1, \ldots, e_d$
  as in~\eqref{eqn:private-glm-estimator}.
\end{enumerate}
\noindent
We perform each of these three-part tests $T = 100$ times,
where within each test, each method uses an identical sample
(the samples are of course independent across tests).

We summarize our results in Figure~\ref{fig:violin-results} and
Table~\ref{table:who-wins}. Figure~\ref{fig:violin-results}
plots the errors across all coordinates of $\thetamle{i}$, $i = 1,
\ldots, d$, and all $T = 100$ tests of the three procedures,
with top whisker at the 99th
percentile error for each.
We vary sample sizes $N \in \{ 2n, 8n, 40n\}$ and
privacy level $\diffp \in \{1, 4\}$; results remain consistent for other
sample sizes. As
the sample size (or $\diffp$) grows, the one-step estimator converges
more quickly than the minimax stochastic gradient procedure,
though for the smaller sample size the private SGD method exhibits
better performance.

\begin{figure}[t]
  \begin{center}
    \begin{tabular}{cc}
      \includegraphics[width=.48\columnwidth]{%
        Data/violin-nmult_2-epsilon_1-comparison}
      &
      \includegraphics[width=.48\columnwidth]{%
        Data/violin-nmult_2-epsilon_4-comparison} \\
      (a) $N = 2n$, $\diffp = 1$ & (b) $N = 2n$, $\diffp = 4$ \\
      \includegraphics[width=.48\columnwidth]{%
        Data/violin-nmult_8-epsilon_1-comparison}
      &
      \includegraphics[width=.48\columnwidth]{%
        Data/violin-nmult_40-epsilon_1-comparison} \\
      (c) $N = 8n$, $\diffp = 1$ & (d)
      $N = 40n$, $\diffp = 1$
    \end{tabular}
    \caption{\label{fig:violin-results} Errors
      $|\what{\functional}_N - v^T \thetamle{i}|$
      across all experiments, for $v = e_1, \ldots, e_d$
      and $i = 1, \ldots, d$, in the
      logistic regression model, with medians and interquartile ranges marked.}
  \end{center}
  \vspace{-.5cm}
\end{figure}


In Table~\ref{table:who-wins}, we compare the estimators $\thetainit{i}$,
$\thetasgm{i}$, and $\thetaonestep{i}$ of the true parameter $\thetamle{i}$
more directly. For each, we count the number of experiments (of $T$) and
indices $j = 1, \ldots, d$ for which
\begin{equation*}
  \left|[\thetaonestep{i}]_j - [\thetamle{i}]_j\right|
  < \left|[\thetainit{i}]_j - [\thetamle{i}]_j\right|
  ~~ \mbox{and} ~~
  \left|[\thetaonestep{i}]_j - [\thetamle{i}]_j]\right|
    < \left|[\thetasgm{i}]_j -
    [\thetamle{i}]_j\right|,
\end{equation*}
that is, the number of experiments in which the one-step estimator provides
a better estimate than its initializer or the minimax stochastic
gradient-based procedure.  Table~\ref{table:who-wins} shows these results,
displaying the proportion of experiments in which the one-step method has
higher accuracy than the other procedures.  For large sample
sizes, the asymptotic optimality of the one-step appears to
be salient, as its performance relative to the other methods improves.
Based on
additional simulations, it appears that the initializer
$\thetainit{i}$ is inaccurate for small sample sizes,
so the one-step correction has poor
Hessian estimate and performs poorly.
The full minimax procedure~\cite{DuchiJoWa18} adds more noise than
is necessary, as it privatizes the entire statistic $xy$ in each
iteration---a necessity because it iteratively builds the estimates
$\thetasgm{\cdot}$---causing an increase in sample complexity.

\begin{table}[t]
  \begin{center}
    \begin{tabular}{|c||c|c|c|c|c|c|}
      \hline
      Sample size & \multicolumn{2}{|c|}{$N = 2n$} &
      \multicolumn{2}{|c|}{$N = 8n$} &
      \multicolumn{2}{|c|}{$N = 40n$} \\
      \hline
      Privacy $\diffp$ & $\diffp = 1$ & $\diffp = 4$ & $\diffp = 1$
      & $\diffp = 4$ & $\diffp = 1$ & $\diffp = 4$  \\
      \hline
      vs.\ initializer & 0.501 & 0.82 & 0.791 & 0.848 & 0.825 & 0.852 \\
      \hline
      vs.\ minimax (stochastic gradient) & 0.321 & 0.677 & 0.659 & 0.79
      & 0.777 & 0.817 \\
      \hline
    \end{tabular}
    \caption{\label{table:who-wins}
      Frequency with which the one-step estimator outperforms
      initialization and minimax (stochastic-gradient-based) estimator
      over $T = 100$ tests, all coordinates $j$ of the parameter
      and proteins $i = 1, \ldots, d$ for the flow-cytometry data.}
  \end{center}
\end{table}

\newcommand{\centraldiffp}{\diffp_{\textup{cen}}}

The one-step correction typically outperforms alternative approaches in
large-sample regimes, and such large samples may be more effectively
achievable than is \emph{prima facie} obvious, as locally private procedures
can guarantee strong central differential privacy.
\citet{ErlingssonFeMiRaTaTh19} consider privacy amplification in the
\emph{shuffle model}, where the data $\{X_i\}$ are permuted
before the sampling $Z_i \sim \channel(\cdot \mid X_i, Z_{1:i-1})$;
other variants~\cite{BalleBeGaNi19}
randomize and then permute the $Z_i$ into $Z_{\pi(1:n)} \in
\mc{Z}^n$.  The permuted vector $Z_{\pi(1:n)}$ then
achieves $(\centraldiffp, \delta)$-differential privacy~\cite[Corollary
  5.3.1]{BalleBeGaNi19} for
\begin{equation*}
  \centraldiffp = O(1) e^\diffp \sqrt{\frac{\min\{1, \diffp^2\} \log
      \frac{1}{\delta}}{n}}.
\end{equation*}
Applying this randomize-then-shuffle approach $K \ll n$ distinct times,
whenever $\diffp = O(1)$, composition bounds for differential
privacy~\cite[Ch.~3.5.2]{DworkRo14} guarantee $(\centraldiffp, \delta)$-central
differential privacy for $\centraldiffp = O(1) \sqrt{\frac{K \diffp^2 \log
    \delta^{-1}}{n}}$.  The one-step
estimator~\eqref{eqn:private-glm-estimator} falls in this framework, and
(via a calculation) achieves $\centraldiffp \le 1$ for $N = 40n$, $\diffp =
1$.  Consequently, this type of behavior may be acceptable in natural
local privacy applications: situations (such as web-scale
data) with large sample sizes or where resampling is possible, as we may
achieve both strong privacy and reasonable performance.

\section{Proofs of main results}

\label{sec:proofs}

We collect the proofs of our main results in this section, as they are
reasonably brief and (we hope) elucidating.  The main technical tool
underpinning our lower bounds is that our definitions of privacy imply
strong contractions on the space of probability measures.  Such contractive
properties have been important in the study of information channels and
\emph{strong data processing}~\cite{CohenKeZb98,DelMoralLeMi03}
and in the mixing properites of Markov chains under
so-called \emph{strong mixing conditions}, such as the Dobrushin
condition~\cite{Dobrushin56}. Consequently, before turning to the main
proofs, we first present a few results on contractions of probability
measures, as they underly our subsequent development.

\subsection{Contractions of probability measures}
\label{sec:contraction-probability-measures}

We provide our contractions using $f$-divergences.  For a convex function $f
: \R_+ \to \R \cup \{+\infty\}$ with $f(1) = 0$, the $f$-divergence between
distributions $P$ and $Q$ is
\begin{equation*}
  \fdiv{P}{Q} \defeq \int f\left(\frac{dP}{dQ}\right) dQ,
\end{equation*}
which is non-negative and strictly positive when $P \neq Q$ and $f$ is
strictly convex at the point $1$. We typically consider $f$-divergences
parameterized by $k \in \openright{1}{\infty}$ of the form
\begin{equation*}
  f_k(t) \defeq |t - 1|^k.
\end{equation*}

Given a channel $\channel$, for $a \in \{0, 1\}$, define the marginal
distributions
\begin{equation*}
  \marginprob_a(S) \defeq \int \channel(S \mid x)
  dP_a(x).
\end{equation*}
The goal is then to provide upper bounds on the $f$-divergence
$\fdiv{\marginprob_0}{\marginprob_1}$ in terms of the channel $\channel$;
the standard data-processing inequality~\cite{CoverTh06,LieseVa06}
guarantees $\fdiv{\marginprob_0}{\marginprob_1} \le
\fdiv{P_0}{P_1}$.  Dobrushin's celebrated
ergodic coefficient $\alpha(\channel) \defeq 1 - \sup_{x, x'}
\tvnorm{\channel(\cdot \mid x) - \channel(\cdot \mid x')}$
guarantees that
for any
$f$-divergence (see~\cite{CohenKeZb98,DelMoralLeMi03}),
\begin{equation}
  \label{eqn:strong-data-processing}
  \fdiv{\marginprob_0}{\marginprob_1}
  \le \sup_{x, x'}
  \tvnorm{\channel(\cdot \mid x) - \channel(\cdot \mid x')}
  \fdiv{P_0}{P_1}.
\end{equation}
Thus, as long as the Dobrushin coefficient is strictly positive, one
obtains a strong data processing inequality.  In our case, our privacy
guarantees provide a stronger condition than the positivity of the Dobrushin
coefficient. Consequently, we are able to provide substantially stronger data
processing inequalities: we can even show that it is possible to modify the
underlying $f$-divergence.

We have the following proposition, which provides a strong data processing
inequality for all channels that are uniformly close under the
polynomial $f$-divergences with $f_k$.

\begin{proposition}
  \label{proposition:fk-contractions}
  Let $f_k(t) = |t - 1|^k$ for some $k > 1$, and let
  $P_0$ and $P_1$ be arbitrary distributions on a common
  space $\mc{X}$. Let $\channel$ be a Markov kernel
  from $\mc{X}$ to $\mc{Z}$
  satisfying
  \begin{equation*}
    \fdivf{f_k}{\channel(\cdot \mid x)}{\channel(\cdot \mid x')}
    \le \diffp^k
  \end{equation*}
  for all $x, x' \in \mc{X}$
  and $\marginprob_a(\cdot) = \int \channel(\cdot \mid x) dP_a(x)$.
  Then
  \begin{equation*}
    \fdivf{f_k}{\marginprob_0}{\marginprob_1}
    \le (2 \diffp)^k
    \tvnorm{P_0 - P_1}^k.
  \end{equation*}
\end{proposition}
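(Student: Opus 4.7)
The plan is to reduce the bound to the per-point privacy hypothesis using two ingredients: Hölder's inequality against the signed measure $\nu := P_0 - P_1$, combined with the trick of centering the integrand by a function of $z$ alone, and convexity of $f$-divergences in the second argument to reconstitute the mixture marginal $\marginprob_1$.

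Fix a dominating measure $\mu$ and set $q(\cdot \mid x) = d\channel(\cdot \mid x)/d\mu$ and $m_a = d\marginprob_a/d\mu$, so $m_a(z) = \int q(z \mid x)\, dP_a(x)$. Since $\nu(\mathcal{X}) = 0$, for any function $c(z)$ not depending on $x$,
\begin{equation*}
  m_0(z) - m_1(z) = \int q(z \mid x)\, d\nu(x) = \int [q(z \mid x) - c(z)]\, d\nu(x).
\end{equation*}
I choose $c(z) = m_1(z)$: this is precisely the centering that produces $\fdivf{f_k}{\channel(\cdot \mid x)}{\marginprob_1}$ after dividing by $m_1(z)^{k-1}$. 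Applying Hölder's inequality with conjugate exponents $k$ and $k' = k/(k-1)$ against the positive total-variation measure $|\nu|$ (pairing $|\cdot|$ with the constant $1$) then gives, using $k/k' = k-1$,
\begin{equation*}
  |m_0(z) - m_1(z)|^k \le |\nu|(\mathcal{X})^{k-1} \int |q(z \mid x) - m_1(z)|^k\, d|\nu|(x).
\end{equation*}
Dividing by $m_1(z)^{k-1}$, integrating in $z$, and using Fubini recognizes the inner integral as an $f_k$-divergence and yields
\begin{equation*}
  \fdivf{f_k}{\marginprob_0}{\marginprob_1} \le |\nu|(\mathcal{X})^{k-1} \int \fdivf{f_k}{\channel(\cdot \mid x)}{\marginprob_1}\, d|\nu|(x).
\end{equation*}

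For the final step, since $\marginprob_1 = \int \channel(\cdot \mid y)\, dP_1(y)$ is a mixture and $\marginprob_1 \mapsto \fdivf{f_k}{\channel(\cdot \mid x)}{\marginprob_1}$ is convex, Jensen's inequality yields
\begin{equation*}
  \fdivf{f_k}{\channel(\cdot \mid x)}{\marginprob_1} \le \int \fdivf{f_k}{\channel(\cdot \mid x)}{\channel(\cdot \mid y)}\, dP_1(y) \le \diffp^k
\end{equation*}
uniformly in $x$ by the privacy hypothesis. Since $|\nu|(\mathcal{X}) = 2\tvnorm{P_0 - P_1}$, combining gives $\fdivf{f_k}{\marginprob_0}{\marginprob_1} \le |\nu|(\mathcal{X})^{k-1} \cdot |\nu|(\mathcal{X}) \cdot \diffp^k = (2\diffp \tvnorm{P_0 - P_1})^k$, which is the claim.

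The main conceptual obstacle is recognizing that centering $c(z) = m_1(z)$ is essential: the more direct joint-convexity argument applied to $(\marginprob_0, \marginprob_1) = \int (\channel(\cdot \mid x_0), \channel(\cdot \mid x_1))\, d\pi(x_0, x_1)$ with $\pi$ a maximal coupling of $P_0$ and $P_1$ only gives the weaker bound $\fdivf{f_k}{\marginprob_0}{\marginprob_1} \le \diffp^k \tvnorm{P_0 - P_1}$, linear in the TV distance rather than raised to the $k$-th power. Centering against $m_1$ before applying Hölder is exactly what pairs the additional Hölder factor $|\nu|(\mathcal{X})^{1/k'}$ with the $|\nu|(\mathcal{X})^{1/k}$ coming from integrating $d|\nu|$, producing the sharp $(2\tvnorm{P_0 - P_1})^k$ dependence.
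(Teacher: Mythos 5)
Your proof is correct and takes a genuinely different route from the paper's, though the two are essentially dual. The paper first applies Jensen's inequality to the convex map $t \mapsto t^{1-k}$ to replace the mixture denominator $\margindens_1(z)^{1-k}$ by $\int \channeldens(z \mid x_0)^{1-k}\,dP_1(x_0)$, then centers the difference $\margindens_0 - \margindens_1$ by the single-point channel density $\channeldens(z \mid x_0)$, and finishes with Minkowski's integral inequality against $|dP_0 - dP_1|$; the privacy hypothesis applies directly because the inner quantity is already the pointwise divergence $\fdivf{f_k}{\channel(\cdot\mid x)}{\channel(\cdot\mid x_0)}$. You instead leave the denominator alone, center the numerator by the mixture density $m_1(z)$ itself, apply H\"older against $|\nu| = |P_0-P_1|$ to peel off the prefactor $|\nu|(\mathcal{X})^{k-1}$, and at the end invoke convexity of the $f$-divergence in its second argument to pass from $\fdivf{f_k}{\channel(\cdot\mid x)}{\marginprob_1}$ to the pointwise quantity. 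The two proofs thus trade a convexity argument on $t^{1-k}$ applied \emph{before} centering (paper) for a convexity argument on $Q \mapsto \fdivf{f_k}{P}{Q}$ applied \emph{after} the H\"older step (yours); both rely crucially on inserting a function of $z$ alone into the integral against $\nu$, which is exactly what turns a linear dependence on variation distance into the sharp power-$k$ dependence. Your closing remark explaining why the naive coupling argument stalls at $\diffp^k\tvnorm{P_0-P_1}$ accurately pinpoints why some such centering plus $L^k$ inequality is unavoidable.
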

\noindent
See Section~\ref{sec:proof-fk-contractions} for a proof.

Jensen's inequality implies that $2^k \tvnorm{P_0 - P_1}^k \le
\fdivf{f_k}{P_0}{P_1}$, so Proposition~\ref{proposition:fk-contractions}
provides a stronger guarantee than the classical
bound~\eqref{eqn:strong-data-processing} for the specific divergence
associated with $f_k(t) = |t - 1|^k$.  Because $\tvnorm{P_0 -
  P_1} \le 1$ for all $P_0, P_1$, it is possible that the $f_k$-divergence
is infinite, while the marginals are much closer together. It is this
transfer from power divergence to variation distance, that is, $f_k$ to
$f_1(t) = |t - 1|$, that allows us to prove the strong localized lower
bounds depending on variation distance such as
Theorem~\ref{theorem:modulus-of-continuity}.

We may
parallel the proof of~\cite[Theorem 1]{DuchiJoWa18} to obtain a
tensorization result. In this context, the most important divergence for us
is the R\'{e}nyi 2-divergence (Def.~\ref{definition:local-renyi}),
which corresponds to the case $k = 2$ (i.e.\ the $\chi^2$-divergence) in
Proposition~\ref{proposition:fk-contractions}, $f(t) = (t - 1)^2$,
and $\dchis{P}{Q} = \exp(D_2(P|\!|Q)) - 1$.
Recall the sequentially interactive
formulation~\eqref{eqn:sequential-interactive} and let
\begin{equation*}
  \channel^n(S \mid x_{1:n})
  \defeq \int_{z_{1:n} \in S}
  \prod_{i = 1}^n d\channel(z_i \mid x_i, z_{1:i-1}).
\end{equation*}
Now, let $P_a, a = 0, 1$ be product distributions on $\mc{X}$, where we say
that the distribution of $X_i$ either follows $P_{0,i}$ or $P_{1,i}$, and
define $\marginprob_a^n(\cdot) = \int \channel^n(\cdot \mid x_{1:n})
dP_a(x_{1:n})$, noting that $dP_a(x_{1:n}) = \prod_{i = 1}^n dP_{a,i}(x_i)$
as $P_a$ is a product distribution. We have the following corollary.

\begin{corollary}
  \label{corollary:tensorized-contraction-chi}
  Let $\channel$ be sequentially interactive and satisfy
  $(2, \diffp^2)$-R\'{e}nyi privacy (Def.~\ref{definition:local-renyi}).
  Then
  \begin{equation*}
    \dchi{\marginprob_0^n}{\marginprob_1^n}
    \le \prod_{i=1}^n \left(1
    + 4 \diffp^2 \tvnorm{P_{0,i} - P_{1,i}}^2\right) - 1.
  \end{equation*}
\end{corollary}
\noindent
See Section~\ref{sec:proof-tensorized-contraction-chi} for a proof.  An
immediate consequence of
Corollary~\ref{corollary:tensorized-contraction-chi} and the
fact~\cite[Lemma 2.7]{Tsybakov09} that $\dkl{P_0}{P_1} \le \log(1 +
\dchi{P_0}{P_1})$ yields
\begin{equation}
  \label{eqn:tensorized-contraction-kl}
  \dkl{\marginprob_0^n}{\marginprob_1^n}
  \le \sum_{i = 1}^n \log\left(1 + 4 \diffp^2
  \tvnorm{P_{0,i} - P_{1,i}}^2 \right)
  \le 4 \diffp^2 \sum_{i = 1}^n
  \tvnorm{P_{0,i} - P_{1,i}}^2.
\end{equation}
The tensorization~\eqref{eqn:tensorized-contraction-kl} is
the key to our results, as we see in the later sections.

\subsubsection{Proof of Proposition~\ref{proposition:fk-contractions}}
\label{sec:proof-fk-contractions}

Let $p_0$ and $p_1$ be the densities of $P_0, P_1$ with respect to some
base measure $\mu$ dominating $P_0, P_1$. Without loss of generality, we
may assume that $\mc{Z}$ is finite, as all $f$-divergences are
approximable by finite partitions~\cite{Vajda72}; we let $\margindens_a$
denote the associated p.m.f.  For $k > 1$, the function $t \mapsto t^{1 -
  k}$ is convex on $\R_+$. Thus, applying Jensen's inequality, we may
bound $\fdivf{f_k}{\marginprob_0}{\marginprob_1}$ by
\begin{align}
  \nonumber
  \fdivf{f_k}{\marginprob_0}{\marginprob_1}
  = \sum_z \frac{|\margindens_0(z) - \margindens_1(z)|^k}{
    \margindens_1(z)^{k - 1}}
  & \leq
  \sum_z \int \frac{|\margindens_0(z) - \margindens_1(z)|^k}{
    \channeldens(z \mid x_0)^{k-1}} p_1(x_0) d\mu(x_0) \\
  & =  \int \underbrace{\left(\sum_z
    \frac{|\margindens_0(z) - \margindens_1(z)|^k}{
      \channeldens(z \mid x_0)^{k-1}} \right)}_{
    \eqdef W(x_0)} p_1(x_0) d\mu(x_0).
  \label{eqn:prop-k-moment-starting-point}
\end{align}
It thus suffices to upper bound $W(x_0)$.
To do so, we rewrite $\margindens_0(z) - \margindens_1(z)$ as
\begin{equation*}
  \margindens_{0}(z) - \margindens_{1}(z)
  = \int \channeldens(z \mid x) (dP_0(x) - dP_1(x))
  = \int \left(\channeldens(z \mid x)
  - \channeldens(z \mid x_0)\right) (dP_0(x) - dP_1(x)),
\end{equation*}
where we have used that $\int (dP_0 - dP_1) = 0$.
Now define the function
\begin{equation*}
  \Delta(z \mid x, x_0)
  \defeq \frac{\channeldens(z\mid x) - 
    \channeldens(z\mid x_0)}{\channeldens(z \mid x_0)^{1-1/k}}.
\end{equation*}
By Minkowski's integral inequality, we have the upper bound
\begin{align}
  \label{eqn:key-step-Minkovski}
  \lefteqn{W(x_0)^{1/k}
    = \left(\sum_z \left|\int
    \Delta(z \mid x, x_0) (p_0(x) - p_1(x))d\mu(x)
    \right|^k\right)^{1/k}} \\
  & \leq \int  \left(\sum_z \big|\Delta(z \mid x, x_0)
  (p_0(x) - p_1(x))\big|^k \right)^{1/k}
  d\mu(x)
  = \int \left(\sum_z |\Delta(z \mid x, x_0)|^k
  \right)^\frac{1}{k} |dP_0(x) - dP_1(x)|. \nonumber
\end{align}
Now we compute the inner summation: we have
that
\begin{equation*}
  \sum_z |\Delta(z \mid x, x_0)|^k
  =
  \sum_z \left|\frac{\channeldens(z \mid x)}{\channeldens(z \mid x_0)}
  - 1 \right|^k \channeldens(z \mid x_0)
  = \fdivf{f_k}{\channel(\cdot \mid x)}{\channel(\cdot \mid x_0)}.
\end{equation*}
Substituting this into our upper bound~\eqref{eqn:key-step-Minkovski}
on $W(x_0)$, we obtain that
\begin{equation*}
  W(x_0)
  \le \sup_{x \in \mc{X}}
  \fdivf{f_k}{\channel(\cdot \mid x)}{\channel(\cdot \mid x_0)}
  2^k \tvnorm{P_0 - P_1}^k,
\end{equation*}
as $\int|dP_0 - dP_1| = 2 \tvnorm{P_0 - P_1}$.
Substitute this upper bound into
inequality~\eqref{eqn:prop-k-moment-starting-point}.

\subsubsection{Proof of Corollary~\ref{corollary:tensorized-contraction-chi}}
\label{sec:proof-tensorized-contraction-chi}

We use an inductive argument. The base case in which $n = 1$ follows
immediately by Proposition~\ref{proposition:fk-contractions}.  Now, suppose that
Corollary~\ref{corollary:tensorized-contraction-chi} holds at $n - 1$; we
will show that the claim holds for $n \in \N$. We use the shorthand
$\margindens_a(z_{1:k})$ for the density of the measure $\marginprob_a^k$,
$a \in \{0, 1\}$ and $k \in \N$, which we may assume exists w.l.o.g. Then, by
definition of the $\chi^2$-divergence, we have
\begin{equation*}
  \dchi{\marginprob_0^n}{\marginprob_1^n} + 1 = 
  \E_{\marginprob_1}\left[
    \frac{\margindens_0^2(Z_{1:n})}{\margindens_1^2(Z_{1:n})} \right] 
  = \E_{\marginprob_1} \left[
    \frac{\margindens_0^2(Z_{1:{n-1}})}{\margindens_1^2(Z_{1:{n-1}})}
    \E_{\marginprob_1}
    \left[
      \frac{\margindens_0^2(Z_n \mid Z_{1:n-1})}{
        \margindens_1^2(Z_n \mid Z_{1:n-1})}\mid Z_{1:{n-1}}\right]  \right].
\end{equation*}
Noting that the $k$th marginal distributions $\marginprob_{a,k}(\cdot \mid z_{1:k-1})
= \int \channel(\cdot \mid x, z_{1:k-1}) dP_{a,i}(x)$ for $a \in \{0, 1\}$,
we see that for any $z_{1:n-1} \in \mc{Z}^{n-1}$,
\begin{align*}
  \E_{\marginprob_1}
  \left[\frac{\margindens_0^2(Z_n \mid z_{1:n-1})}{
      \margindens_1^2(Z_n \mid z_{1:n-1})}
    \mid z_{1:{n-1}}\right] 
  & = 1 + \dchi{\marginprob_{0,n}(\cdot \mid z_{1:n-1})}{\marginprob_{1,n}(\cdot \mid z_{1:n-1})}
  \\
  & \le
  1 + 4 \diffp^2 \tvnorm{P_{0,n}(\cdot \mid z_{1:n-1})
    - P_{1,n}(\cdot \mid z_{1:n-1})}^2 \\
  & = 1 + 4 \diffp^2 \tvnorm{P_{0,n} - P_{1,n}}^2,
\end{align*}
where the inequality is Proposition~\ref{proposition:fk-contractions}
and the final equality follows because $X_n$ is independent of $Z_{1:n-1}$.
This yields the inductive step and completes the proof
once we recall the inductive hypothesis and
that $\E_{\marginprob_1}[\frac{\margindens_0^2(Z_{1:n-1})}{
    \margindens_1^2(Z_{1:n-1})}]
= \dchis{\marginprob_0^{n-1}}{\marginprob_1^{n-1}} + 1$.

\subsection{Proof of Theorem~\ref{theorem:modulus-of-continuity}}
\label{sec:proof-modulus-of-continuity}

We follow the typical reduction of estimation to testing, common in the
literature on lower bounds~\cite{AgarwalBaRaWa12,
  DuchiJoWa18,Tsybakov09,Yu97}.
For shorthand, let $\theta_v = \theta(P_v)$ for $v = 0, 1$ throughout the
proof.
Define the ``distance''
\begin{equation*}
  \lossdist(P_0, P_1)
  \defeq \inf_\theta \left\{\loss(\theta - \theta(P_0))
  + \loss(\theta - \theta(P_1)) \right\},
\end{equation*}
which satisfies $\lossdist(P_0, P_1) = 2 \loss(\frac{\theta_0 - \theta_1}{2})$
when $\loss$ is convex and (by quasi-convexity and symmetry) satisfies
$\lossdist(P_0, P_1) \ge \loss(\frac{\theta_0 - \theta_1}{2})$.
By definition of $\lossdist$, we have
the mutual exclusion that for any $\theta$,
\begin{equation}
  \label{eqn:exclusion}
  \loss(\theta - \theta_0) < \half \lossdist(P_0, P_1)
  ~~ \mbox{implies} ~~
  \loss(\theta - \theta_1) \ge \half \lossdist(P_0, P_1).
\end{equation}
Let $\marginprob_0^n$ and $\marginprob_1^n$ be the marginal probabilities
over observations $Z_{1:n}$ under $P_0$ and $P_1$ for a
channel $\channel \in \channeldistset$.  Using Markov's inequality,
we have for any estimator $\what{\theta}$
based on $Z_{1:n}$ and any $\delta \ge 0$ that
\begin{align*}
  \E_{\marginprob_0^n}\left[L(\what{\theta} - \theta_0)\right]
  +
  \E_{\marginprob_1^n}\left[L(\what{\theta} - \theta_1)\right]
  & \ge \delta \left[
    \marginprob_0^n(L(\what{\theta} - \theta_0) \ge \delta)
    + \marginprob_1^n(L(\what{\theta} - \theta_1) \ge \delta)
    \right] \\
  & = \delta \left[1 -
    \marginprob_0^n(L(\what{\theta} - \theta_0) < \delta)
    + \marginprob_1^n(L(\what{\theta} - \theta_1) \ge \delta)
    \right].
\end{align*}
Setting $\delta = \delta_{01} \defeq \half \lossdist(P_0, P_1)$
and using the
implication~\eqref{eqn:exclusion},  we obtain
\begin{align}
  \nonumber
  \E_{\marginprob_0^n}\left[L(\what{\theta} - \theta_0)\right]
  +
  \E_{\marginprob_1^n}\left[L(\what{\theta} - \theta_1)\right]
  & \ge
  \delta_{01} \left[1 -
    \marginprob_0^n(L(\what{\theta} - \theta_0) < \delta)
    + \marginprob_1^n(L(\what{\theta} - \theta_1) \ge \delta)
    \right] \\
  & \ge
  \delta_{01} \left[1 -
    \marginprob_0^n(L(\what{\theta} - \theta_1) \ge \delta)
    + \marginprob_1^n(L(\what{\theta} - \theta_1) \ge \delta)
    \right]  \nonumber \\
  & \ge \delta_{01}
  \left[1 - \tvnorm{\marginprob_0^n - \marginprob_1^n}\right],
  \label{eqn:le-cam-application}
\end{align}
where in the last step we used the definition of the
variation distance.

Now we make use of the contraction inequality of
Corollary~\ref{corollary:tensorized-contraction-chi}
and its consequence~\eqref{eqn:tensorized-contraction-kl}
for KL-divergences.
By Pinsker's inequality and the corollary, we have
\begin{equation*}
  2 \tvnorm{\marginprob_0^n - \marginprob_1^n}^2
  \le \dkl{\marginprob_0^n}{\marginprob_1^n}
  \le \log(1+\dchi{\marginprob_0^n}{\marginprob_1^n})
  \le n \log \left(1 + 4 \diffp^2
  \tvnorm{P_0 - P_1}^2\right).
\end{equation*}
Substituting this into our preceding lower
bound~\eqref{eqn:le-cam-application} and using that $\what{\theta}$ is
arbitrary and $\delta_{01} = \half \lossdist(P_0, P_1)$, we have that for
any distributions $P_0$ and $P_1$,
\begin{equation*}
  \inf_{\what{\theta}}
  \inf_{\channel \in \channeldistset}
  \max_{P \in \{P_0, P_1\}} \E_P \left[L(\what{\theta} - \theta(P))\right]
  \ge \frac{1}{4} \lossdist(P_0, P_1)
  \left[1
    - \sqrt{\frac{n}{2} \log\left(1 + 4 \diffp^2 \tvnorm{P_0 - P_1}^2\right)
    }\right].
\end{equation*}
Now, for any $\delta \ge 0$, if
$\frac{n}{2} \log(1 + 4 \diffp^2 \delta^2)
\le \frac{1}{4}$, or equivalently,
$\delta^2 \le \frac{1}{4 \diffp^2} (\exp(\frac{1}{2n}) - 1)$,
then $1 - \sqrt{\frac{n}{2} \log(1 + 4 \diffp^2 \delta^2)} \ge \half$.
Applying this to the bracketed term in the preceding display,
we obtain
\begin{align*}
  \localminimax_n(P_0, L, \mc{P}, \channeldistset)
  & \ge \frac{1}{8} \sup_{P_1 \in \mc{P}}
  \left\{\lossdist(P_0, P_1)
  \mid \tvnorm{P_0 - P_1}^2
  \le \frac{1}{4 \diffp^2} \left[e^{\frac{1}{2n}} - 1\right]\right\} \\
  & \ge \frac{1}{8}
  \sup_{P_1 \in \mc{P}}
  \left\{\lossdist(P_0, P_1) \mid \tvnorm{P_0 - P_1}^2
  \le \frac{1}{8 n \diffp^2}\right\}
\end{align*}
because $e^x - 1 \ge x$ for all $x$. When $\loss$ is
convex, this is precisely
$\frac{1}{4} \lossmodcont(\frac{1}{\sqrt{8 n \diffp^2}}; P_0, \mc{P})$,
while in the quasi-convex case, it is at least
$\frac{1}{8} \lossmodcont(\frac{1}{\sqrt{8 n \diffp^2}}; P_0, \mc{P})$.

\subsection{Proof of Proposition~\ref{proposition:super-efficiency}}
\label{sec:proof-super-efficiency}

Our starting point is a lemma extending~\cite[Thm.~1]{BrownLo96}.  In the
lemma and the remainder of this section, for measures $P_0$ and $P_1$ we
define the $2$-affinity
\begin{equation*}
  \chipone{P_0}{P_1}
  \defeq \dchi{P_0}{P_1} + 1
  = \E_{P_1}\left[\frac{dP_0^2}{dP_1^2}\right]
  = \E_{P_0}\left[\frac{dP_0}{dP_1}\right],
\end{equation*}
which measures the similarity between distributions $P_0$ and $P_1$.
With these definitions, we have the following constrained risk inequality.

\begin{lemma}[\cite{DuchiRu18}, Theorem 1]
  \label{lemma:constrained-risk}
  Let $\theta_0 = \theta(P_0)$, $\theta_1 = \theta(P_1)$,
  and define $\Delta = \Phi(\half \ltwo{\theta_0 - \theta_1})$.
  If the estimator $\what{\theta}$ satisfies
  $\risk(\what{\theta}, \theta_0, P_0) \le \delta$ for some $\delta \ge 0$, then
  \begin{equation*}
    \risk(\what{\theta}, \theta_1, P_1)
    \ge 
      \hinge{\Delta^{1/2} - (\chipones{P_1}{P_0} \cdot \delta)^{1/2}}^2.
  \end{equation*}
\end{lemma}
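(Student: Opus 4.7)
The plan is to follow the Brown--Low constrained-risk template, adapted to general nondecreasing $\Phi$: first derive a pointwise two-point inequality on $\sqrt{\Phi(\ltwo{\what\theta-\theta_0})}+\sqrt{\Phi(\ltwo{\what\theta-\theta_1})}$ that lower-bounds the sum by $\sqrt{\Delta}$, then integrate against $P_1$ and control the two resulting terms separately---using Jensen's inequality on the $\theta_1$ term and a likelihood-ratio change of measure with Cauchy--Schwarz on the $\theta_0$ term to convert the assumed $P_0$-risk bound $\delta$ into a bound on the $P_1$-expectation.

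For the pointwise step, fix $Z$ and set $a = \ltwo{\what\theta(Z)-\theta_0}$, $b = \ltwo{\what\theta(Z)-\theta_1}$. The triangle inequality yields $a + b \ge \ltwo{\theta_0-\theta_1}$, hence $\max\{a,b\} \ge \half\ltwo{\theta_0-\theta_1}$. Since $\Phi$ is nondecreasing and nonnegative, and since $\sqrt{x}+\sqrt{y} \ge \sqrt{\max\{x,y\}}$ for $x, y \ge 0$,
$$\sqrt{\Phi(a)} + \sqrt{\Phi(b)} \;\ge\; \sqrt{\max\{\Phi(a),\Phi(b)\}} \;=\; \sqrt{\Phi(\max\{a,b\})} \;\ge\; \sqrt{\Phi(\half\ltwo{\theta_0-\theta_1})} \;=\; \sqrt{\Delta}.$$

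Now take expectations under $P_1$. Concavity of $\sqrt{\cdot}$ gives $\E_{P_1}\sqrt{\Phi(\ltwo{\what\theta-\theta_1})} \le \sqrt{\risk(\what\theta,\theta_1,P_1)}$, while a change of measure followed by Cauchy--Schwarz yields
$$\E_{P_1}\sqrt{\Phi(\ltwo{\what\theta-\theta_0})} \;=\; \E_{P_0}\!\left[\frac{dP_1}{dP_0}\sqrt{\Phi(\ltwo{\what\theta-\theta_0})}\right] \;\le\; \sqrt{\E_{P_0}\!\left[(dP_1/dP_0)^2\right]}\cdot \sqrt{\E_{P_0}\Phi(\ltwo{\what\theta-\theta_0})}.$$
The first factor equals $\sqrt{\chipones{P_1}{P_0}}$ by~\eqref{eqn:shorthand-chi}, and the second is at most $\sqrt{\delta}$ by hypothesis. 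Combining with the pointwise inequality gives $\sqrt{\Delta} \le \sqrt{\risk(\what\theta,\theta_1,P_1)} + \sqrt{\chipones{P_1}{P_0}\,\delta}$, and rearranging---using that the risk is nonnegative so the left-hand deficit can be replaced by its positive part---delivers the claim.

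The essential content is the pointwise two-point inequality, which requires only the triangle inequality in the parameter space and monotonicity of $\Phi$; the change-of-measure step is then classical. The only substantive edge case is $P_1 \not\ll P_0$, in which case $\chipones{P_1}{P_0}=+\infty$ and the bound is vacuous, so there is nothing to prove.
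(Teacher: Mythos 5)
Your proof is correct, and since the paper only cites this lemma from~\cite{DuchiRu18} rather than reproducing the proof, there is no in-paper argument to compare against; your pointwise two-point bound via the triangle inequality and monotonicity of $\Phi$, followed by a change of measure under $P_1$ with Cauchy--Schwarz to convert the $P_0$-risk hypothesis into a $\chi^2$-affinity term, is precisely the standard Brown--Low-style argument that the cited reference uses. The edge case $P_1 \not\ll P_0$ is also handled correctly.
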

\noindent
The lemma shows that if an estimator has small risk under
distribution $P_0$, then its risk for a nearby
distribution $P_1$ must be nearly the distance between the associated
parameters $\theta_0$ and $\theta_1$.

With Lemma~\ref{lemma:constrained-risk} in hand, we can prove
Proposition~\ref{proposition:super-efficiency}.
For shorthand let
$\risk_a(\what{\theta}) = \risk(\what{\theta}, \theta_a, \marginprob_a^n)$
denote the risk under the marginal $\marginprob_a^n$.
By Lemma~\ref{lemma:constrained-risk},
for any distributions $P_0$ and $P_1$, we have
\begin{equation*}
  \risk_1(\what{\theta})
  \ge \hinge{\Phi \left(\half \ltwo{\theta_0 - \theta_1}\right)
    - \left(\chipone{\marginprob_1^n}{\marginprob_0^n}
    \risk(\what{\theta}, \marginprob_0^n)\right)^{1/2}}^2,
\end{equation*}
and by Corollary~\ref{corollary:tensorized-contraction-chi}
we have
\begin{equation*}
  \chipone{\marginprob_1^n}{\marginprob_0^n}
  \le \left(1 + 4 \diffp^2 \tvnorm{P_0 - P_1}^2\right)^n
  \le \exp\left(4 n \diffp^2 \tvnorm{P_0 - P_1}^2\right).
\end{equation*}
Let $\modcont(\delta; P_0) = \lossmodcont(\delta; P_0, \mc{P})$ for
shorthand. For $t \in [0, 1]$, let $\mc{P}_t$ be the collection of
distributions
\begin{equation*}
  \mc{P}_t
  \defeq \left\{P \in \mc{P}
  \mid \tvnorm{P_0 - P_1}^2
  \le t \frac{ \log \frac{1}{\eta}}{4 n \diffp^2}\right\},
\end{equation*}
so that under the conditions of the proposition,
any distribution $P_1 \in \mc{P}_t$ satisfies
\begin{equation}
  \label{eqn:risk-almost-transferred}
  \risk_1(\what{\theta})
  \ge \hinge{\Phi \left(\half \ltwo{\theta_0 - \theta_1}\right)
    - \eta^{\frac{(1 - t)}{2}}
    \modcont\left((4 n \diffp^2)^{-1/2}; P_0\right)^{1/2}}^2.
\end{equation}

As $\loss(\half (\theta_0 - \theta(P_1)))
= \Phi(\half \ltwo{\theta_0 - \theta(P_1)})$,
inequality~\eqref{eqn:risk-almost-transferred}
implies that for all $t
\in [0, 1]$, there exists $P_1 \in \mc{P}_t$ such that
\begin{equation*}
  \risk(\what{\theta}, \marginprob_1^n)
  \ge
  \hinge{\modcont\left(\frac{\sqrt{t 
        \log\frac{1}{\eta}}}{\sqrt{4 n \diffp^2}}; P_0\right)^{1/2}
    -  \eta^{\frac{(1 - t)}{2}}
    \modcont\left(\frac{1}{\sqrt{4 n \diffp^2}}; P_0\right)^{1/2}
  }^2.
\end{equation*}
Because $\delta \mapsto \modcont(\delta)$ is non-decreasing,
if $t \in [0, 1]$ we may choose $P_1 \in \mc{P}_t$
such that
\begin{equation}
  \risk(\what{\theta}, \marginprob_1^n)
  \ge
  \hinge{1  - \eta^{(1 - t)/2}}^2
  \modcont\left(\frac{\sqrt{t 
      \log\frac{1}{\eta}}}{\sqrt{4 n \diffp^2}}; P_0\right).
  \label{eqn:intermediate-eta-risk-bound}
\end{equation}

Lastly, we lower bound the modulus of continuity at $P_0$ by
a modulus at $P_1$.
We claim that under Condition~\ref{cond:reverse-triangle},
for all $\delta > 0$, if
$\tvnorm{P_0 - P_1} \le \delta$ then
\begin{equation}
  \label{eqn:transfer-modcont}
  \modcont(2 \delta; P_0)
  \ge \frac{1}{2\Creverse} \modcont(\delta; P_1).
\end{equation}
Deferring the proof of this claim, note that
by
taking $\delta^2 = t \log \frac{1}{\eta} / (16 n \diffp^2)$
in inequality~\eqref{eqn:transfer-modcont},
Eq.~\eqref{eqn:intermediate-eta-risk-bound} implies
that there exists $P_1 \in \mc{P}_t$ such that
\begin{equation*}
  \risk(\what{\theta}, \marginprob_1^n)
  \ge
  \hinge{1 - \eta^{(1 - t)/2}}^2
  \modcont\left(2 \delta; P_0\right)
  \ge
  \frac{1}{2\Creverse} \hinge{1 - \eta^{(1 - t)/2}}^2
  \modcont\left(\frac{1}{4} \frac{\sqrt{t 
      \log\frac{1}{\eta}}}{\sqrt{n \diffp^2}}; P_1\right).
\end{equation*}

Let us return to the claim~\eqref{eqn:transfer-modcont}.  
For distributions $P_0, P_1, P_2$ with
parameters $\theta_a = \theta(P_a)$,
\begin{align*}
  \loss\left(\frac{\theta_1 - \theta_2}{2}\right)
  \le \loss(\theta_0 - \theta_1) + \loss(\theta_0 - \theta_2)
  & \leq
  \Creverse \loss\left(\frac{\theta_0 - \theta_1}{2}\right)
  + \Creverse \loss\left(\frac{\theta_0 - \theta_2}{2}\right)
\end{align*}
by Condition~\ref{cond:reverse-triangle}.  Then for any $\delta \ge 0$ and
$P_1$ with $\tvnorm{P_1 - P_0} \le \delta$, we have
\begin{align*}
  \modcont(2 \delta; P_0)
  & =
  \sup_{\tvnorm{P_0 - P}
    \le 2 \delta}
  \loss\left(\frac{\theta_0 - \theta(P)}{2}\right)
  \ge \sup_{\tvnorm{P_1 - P} \le
    \delta} \loss\left(\frac{\theta_0 - \theta(P)}{2}\right) \\
  & \ge 
  \sup_{\tvnorm{P - P_1} \le \delta}
  \left\{\Creverse^{-1} \loss\left(\frac{\theta_1 - \theta(P)}{2}\right)
  - \loss\left(\frac{\theta_0 - \theta_1}{2}\right)\right\}
  \ge \Creverse^{-1} \modcont(\delta; P_1)
  - \modcont(\delta; P_0).
\end{align*}
Rearranging, we have inequality~\eqref{eqn:transfer-modcont},
as for any distribution $P_1$
such that $\tvnorm{P_0 - P_1} \le \delta$,
\begin{equation*}
  2 \modcont(2 \delta; P_0)
  \ge
  \modcont(\delta; P_0)
  + \modcont(2 \delta; P_0)
  \ge \Creverse^{-1} \modcont(\delta; P_1).
\end{equation*}


\section{Discussion}
\label{sec:discussion}

By the careful construction of locally optimal and adaptive estimators, as
well as our local minimax lower bounds, we believe results in this paper
indicate more precisely the challenges associated with locally private
estimation.  To illustrate this, let us reconsider the estimation of a
linear functional $v^T \theta$ in a classical statistical problem. 
Let $\{P_\theta\}$ be a family with Fisher information matrices
$\{\fisher[\theta]\}$ and score $\dot{\ell}_\theta : \mc{X} \to
\R^d$. Then a classical estimators $\what{\theta}_n$ of the parameter
$\theta_0$ is efficient~\cite[Sec.~8.9]{VanDerVaart98} among regular
estimators if and only if
\begin{equation*}
  \what{\theta}_n
  - \theta_0
  = \frac{1}{n} \sum_{i = 1}^n
  -\fisher[\theta_0]^{-1} \dot{\ell}_{\theta_0}(X_i)
  + o_P(1 / \sqrt{n}),
\end{equation*}
and an efficient estimator $\what{\psi}_n$ of $v^T\theta$ satisfies
$\what{\psi}_n = v^T \theta_0 - n^{-1} \sum_{i = 1}^n v^T
\fisher[\theta_0]^{-1} \dot{\ell}_{\theta_0}(X_i) + o_P(n^{-1/2})$.  In
constrast, in the private case, our rate-optimal estimators
(recall Section~\ref{sec:mis-specified-expfam}) in the nonparametric
case have the asymptotic form
\begin{equation*}
  \what{\psi}_{{\rm priv},n} = v^T \theta_0
  -v^T
  \bigg(\frac{1}{n} \sum_{i = 1}^n \fisher[\theta_0]^{-1}\dot{\ell}_{\theta_0}(X_i)
  \bigg)
  + \frac{1}{n} \sum_{i = 1}^n W_i
  + o_P(1 / \sqrt{n}),
\end{equation*}
where the random variables $W_i$ must add noise of a magnitude scaling as
$\frac{1}{\diffp} \sup_x |v^T \fisher[\theta_0]^{-1}
\dot{\ell}_{\theta_0}(x)|$, because otherwise it is possible to distinguish
examples for which $v^T \fisher[\theta_0]^{-1} \dot{\ell}_{\theta_0}(X_i)$
is large from those for which it has small magnitude.  This enforced lack of
distinguishability of ``easy'' problems (those for which the scaled score
$\fisher[\theta_0]^{-1} \dot{\ell}_{\theta_0}(X_i)$ is typically small) from
``hard'' problems (for which it is large) is a feature of local privacy
schemes, and it helps to explain the difficulty of estimation, as well as to
illustrate the more nuanced scaling of the best possible estimators with
problem parameter $\theta_0$, when $\sup_x |v^T \fisher[\theta_0]^{-1}
\score_{\theta_0}(x)|$ may be similar to $\E_0[(v^T \fisher[\theta_0]^{-1}
  \score_{\theta_0}(X))^2]^{1/2}$, the optimal non-private asymptotic
variance.

We thus believe it prudent to more carefully explore feasible definitions of
privacy, especially in local senses. Regulatory decisions and protection
against malfeasance may require less stringent notions of privacy than pure
differential privacy, but local notions of privacy---where no sensitive
non-privatized data leaves the hands of a sample participant---are
desirable. The asymptotic expansions above suggest a notion of privacy
that allows some type of \emph{relative} noise addition, to preserve
the easiness of ``easy'' problems, will help.
Perhaps large values of $\diffp$, at least for high-dimensional
problems, may still provide acceptable privacy protection, at least in
concert with centralized privacy guarantees.  We look forward
to continuing study of these fundamental limitations and acceptable
tradeoffs between data utility and protection of study participants.

\appendix


\section{Proofs of non-private minimax results}

In this appendix, we collect the (more or less standard) proofs
of the results in Section~\ref{sec:classical-local-minimax}.

\subsection{Proof of Proposition~\ref{proposition:classical-local-minimax}}
\label{sec:proof-classical-local-minimax}

The lower bound follows the typical reduction of estimation to testing
commin in the literature on lower bounds~\cite{AgarwalBaRaWa12, Tsybakov09,
  Yu97}. Fix any distribution $P_1 \in \mc{P}$, let
$\theta_v = \theta(P_v)$ for shorthand, and define $\delta = |\theta_0
- \theta_1| / 2$. Then
for any $\theta \in \R$, that
$|\theta - \theta_0| < \delta$ implies
$|\theta - \theta_1| \ge \delta$.
Thus we have
\begin{align*}
  \E_{P_0^n}\left[(\what{\theta} - \theta_0)^2\right]
  + \E_{P_1^n}\left[(\what{\theta} - \theta_1)^2\right]
  & \stackrel{(i)}{\ge}
  \delta^2 \left[P_0^n\left(|\what{\theta} - \theta_0| \ge \delta
    \right) + P_1^n\left(|\what{\theta} - \theta_1| \ge \delta \right)
    \right] \\
  & = \delta^2 \left[1 - P_0^n\left(|\what{\theta} - \theta_0| < \delta\right)
    + P_1^n\left(|\what{\theta} - \theta_1| \ge \delta \right) \right] \\
  & \stackrel{(ii)}{\ge}
  \delta^2 \left[1 - P_0^n\left(|\what{\theta} - \theta_1| \ge \delta
    \right) + P_1^n\left(|\what{\theta} - \theta_1| \ge \delta \right)
    \right],
\end{align*}
where inequality~$(i)$ is Markov's inequality, and the second
is the implication preceding the display.
By the definition of variation distance and that
$\tvnorm{P - Q} \le \sqrt{2} \dhel(P, Q)$ for any $P, Q$,
we obtain
\begin{equation}
  \label{eqn:standard-minimax-easy}
  \E_{P_0^n}\left[(\what{\theta} - \theta_0)^2\right]
  + \E_{P_1^n}\left[(\what{\theta} - \theta_1)^2\right]
  \ge \delta \left[1 - \tvnorm{P_0^n - P_1^n}\right].
  \ge \delta \left[1 - \sqrt{2} \dhel(P, Q) \right].
\end{equation}
The tensorization properties of the Hellinger distance imply that
\begin{equation*}
  \dhel^2(P_0^n, P_1^n) = \left[1 - \left(1 - \dhel^2(P_0, P_1)\right)^n\right]
  \le n \dhel^2(P_0, P_1),
\end{equation*}
and substituting this into the bound~\eqref{eqn:standard-minimax-easy}
gives that for any $P_1 \in \mc{P}$,
\begin{equation*}
  \localminimax_n(P_0, \mc{P})
  \ge
  \half \E_{P_0^n}\left[(\what{\theta} - \theta_0)^2\right]
  + \half \E_{P_1^n}\left[(\what{\theta} - \theta_1)^2\right]
  \ge \frac{1}{8} \sup_{P_1 \in \mc{P}}
  \left(\theta(P_0) - \theta(P_1)\right)^2
  \hinge{1 - \sqrt{2 n \dhel^2(P_0, P_1)}}.
\end{equation*}
Taking a supremum over all $P_1 \in \mc{P}$ satisfying
$\dhel^2(P_0, P_1) \le \frac{1}{4 n}$ then implies
\begin{equation*}
  \localminimax_n(P_0, \mc{P})
  \ge \frac{\sqrt{2} - 1}{8 \sqrt{2}}
  \helmod^2(n^{-1/2} / 2; P_0, \mc{P}).
\end{equation*}

To prove the upper bound, we exhibit an estimator.  Let $\theta_v =
\theta(P_v)$ as above, and assume w.l.o.g.\ that $P_a$ have
densities $p_a$ (take base measure $\mu = P_0 + P_1$). Define the
acceptance set
$A \defeq \{x \in \mc{X}^n \mid \prod_{i=1}^n \frac{p_0(x_i)}{p_1(x_i)}
\ge 1\}$
and 
estimator
$\what{\theta}_n = \theta_0 1_A + \theta_1 1_{A^c}$.
It is then immediate that
\begin{equation*}
  \max_{P \in \{P_0, P_1\}}
  \E_{P^n}\left[(\what{\theta}_n - \theta(P))^2\right]
  = (\theta_0 - \theta_1)^2
  \max\left\{P_0^n(A^c), P_1^n(A) \right\}
  \le (\theta_0 - \theta_1)^2
  \left[1 - \tvnorm{P_0^n - P_1^n}\right].
\end{equation*}
Using the tensorization properties of Hellinger distance and
that $\tvnorm{P - Q} \ge \dhel^2(P, Q)$ for any distributions $P$ and $Q$,
we obtain
\begin{equation*}
  \tvnorm{P_0^n - P_1^n}
  \ge \dhel^2(P_0^n, P_1^n)
  = \left[1 - \left(1 - \dhel^2(P_0, P_1)\right)^n\right]
  \ge 1 - \exp\left(-n \dhel^2(P_0, P_1)\right),
\end{equation*}
so that
\begin{equation*}
  \max_{P \in \{P_0, P_1\}}
  \E_{P^n}\left[(\what{\theta}_n - \theta(P))^2 \right]
  \le (\theta_0 - \theta_1)^2 \exp\left(-n \dhel^2(P_0, P_1)\right).
\end{equation*}
Taking a supremum over $P_1$ gives the claimed upper bound.

Finally, we turn to the bound~\eqref{eqn:local-minimax-via-hellinger}.
We have by assumption that for any $\delta \ge n^{-1/2} / 2$,
\begin{align*}
  \left\{\helmod(\delta; P_0, \mc{P}) \exp(-n \delta^2)
  \right\}
  & \le \helmod(n^{-1/2} / 2; P_0, \mc{P})
  \cdot B (4n \delta^2)^{\beta/2}   \exp(-n \delta^2) \\
  & \le \helmod(n^{-1/2} / 2; P_0, \mc{P})
  B \beta^{\beta/2} e^{-\beta / 2},
\end{align*}
where the supremum is attained at $\delta^2 = \frac{\beta}{2n}$.

\subsection{Proof of Claim~\ref{claim:qmd-bounded-local-minimax}}
\label{sec:proof-qmd-bounded-local-minimax}

We use the shorthand $\helmod(\delta;
\theta_0, \Theta) = \sup_{\theta \in \Theta} \{|v^T (\theta_0 - \theta)|
\mid \dhel(P_\theta, P_{\theta_0}) \le \delta\}$.
The lower bound is nearly immediate via
Proposition~\ref{proposition:classical-local-minimax}: by the QMD assumption
there exists $\delta > 0$ such that $\norm{h} \le \delta$ implies
$\frac{1}{9} h^T I_{\theta_0} h \le \dhel^2(P_{\theta_0 + h}, P_\theta) \le
\frac{1}{7} h^T I_{\theta_0} h$.  Thus we obtain for all $n \gtrsim
\frac{1}{\lambda_{\min}(I_{\theta_0}) \delta^2}$ that
\begin{align*}
  \helmod(n^{-1/2} / 2; \theta_0, \Theta)
  & =
  \sup_h \Big\{|h^T v| \mid
  \dhel^2(P_{\theta_0 + h}, P_{\theta_0}) \le \frac{1}{4n} \Big\} \\
  & \ge \sup_{\norm{h} \le \delta} \left\{h^T v
  \mid h^T I_{\theta_0} h \le \frac{7}{4 n} \right\}
  = \sup \left\{h^T v \mid h^T I_{\theta_0} h \le \frac{7}{4n} \right\}
  = \frac{\sqrt{7}}{2\sqrt{n}} \ltwos{I_{\theta_0}^{-1/2} v}.
\end{align*}

For the upper bound,
choose $\delta > 0$ such that $\norm{h} \le \delta$ implies that
$\dhel^2(P_{\theta_0 + h}, P_{\theta_0}) \ge \frac{1}{9} h^T I_{\theta_0}
h$, while $\norm{h} > \delta$ implies that
$\dhel^2(P_{\theta_0 + h}, P_{\theta_0}) > \gamma > 0$; such a pair of
$\delta$ and $\gamma$ exist
by Assumption~\ref{assumption:identifiability} and
quadratic mean differentiability.
There thus exists $r_0 = r_0(\delta, \theta_0)$ such that
$\dhel(P_{\theta_0 + h}, P_{\theta_0}) \le r_0$ implies
$\norm{h} \le \delta$, and so for any $r \le r_0$, we have
that $\dhel^2(P_{\theta_0 + h}, P_{\theta_0}) \le r^2$ implies
\begin{equation*}
  \frac{1}{9} h^T I_{\theta_0} h \le
  \dhel^2(P_{\theta_0 + h}, P_{\theta_0}) \le r^2.
\end{equation*}
Using this in the definition of the modulus of continuity yields
\begin{equation*}
  \helmod(r; \theta_0, \Theta)
  = \sup_h \left\{|v^T h| \mid \dhel^2(P_{\theta_0 + h}, P_{\theta_0}) \le
  r^2 \right\}
  \le \sup_h \left\{v^T h \mid h^T I_{\theta_0} h \le 9 r^2 \right\}
  = 3 r \ltwos{I_{\theta_0}^{-1/2} v}
\end{equation*}
for all $r \le r_0$.  Noting that $\helmod \le \diam(\Theta)$ regardless,
we apply Proposition~\ref{proposition:classical-local-minimax} and observe
\begin{align*}
  \sup_{r \ge 0} 
  \left\{\helmod^2(r; \theta_0, \Theta)
  \exp(-n r^2)\right\}
  & \le \max\left\{
  \sup_{0 \le r \le r_0}
  \helmod^2(r; \theta_0, \Theta)
  \exp(-n r^2),
  \sup_{r > r_0} \helmod^2(r; \theta_0, \Theta)
  \exp(-n r^2) \right\} \\
  & \le \max\left\{
  \sup_{r \ge 0}
  9 r^2 v^T I_{\theta_0}^{-1} v
  \exp\left(-n r^2 \right),
  \diam^2(\Theta) \exp(-n r_0^2)
  \right\} \\
  & = \max\left\{\frac{9}{e n} v^T I_{\theta_0}^{-1} v,
  \diam^2(\Theta) \exp(-n r_0^2)\right\}.
\end{align*}

\subsection{Proof of Lemma~\ref{lemma:regular-differentiability}}
\label{sec:proof-regular-differentiability}

The proof is essentially~\cite[Lemma 8.14]{VanDerVaart98}.
Letting $\fisher[0] = \E_{P_0}[gg^T]$,
we have under
$P_0^n \times \auxdist^n$ that
\begin{equation*}
  \log \frac{dP_{h_n/\sqrt{n}}^n \times d\auxdist^n}{
    dP_0^n \times d\auxdist^n}(X_{1:n}, \auxvar_{1:n})
  = \frac{1}{\sqrt{n}} \sum_{i = 1}^n h^T g(X_i)
  - h^T \fisher[0] h + o_{P_0}(1)
\end{equation*}
(recall~\cite[Theorem 7.2]{VanDerVaart98}).
Thus we have
\begin{equation*}
  \left[\begin{matrix}
      \sqrt{n}(\what{\theta}_n - \theta(P_0)) \\
      \log \frac{dP_{h_n/\sqrt{n}}^n \times d\auxdist^n}{
        dP_0^n \times d\auxdist^n}
    \end{matrix}
    \right]
  \mathop{\cdlong}_{P_0 \times \auxdist} \normal\left(
  \left[ \begin{matrix}
      0 \\ -\half h^T \fisher[0] h \end{matrix} \right],
  \left[\begin{matrix} \Sigma_0 + \Sigma_\aux &
      \E[\influencefunc(X) g(X)^T] h \\
      h^T \E[g(X) \influencefunc(X)^T]
      & h^T \fisher[0] h
    \end{matrix}\right]\right).
\end{equation*}
Applying the delta method and
Le Cam's third lemma~\cite[Example~6.7]{VanDerVaart98} gives
that
\begin{equation*}
  \sqrt{n}(\what{\theta}_n - \theta(P_0))
  \mathop{\cdlong}_{P_{h_n/\sqrt{n}} \times \auxdist}
  \normal\left(\E[\influencefunc(X) g(X)^T] h,
  \Sigma_0 + \Sigma_\aux\right).
\end{equation*}
The differentiability of $h \mapsto \theta(P_h)$ at $h = 0$ then
gives the first result.

The second limiting result follows by a standard compactness argument.

\subsection{Inequality~\eqref{eqn:hellinger-mean-modulus}:
  bounds on the Hellinger modulus}
\label{sec:proof-hellinger-mean-modulus}

For the lower bound on $\helmod(\delta)$, we use techniques from
semiparametric inference~\cite[e.g.][Ch.~25]{VanDerVaart98}.
Let $\theta_0 = \E_{P_0}[X]$ and define the function
$g(x) = (x - \theta_0)$.
Define the distribution $dP_t = \hinge{1 + tg} dP_0 / C_t$, where
$C_t = \int \hinge{1 + tg} dP_0$. Then we have
\begin{align*}
  1 \le C_t
  & \le \int (1 + tg) dP_0
  + t^2 \int \frac{\hinge{1 + t g} - (1 + tg)}{t^2} dP_0 \\
  & = 1 - t^2 \int \frac{1}{t^2} (1 + tg) \indic{g \le -1/t} dP_0 \\
  & \le 1 + t^2 \left[\frac{P_0(g \le -1/t)}{t^2}
    + \frac{\sqrt{\var_0(g) P_0(g \le -1/t)}}{t} \right]
  \le 1 + 2 t^2 \var_0(g)
\end{align*}
by Chebyshev's inequality. A standard
calculation~\cite[Ch.~25.3]{VanDerVaart98} via the dominated
convergence theorem---with the observation that the influence
function of the mean is $\influencefunc(x) = (x - \theta_0)$---yields
\begin{equation*}
  \E_{P_t}[X] = \theta_0 + t \var_{P_0}(X) (1 + o(1))
  ~~ \mbox{and} ~~
  \dhel^2(P_t, P_0) = \frac{1}{8} t^2 \var_{P_0}(X) (1 + o(1))
\end{equation*}
as $t \to 0$. Let $t_0$ be small enough that
$|o(1)| \le \frac{1}{7}$ for $t \le t_0$.
Then for $\delta^2 \le \var_0(X) t_0 / 7$,
\begin{equation*}
  \helmod(\delta)
  \ge \sup_{t \le t_0}
  \left\{ \frac{7}{8} t \var_{P_0}(X) \mid \frac{1}{7} t^2
  \var_0(X) \le \delta^2 \right\}
  = \frac{7 \sqrt{7}}{8} \sqrt{\var_{P_0}(X)},
\end{equation*}
while (as $o(1) \to 0$)
\begin{equation*}
  \liminf_{\delta \downarrow 0}
  \frac{\helmod(\delta)}{\sqrt{8 \var_{P_0}(X)} \delta} \ge 1.
\end{equation*}

For the upper bound on $\helmod$, we require a few more steps. Let $P_1
\in \mc{P}$ be an arbitrary distribution, where we assume that
$\dhel^2(P_0, P_1) \le \frac{1}{4}$, and use the shorthand and $\theta_1 =
\theta(P_1)$. Then
\begin{align}
  \theta_1 - \theta_0
  & = \int (x - \theta_0) (dP_1 - dP_0)
  = \int (x - \theta_0) (\sqrt{dP_1} + \sqrt{dP_0})
  (\sqrt{dP_1} - \sqrt{dP_0}) \nonumber \\
  & \le \left(\int (x - \theta_0)^2 (\sqrt{dP_1} + \sqrt{dP_0})^2\right)^{1/2}
  \sqrt{2} \dhel(P_0, P_1) \nonumber \\
  & \le \left(2 \E_0[(X - \theta_0)^2] + 2 \E_1[(X - \theta_0)^2]
  \right)^{1/2} \sqrt{2} \dhel(P_0, P_1) \nonumber \\
  & = 2 \sqrt{\var_0(X) + \var_1(X) + (\theta_0 - \theta_1)^2}
  \cdot \dhel(P_0, P_1)
  \label{eqn:bound-mean-diff-by-hellinger}
\end{align}
by the Cauchy-Schwarz inequality and definition of Hellinger distance.
Noting that
$\E[(X - \theta)^4]
\le 2^3 (\E[X^4] + \theta^4)$,
we may assume there exists some $M_4 < \infty$ such that
$M_4^4 \ge \E_P[(X - \theta_0)^4]$ for all $P \in \mc{P}$.
We now bound the variance $\var_1(X)$ in terms of $\var_0(X)$ and
$\dhel(P_0, P_1)$.
Using that
\begin{equation*}
  (x - \theta_0)^2 - (x - \theta_1)^2 =
  -2x(\theta_0 - \theta_1) + \theta_0^2 - \theta_1^2,
\end{equation*}
we obtain
\begin{align*}
  \var_0(X) - \var_1(X)
  & = \int (x - \theta_0)^2 (dP_0 - dP_1) - 2 \theta_1 (\theta_0 - \theta_1)
  + \theta_0^2 - \theta_1^2 \\
  & = \int(x - \theta_0)^2 (\sqrt{dP_0} + \sqrt{dP_1})
  (\sqrt{dP_0} - \sqrt{dP_1}) + (\theta_0 - \theta_1)^2.
\end{align*}
Again applying Cauchy-Schwarz, we observe that
\begin{align*}
  |\var_0(X) - \var_1(X)|
  & \le 2 M_4^2 \dhel(P_0, P_1) + (\theta_0 - \theta_1)^2.
\end{align*}
Substituting this bound into
inequality~\eqref{eqn:bound-mean-diff-by-hellinger}
and squaring yields
\begin{equation*}
  (\theta_1 - \theta_0)^2
  \le 4 \left(2 \var_0(X) + 2 M_4^2 \dhel(P_0, P_1)
  + 2 (\theta_0 - \theta_1)^2 \right) \dhel^2(P_0, P_1),
\end{equation*}
or
\begin{equation*}
  (\theta_1 - \theta_0)^2
  \le \frac{8 \var_0(X)}{1 - 8 \dhel^2(P_0, P_1)}
  \dhel^2(P_0, P_1)
  + \frac{8 M_4^2}{1 - 8 \dhel^2(P_0, P_1)}
  \dhel^3(P_0, P_1).
\end{equation*}
In particular, as soon as
$\dhel^2(P_0, P_1) \le \frac{1}{16}$
and $\dhel(P_0, P_1) \le (\var_0(X) / M_4^2)^{1/3}$,
\begin{equation*}
  (\theta_1 - \theta_0)^2
  \le 32 \var_0(X) \dhel^2(P_0, P_1).
\end{equation*}
Solving for the modulus~\eqref{eqn:hellinger-modcont} gives the result,
and eliminating higher order terms yields
\begin{equation*}
  \limsup_{\delta \downarrow 0} \sup \left\{
  \frac{|\theta(P_0) - \theta(P_1)|}{
    \dhel(P_0, P_1)} \mid \dhel(P_0, P_1) \le \delta \right\}
  \le \sqrt{8 \var_{P_0}(X)}.
\end{equation*}

\subsubsection{Proof of Claim~\ref{claim:non-private-mis-specified}}
\label{sec:proof-non-private-mis-specified}

A minor extension of Proposition~\ref{proposition:classical-local-minimax}
shows there exist numerical constants $0 < c_0, c_1 < \infty$ such that
\begin{equation}
  \label{eqn:min-square-classical}
  c_0 (\helmod^2(c_0 n^{-1/2}; P_0, \mc{P}) \wedge B)
  \le \localminimax_n(P_0, \losstrunc, \mc{P}, \{\textup{id}\})
  \le c_1 \sup_{r \ge 0} \left\{
  \left(\helmod^2(r; P_0, \mc{P}) \wedge B \right) e^{-n r^2}\right\}
\end{equation}
for all $n \in \N$ and any family $\mc{P}$.
For the influence function $\influencefunc(x) = (x - \E_0[X]) / \var_0(X)$
following the claim,
by appropriate renormalization, we may apply the limiting
equality~\eqref{eqn:hellinger-mean-modulus}
to  obtain
\begin{equation*}
  \helmod(\delta, P_0, \Pnonpara)
  = \frac{\sqrt{8}}{\sqrt{\var_0(X)}} \delta (1 + o(1))
  = \sqrt{8 \delta^2 \E_0[\influencefunc(X)^2]}(1 + o(1)).
\end{equation*}
The lower bound $\localminimax_n(P_0, \losstrunc, \Pnonpara,
\{\textup{id}\}) \gtrsim \frac{1}{n \var_0(X)}$ for large $n$ then follows
by inequality~\eqref{eqn:min-square-classical}.  The matching upper bound
similarly follows, as for all large enough $n$, if $r \le 1/n^{1/4}$ we have
\begin{equation*}
  \helmod^2(r; P_0, \Pnonpara)
  \le 16 r^2 \E_0[\influencefunc(X)^2],
\end{equation*}
and so
\begin{align*}
  \sup_{r \ge 0} \left\{
  \left(\helmod^2(r; P_0, \Pnonpara) \wedge B \right) e^{-n r^2} \right\}
  & \le \max\left\{
  \sup_{r \ge 0} 16 r^2 \E_0[\influencefunc(X)^2] e^{-n r^2},
  \sup_{r \ge n^{-1/4}} B e^{-n r^2} \right\} \\
  & = O(1) \max\left\{\frac{1}{n} \E_0[\influencefunc(X)^2],
  B e^{-\sqrt{n}}\right\}.
\end{align*}

A derivation \emph{mutatis mutandis} identical
to that for Claim~\ref{claim:qmd-bounded-local-minimax} gives the parametric
result, as the exponential family
has score $\score_{\theta_0}(x) = x - \E_{\theta_0}[X]$
and Fisher information $\fisher[\theta_0] = \var_0(X)$.


\section{Deferred main proofs}

\subsection{Proof of Proposition~\ref{proposition:achievable}}
\label{sec:proof-achievable}

Let $P_0$ and $P_1$ be distributions on $\mc{X}$, each with
densities $p_0,p_1$ according to some base measure $\mu$.
Let $\theta_a = \theta(P_a)$, and consider the problem of privately
collecting observations and deciding whether $\theta = \theta_0$ or $\theta
= \theta_1$. We define a randomized-response estimator
for this problem using a simple hypothesis test.
Define the acceptance set
$A \defeq \left\{x \in \mc{X} \mid p_0(x) > p_1(x) \right\}$,
so $P_0(A) - P_1(A) = \tvnorm{P_0 - P_1}$.
Now, consider the following estimator:
for each $X_i$, define
\begin{equation*}
  T_i = \indic{X_i \in A}
  ~~ \mbox{and} ~~
  Z_i \mid \{T_i = t\}
  = \begin{cases}
    1 & \mbox{with~probability~}
   (e^\diffp + 1)^{-1}\left(e^{\diffp} t + 1-t\right) \\
    0 & \mbox{with~probability~}
   (e^{-\diffp} + 1)^{-1}\left(e^{-\diffp} t + 1-t\right)
  \end{cases}
\end{equation*}
Then the channel $\channel(\cdot \mid X_i)$ for $Z_i \mid X_i$ is
$\diffp$-differentially-private, and
setting $\delta_\diffp = \frac{e^\diffp}{1 + e^\diffp} - \half$,
we have
\begin{equation*}
  \E_0[Z_i]
  = \frac{1 + \delta_\diffp}{2} P_0(A)
  + \frac{1 - \delta_\diffp}{2} P_0(A^c)
  = \frac{1- \delta_\diffp}{2} + \delta_\diffp P_0(A)
  ~~ \mbox{and} ~~
  \E_1[Z_i]
  = \frac{1- \delta_\diffp}{2} + \delta_\diffp P_1(A)
\end{equation*}
while $Z_i \in \{0, 1\}$.
Define the statistic
\begin{equation*}
  K_n \defeq \frac{1}{\delta_\diffp}
  \left(\frac{1}{n} \sum_{i = 1}^n Z_i - \frac{1-\delta_\diffp}{2}\right),
\end{equation*}
so that $\E_0[K_n] = P_0(A)$ and $\E_1[K_n] = P_1(A)$.
We define the estimator
\begin{equation*}
  \what{\theta} \defeq
  \theta_0 \indic{K_n \ge \frac{P_0(A) + P_1(A)}{2}}
  + \theta_1 \indic{K_n < \frac{P_0(A) + P_1(A)}{2}}.
\end{equation*}

We now analyze the performance of $\what{\theta}$.  By
construction of the acceptance set $A$,
\begin{equation*}
  \frac{P_0(A) + P_1(A)}{2}
  = P_0(A) + \frac{P_1(A) - P_0(A)}{2}
  = P_0(A) - \half \tvnorm{P_1 - P_0}
  = P_1(A) + \half \tvnorm{P_1 - P_0},
\end{equation*}
so by Hoeffding's inequality, we have
\begin{equation*}
  \max\left\{P_0\left(K_n \le \frac{P_0(A) + P_1(A)}{2}
  \right),
  P_1\left(K_n \ge \frac{P_0(A) + P_1(A)}{2}\right)
  \right\}
  \le \exp\left(- \frac{n \delta_\diffp^2 \tvnorm{P_0 - P_1}^2}{2}\right).
\end{equation*}
In particular, we have
\begin{equation*}
  \E_0[\loss(\what{\theta} - \theta_0)]
  + \E_1[\loss(\what{\theta} - \theta_1)]
  \le \left[\loss(\theta_1 - \theta_0)
    + \loss(\theta_0 - \theta_1)\right]
  \exp\left(-\frac{n \delta_\diffp^2 \tvnorm{P_0 - P_1}^2}{2}\right).
\end{equation*}
Using the growth condition~\ref{cond:reverse-triangle}, we obtain
\begin{align}
  \nonumber
  \E_0[\loss(\what{\theta} - \theta_0)]
  + \E_1[\loss(\what{\theta} - \theta_1)]
  & \le
  2 \Creverse \loss\left(\frac{\theta_0 - \theta_1}{2}\right)
  \exp\left(-\frac{n \delta_\diffp^2 \tvnorm{P_0 - P_1}^2}{2}
  \right) \\
  & \le 2 \Creverse \sup_{P \in \mc{P}}
  \loss \left(\frac{\theta_0 - \theta(P)}{2}\right)
  \exp\left(-\frac{n \delta_\diffp^2 \tvnorm{P_0 - P}^2}{2}
  \right) \nonumber \\
  & = 2 \Creverse \sup_{r \ge 0}
  \left\{\lossmodcont(r; P_0)
  \exp\left(-\frac{n \delta_\diffp^2 r^2}{2}\right)\right\}.
  \nonumber
\end{align}

\subsection{Proof of Corollary~\ref{corollary:l1-info-attained}}
\label{sec:proof-l1-info-attained}
Define the shorthand
\begin{equation*}
  \tvmod(\delta)
  \defeq \sup_h \left\{|\functional(\theta_0 + h) - \functional(\theta_0)|
  ~ \mbox{s.t.} ~
  \tvnorm{P_{\theta_0 + h} - P_{\theta_0}} \le \delta \right\}.
\end{equation*}
We first apply Proposition~\ref{proposition:achievable}. As in this
setting the constant $\Creverse = O(1)$ from
Condition~\ref{cond:reverse-triangle} is automatically a universal constant,  
we obtain for numerical constants $C_0, C_1 < \infty$ that
\begin{equation}
  \label{eqn:local-minimax-to-phi}
  \localminimax(P_{\theta_0}, \loss, \pfamily, \channeldistset)
  \le
  C_0 \sup_{\tau \ge 0} \loss\left(\tvmod(C_1 \tau)\right) e^{-n \tau^2 \diffp^2}.
\end{equation}

We bound $\tvmod(\delta)$ for small $\delta$.
Let $r_\functional$ and $r_0$ be remainders as in the proof of
Proposition~\ref{proposition:information-private-lb}, so that
$\tvnorm{P_{\theta_0 + h} - P_\theta} = \half \information[\theta_0]|h| +
r_0(h)$ and $\functional(\theta_0 + h) = \functional(\theta_0) +
\functional'(\theta_0) h + r_\functional(h)$, where both are $o(h)$ as
$|h| \to 0$. Let $h_0 > 0$ be such that $|r_0(h)| \le |J_{\theta_0} h| /
4$ for $|h| \le h_0$.  Choose
$\delta_0 > 0$ so that $\tvnorm{P_{\theta_0 + h} - P_{\theta_0}} \le
\delta_0$ implies $|h| \le h_0$, which is possible by
Assumption~\ref{assumption:identifiability},
and $|h| \le \frac{4 \delta}{\information[\theta_0]}$ implies
$|r_\functional(h)| \le |\functional'(\theta_0) h|$.
Then for all $\delta \le \delta_0$,
if $\tvnorm{P_{\theta_0 + h} - P_{\theta_0}} \le \delta$, we have
$|h| \le h_0$ and consequently
\begin{equation*}
  \delta \ge \tvnorm{P_{\theta_0 + h} - P_{\theta_0}}
  = \half \information[\theta_0] |h| + r_0(h)
  \ge \frac{1}{4} \information[\theta_0]|h|,
\end{equation*}
or $|h| \le 4 \delta / \information[\theta_0]$.
Thus we obtain
\begin{align*}
  \tvmod(\delta)
  & =
  \sup_h \left\{|\functional(\theta_0 + h) - \functional(\theta_0)|
  ~\mbox{s.t.}~ \tvnorm{P_{\theta_0 + h} - P_{\theta_0}} \le \delta \right\} \\
  & \le
  \sup_h \left\{|\functional'(\theta_0) h + r_\functional(h)|
  ~\mbox{s.t.}~ |h| \le \frac{4 \delta}{\information[\theta_0]}\right\}
  \le \frac{8 |\functional'(\theta_0)|}{\information[\theta_0]}
  \delta.
\end{align*}

We now return to inequality~\eqref{eqn:local-minimax-to-phi}.
Substituting the preceding bound,
for numerical constants $C_0, C_1 < \infty$ whose values
may change from line to line,
\begin{align*}
  \localminimax_n(P_{\theta_0}, \loss, \pfamily, \channeldistset)
  & \le C_0 \max\left\{
  \sup_{0 \le \tau \le \delta_0 / C_1}
  \loss\left(\tvmod\left(C_1 \tau\right)\right)
  e^{-\tau^2 n \diffp^2},
  \loss\left(\diam(\functional(\Theta))\right)
  e^{-\delta_0^2 n \diffp^2 / C_1^2}
  \right\} \\
  & \le C_0 \max\left\{
  \sup_{0 \le \tau \le \delta_0 / C_1}
  \loss\left(\frac{C_1 |\functional'(\theta_0)|}{\information[\theta_0]}
  \tau \right) e^{-\tau^2 n \diffp^2},
  \loss\left(\diam(\functional(\Theta))\right)
  e^{-\delta_0^2 n \diffp^2 / C_1^2} \right\}.
\end{align*}
Finally, we use the assumption that
$\loss(at) \le C a^\beta \loss(t)$ for all $a \ge 1$.
We have
\begin{equation*}
  \loss\left(\frac{C_1 |\functional'(\theta_0)|}{\information[\theta_0]}
  \tau \right)
  \le C' (n \diffp^2 \tau^2)^{\beta/2} \loss\left(\frac{|\functional'(\theta_0)|}{
    \information[\theta_0]} \frac{1}{\sqrt{n \diffp^2}}\right)
\end{equation*}
and using that $\sup_t t^{\beta/2} e^{-t} = (\beta/2)^{\beta/2} e^{-\beta/2}$ gives
the result.

\subsection{Proof of Proposition~\ref{proposition:information-private-lb}}
\label{sec:proof-information-private-lb}

We assume that $\functional'(\theta_0) \neq 0$; the result is otherwise
trivial. Applying Theorem~\ref{theorem:modulus-of-continuity}, we have
\begin{equation}
  \label{eqn:theorem-modulus-of-continuity-example}
  \localminimax_n(P_{\theta_0}, L, \mc{P}, \channeldistset_\diffp) \geq
  \frac{1}{8} \lossmodcont
  \left(\frac{1}{\sqrt{8 n \diffp^2}}; P_{\theta_0}, \mc{P}\right).
\end{equation}
Now, we evaluate $\modcont(\delta) \defeq \lossmodcont(\delta;
P_{\theta_0}, \mc{P})$ for small $\delta > 0$.  By assumption, there exist
remainders $r_0$ and $r_\functional$, both satisfying $|r(h) / h| \to 0$
as $h \to 0$, such that $\tvnorms{P_{\theta_0 + h} - P_{\theta_0}} = \half
\information[\theta_0]|h| + r_0(h)$ and $\functional(\theta_0 + h) -
\functional(\theta_0) = \functional'(\theta_0) h + r_\functional(h)$.
Then
\begin{align*}
  \modcont(\delta)
  & \ge \sup_h
  \left\{\loss\left(\half (\functional(\theta_0 + h) -
  \functional(\theta_0))\right)
  \mid \tvnorm{P_{\theta_0 + h} - P_{\theta_0}}
  \le \delta \right\} \\
  & = \sup_h
  \left\{\loss\left(\half (\functional'(\theta_0) h
  + r_\functional(h))\right)
  \mid \information[\theta_0] |h| + 2 r_0(h)
  \le 2 \delta \right\}.
\end{align*}
Choose $h_0 = h(\theta_0, \functional, \mc{P}) > 0$ such that
$|h| \le h_0$ implies that
$|r_0(h)| \le \information[\theta_0] |h| / 2$ and
$|r_\functional(h)| \le |\functional'(\theta_0) h| / 5$. Then evidently
\begin{align*}
  \modcont(\delta)
  & \ge \sup_{|h| \le h_0}
  \left\{\loss\left(\frac{2}{5} |\functional'(\theta_0) h|\right)
  \mid \information[\theta_0] |h|
  \le \delta \right\}
  \stackrel{(\star)}{=}
  \loss\left(\frac{2}{5} \information[\theta_0] |\functional'(\theta_0)|
  \delta \right),
\end{align*}
where equality~$(\star)$ occurs whenever $\delta \le h_0 /
\information[\theta_0]$.  Setting $\delta = \frac{1}{\sqrt{8 n
    \diffp^2}}$, letting $n$ grow, and
substituting into inequality
inequality~\eqref{eqn:theorem-modulus-of-continuity-example} gives the
proposition.

\subsection{Proof of Theorem~\ref{theorem:general-private}}
\label{sec:proof-general-private}

The proof mirrors that of
Proposition~\ref{proposition:information-private-lb}.  Let $\theta_0 =
\theta(P_0)$ be the desired parameter.  Again, we assume that $\nabla
\functional(\theta_0)^T \E_0[\influencefunc(X) g(X)] \neq 0$, as otherwise the
result is trivial. For the $\bigLone$-information $\information[g,0]
\defeq \int |g| dP_0$, there exist remainders $r_0, r_\functional$ both
satisfying $r(h) = o(h)$ and
\begin{equation*}
  \tvnorm{P_0 - P_h}
  = \half |h| \information[g,0]
  + r_0(h)
  ~~ \mbox{and} ~~
  \functional(\theta(P_h))
  = \functional(\theta_0)
  + \nabla \functional(\theta_0)^T \E_0[\influencefunc(X) g(X)] h
  + r_\functional(h)
\end{equation*}
by the differentiability assumptions.  Choose $h_0 > 0$ small enough that
$|h| \le h_0$ implies that $|r_0(h)| \le \half |h| \information[g, 0]$
and $|r_\functional(h)| \le |\nabla \functional(\theta_0)^T
\E_0[\influencefunc(X) g(X)] h| / 20$, which depends only
on $\functional$ and $\Pmodel$. Then defining $\modcont(\delta) \defeq
\lossmodcont(\delta; P_0, \mc{P})$ for shorthand, we have
\begin{align*}
  \modcont(\delta)
  & \ge \sup_{|h| \le h_0}
  \left\{\loss\left(\half (\functional(\theta_0) - \functional(P_h))\right)
  \mid \tvnorm{P_0 - P_h} \le \delta \right\} \\
  & \ge \sup_{|h| \le h_0}
  \left\{\loss\left(\frac{9}{20} \nabla \functional(\theta_0)^T
  \E_0[\influencefunc(X) g(X)] h \right)
  \mid \information[g,0]|h| \le \delta \right\}.
\end{align*}
For all $\delta \le h_0 / \information[g,0]$, then, we obtain
\begin{equation*}
  \modcont(\delta)
  \ge
  \loss\left(\frac{19 \delta}{40}
  \frac{\nabla \functional(\theta_0)^T
  \E_0[\influencefunc(X) g(X)]}{\information[g,0]}\right).
\end{equation*}
Applying Theorem~\ref{theorem:modulus-of-continuity}
and
setting $\delta = \frac{1}{\sqrt{8n\diffp^2}}$ gives the result.


\section{Technical Appendices}

\subsection{Proof of Lemma~\ref{lemma:nice-families-l1-differentiable}}
\label{sec:proof-nice-families-l1-differentiable}

By the triangle inequality, we have
\begin{align}
  &\int |p_{\theta_0 + h} - p_{\theta_0} - 
  h^T \dot{\ell}_{\theta_0} p_{\theta_0}|d\mu \nonumber \\
  &\leq \underbrace{\int \left|p_{\theta_0 + h} - p_{\theta_0} - 
    \half h^T \dot{\ell}_{\theta_0} \sqrt{p_{\theta_0}}
    ({\sqrt{p_{\theta_0 + h}} + \sqrt{p_{\theta_0}}})\right|
    d\mu}_{\defeq I_1(h; \theta_0)}
  + \underbrace{\int \left| \half h^T \dot{\ell}_{\theta_0} 
    \sqrt{p_{\theta_0}} (\sqrt{p_{\theta_0 + h}} - \sqrt{p_{\theta_0}})\right|
    d\mu}_{\defeq I_2(h; \theta_0)}.
  \nonumber 
\end{align}
We show that each of the integral terms $I_1$ and $I_2$ are both
$o(\norm{h})$ as $h \to 0$.
By algebraic 
manipulation and the Cauchy--Schwarz inequality,
\begin{align*}
  I_1(h; \theta_0) &= \int |\sqrt{p_{\theta_0 + h}} + 
  \sqrt{p_{\theta_0}}| \cdot \left|\sqrt{p_{\theta_0 + h}} - 
  \sqrt{p_{\theta_0}} - \half h^T \dot{\ell}_{\theta_0} 
  \sqrt{p_{\theta_0}}\right| d\mu \nonumber \\
  &\leq \left(\int |\sqrt{p_{\theta_0 + h}} + \sqrt{p_{\theta_0}}|^2 
  d\mu \right)^\half  \cdot \left(\int \left|\sqrt{p_{\theta_0 + h}} - 
  \sqrt{p_{\theta_0}} - \half h^T \dot{\ell}_{\theta_0} 
  \sqrt{p_{\theta_0}}\right|^2 d\mu\right)^\half
\end{align*}
Jensen's inequality gives
$\int |\sqrt{p_{\theta_0 + h}} + \sqrt{p_{\theta_0}}|^2 d\mu
\leq 2 \int (p_{\theta_0 + h} + p_{\theta_0}) d\mu = 2$.
The assumption
that $\mc{P}$ is QMD at $\theta_0$ immediately yields
$I_1(h; \theta_0) = o(\norm{h})$.
To bound $I_2$, we again apply the Cauchy--Schwarz inequality, obtaining
\begin{equation*}
  2 I_2 (h; \theta_0) \leq \left(\int |h^T \dot{\ell}_{\theta_0}
  \sqrt{p_{\theta_0}} |^2 d\mu \right)^\half 
  \cdot  \left(\int |\sqrt{p_{\theta_0 + h}} - 
  \sqrt{p_{\theta_0}}|^2 d\mu \right)^\half
\end{equation*}
Since $\mc{P}$ is QMD at $\theta_0$, we have $\int |\sqrt{p_{\theta_0 + h}}
- \sqrt{p_{\theta_0}}|^2 d\mu = \int |\half h^T \dot{\ell}_{\theta_0}
\sqrt{p_{\theta_0}} |^2 d\mu + o(\norm{h}^2) = O_{\theta_0}(\norm{h}^2)$
(see~\cite[Ch.~7.2]{VanDerVaart98}).  Thus $I_2(h; \theta_0) =
O_{\theta_0}(\norm{h}^2)$, giving the lemma.

\subsection{Proof of
  Proposition~\ref{proposition:uniform-achievability-one-dim-exp}}
\label{sec:proof-uniform-achievability-one-dim-exp}

We require one additional piece of notation before we begin the proof.
Let $W_i = Z_i - V_i$ be the error in the private version of the
quantity $V_i$, so that $\E[W_i \mid V_i] = 0$, and
\begin{equation*}
  W_i = \begin{cases}
    \frac{2}{e^\diffp - 1} V_i
    - \frac{1}{e^\diffp - 1}
    & \mbox{w.p.}~ \frac{e^\diffp}{e^\diffp + 1} \\
    \frac{-2 e^\diffp}{e^\diffp - 1} V_i
    + \frac{e^\diffp}{e^\diffp - 1}
    & \mbox{w.p.}~ \frac{1}{e^\diffp + 1}.
  \end{cases}
\end{equation*}
Recall our definitions of $V_i = \indics{T(X_i) \ge \what{T}_n}$ and $Z_i$ as
the privatized version of $V_i$. Letting $\wb{Z}_n = \frac{1}{n} \sum_{i =
  1}^n Z_i$, and similarly for $\wb{V}_n$ and $\wb{W}_n$, recall also the
definition of the random variable $G_n \defeq \Psi(\what{T}_n, \theta_0) =
P_{\theta_0}(T(X) \ge \what{T}_n)$.
By mimicking the delta method, we will show that
\begin{equation}
  \sqrt{n}(\what{\theta}_n - \theta_0)
  = 2 \information[\theta_0]^{-1}
  \cdot \sqrt{n} \left(\wb{V}_n - G_n + \wb{W}_n \right) + o_P(1).
  \label{eqn:delta-method-expfam}
\end{equation}
Deferring the proof of the expansion~\eqref{eqn:delta-method-expfam},
let us show how it implies the proposition.

First, with our definition of the $W_i$, we have
\begin{equation*}
  \var(W_i \mid V_i) =
  \E[W_i^2 \mid V_i]
  = \frac{e^\diffp }{(e^\diffp - 1)^2}
  = \delta_\diffp^{-2},
\end{equation*}
so that $\wb{W}_n = \frac{1}{n} \sum_{i = 1}^n W_i$ satisfies $\sqrt{n}
\wb{W}_n \cd \normal(0, \delta_\diffp^{-2})$ by the Lindeberg CLT.  Thus,
assuming the expansion~\eqref{eqn:delta-method-expfam}, it remains to show
the weak convergence result
\begin{equation}
  \label{eqn:asymp-Z-n}
  \frac{\sqrt{n}\left(\wb{V}_n - G_n\right)}{G_n(1-G_n)} \cd \normal(0, 1).
\end{equation}
where $G_n = \Psi(\what{T}_n, \theta_0)$.
By definition, the $\{X_i\}_{i=1}^n$ are independent of $\what{T}_n$, and
hence
\begin{equation*}
  \E [V_i \mid \what{T}_n]= \Psi(\what{T}_n, \theta_0)  = G_n~~\text{and}~~ 
  \var(V_i \mid \what{T}_n) = 
  \Psi(\what{T}_n, \theta_0)(1-\Psi(\what{T}_n, \theta_0))
  = G_n(1-G_n).
\end{equation*}
The third central moments of the $V_i$ conditional on $\what{T}_n$
have the bound
\begin{equation*}
  \E \left[ \left|V_i - \E [V_i \mid \what{T}_n]\right|^3 \mid \what{T}_n \right]
  \leq  \Psi(\what{T}_n, \theta_0)(1-\Psi(\what{T}_n, \theta_0))
  = G_n(1-G_n).
\end{equation*}
Thus, we may apply the Berry-Esseen Theorem 
\cite[Thm 11.2.7]{LehmannRo05} to obtain
\begin{equation*}
  \sup_{t\in \R} 
  \left|\P\left(\frac{\sqrt{n}\left(\wb{V}_n - G_n\right)}{G_n(1- G_n)}
  \leq t \mid \what{T}_n\right)
  - \Phi(t) \right|
  \leq U_n \defeq \frac{1}{\sqrt{n G_n(1- G_n)}} \wedge 2.
\end{equation*}
Jensens's inequality then implies
\begin{equation*}
  \sup_{t \in \R}
  \left|\P\left(\frac{\sqrt{n}\left(\wb{V}_n - G_n\right)}{G_n(1- G_n)}
  \leq t\right) - \Phi(t)\right|
  \leq 
  \E\left[\sup_{t \in \R} \left|
    \P\left(\frac{\sqrt{n}\left(\wb{V}_n - G_n\right)}{G_n(1- G_n)} \leq t \mid \what{T}_n\right)
    - \Phi(t) \right|\right]
  \leq \E [U_n]
\end{equation*}

To show the convergence~\eqref{eqn:asymp-Z-n}, it is thus sufficient to show
that $\E [U_n] \to 0$ as $n \uparrow \infty$. To that end, the
following lemma on the behavior of $\Psi(t, \theta)
= P_\theta(T(X) \ge t)$ is useful.
\begin{lemma}
  \label{lemma:overlap-nonzero-lemma}
  Let $t_0 = \E_{\theta_0}[T(X)]$ and
  assume that $\var_{\theta_0} (T(X)) > 0$. Then there exist
  $\epsilon > 0$ and $c \in (0, \half)$ such that
  if $t \in [t_0 \pm \epsilon]$ and
  $\theta \in [\theta_0 \pm \epsilon]$, then
  $\Psi(t, \theta) \in [c, 1-c]$.
\end{lemma}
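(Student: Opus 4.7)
The plan is to use the nondegeneracy of $T(X)$ under $P_{\theta_0}$ to find points strictly above and below $t_0$ carrying positive $P_{\theta_0}$-mass, and then to transfer these bounds to a small neighborhood of $(t_0,\theta_0)$ via the standard continuity of exponential-family probabilities in the natural parameter; monotonicity of $\Psi$ in $t$ does the rest.

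For the first step, since $\var_{\theta_0}(T(X)) > 0$ and $\E_{\theta_0}[T(X)] = t_0$, neither of the events $\{T > t_0\}$ and $\{T < t_0\}$ can be null under $P_{\theta_0}$ (otherwise $T = t_0$ almost surely, contradicting positive variance). Writing $\{T > t_0\} = \bigcup_k \{T \ge t_0 + 1/k\}$ and using monotone continuity of probability, there exist $\epsilon_1 > 0$ and $\alpha > 0$ such that
\[
  P_{\theta_0}(T \ge t_0 + 2\epsilon_1) \ge \alpha
  \quad\text{and}\quad
  P_{\theta_0}(T \le t_0 - 2\epsilon_1) \ge \alpha,
\]
and disjointness of these events forces $\alpha \le 1/2$.

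For the second step, I would show that for each fixed $s \in \R$ the maps $\theta \mapsto P_\theta(T \ge s)$ and $\theta \mapsto P_\theta(T \le s)$ are continuous at $\theta_0$. Since $\theta_0 \in \interior \dom A$, pick $\delta > 0$ with $[\theta_0-\delta,\theta_0+\delta] \subset \dom A$. Convexity of $\theta\mapsto e^{\theta T(x)}$ gives, for all $\theta$ in this interval, $e^{\theta T(x)} \le e^{(\theta_0-\delta) T(x)} + e^{(\theta_0+\delta) T(x)}$, whose $\mu$-integral is $e^{A(\theta_0-\delta)} + e^{A(\theta_0+\delta)} < \infty$. Together with continuity of $A$ on $\interior \dom A$ this supplies an integrable dominator for $\indic{T(x) \ge s}\, e^{\theta T(x) - A(\theta)}$, and dominated convergence yields the claimed continuity. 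Consequently there exists $\epsilon_2 \in (0, \delta]$ such that
\[
  P_\theta(T \ge t_0 + 2\epsilon_1) \ge \alpha/2
  \quad\text{and}\quad
  P_\theta(T \le t_0 - 2\epsilon_1) \ge \alpha/2
  \quad \text{for all } \theta \in [\theta_0 - \epsilon_2, \theta_0 + \epsilon_2].
\]

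Finally, set $\epsilon = \min(\epsilon_1,\epsilon_2)$ and $c = \alpha/2 \in (0, 1/2)$. For any $(t,\theta)$ with $|t - t_0| \le \epsilon$ and $|\theta - \theta_0| \le \epsilon$, the inequality $t \le t_0 + \epsilon_1 < t_0 + 2\epsilon_1$ together with monotonicity of $\Psi(\cdot,\theta)$ in $t$ gives $\Psi(t,\theta) \ge P_\theta(T \ge t_0 + 2\epsilon_1) \ge c$, while $t \ge t_0 - \epsilon_1 > t_0 - 2\epsilon_1$ gives
\[
  \Psi(t,\theta) \le P_\theta(T > t_0 - 2\epsilon_1)
  \le 1 - P_\theta(T \le t_0 - 2\epsilon_1) \le 1 - c,
\]
which is the claim. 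The only real obstacle is the continuity step: it is essential to stay strictly inside $\interior \dom A$, since the convexity bound on $e^{\theta T(x)}$ would not produce an integrable dominator on the boundary, and in fact continuity can fail there.
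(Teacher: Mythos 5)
Your proof is correct, and it follows the same broad strategy as the paper's: bound the two tail probabilities away from $0$ at $\theta_0$ first, then extend to a $\theta$-neighborhood by continuity, and handle the $t$-direction by the monotonicity of $\Psi(\cdot,\theta)$. The difference is in how the $\theta$-continuity is established and whether monotonicity in $\theta$ is used. The paper cites Lehmann--Romano (Thm.~2.7.1) for both continuity and monotonicity of $\theta \mapsto \Psi(t,\theta)$, and then exploits the monotonicity (the MLR property of one-parameter exponential families) to reduce the two-dimensional bound to checking the four corner values of the rectangle $[t_0\pm\epsilon_1]\times[\theta_0\pm\epsilon_2]$. You instead prove the continuity in $\theta$ from scratch by dominated convergence, using the convexity bound $e^{\theta T(x)} \le e^{(\theta_0-\delta)T(x)} + e^{(\theta_0+\delta)T(x)}$ (valid for $\theta\in[\theta_0-\delta,\theta_0+\delta]$, since a convex function attains its max at endpoints) together with continuity of $A$ on $\interior\dom A$ to build the integrable dominator; you then never need the MLR monotonicity, just uniformity of the two tail bounds over the small $\theta$-interval. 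Your version is a bit longer but more self-contained and does not rely on the MLR structure, while the paper's is shorter at the cost of invoking two nontrivial exponential-family facts. Both are valid; your remark that the argument must stay in $\interior\dom A$ for the dominator to work is exactly the right caveat.
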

\begin{proof}
  By the dominated convergence theorem
  and our assumption that
  $\var_{\theta_0}(T(X)) > 0$,
  where $t_0 = \E_{\theta_0}[T(X)]$,
  we have
  \begin{equation*}
    \liminf_{t \uparrow t_0}
    \Psi(t, \theta_0)
    = P_{\theta_0}(T(X) \ge t_0) \in (0, 1)
    ~~ \mbox{and} ~~
    \limsup_{t \downarrow t_0}
    \Psi(t, \theta_0)
    = P_{\theta_0}(T(X) > t_0) \in (0, 1).
  \end{equation*}
  The fact that
  $t \mapsto \Psi(t, \theta_0)$ is non-increasing implies
  that for some $\epsilon_1 > 0, c \in (0, \frac{1}{4})$,
  we have
  $\Psi(t, \theta_0) \in [2c, 1-2c]$ for
  $t \in [t_0 - \epsilon_1, t_0 + \epsilon_1]$.
  Fix this $\epsilon_1$ and $c$.
  By~\cite[Thm 2.7.1]{LehmannRo05}, 
  we know that any $t \in \R$,
  the function $\theta \mapsto \Psi(t, \theta)$ is continuous and
  non-decreasing. Thus for any
  $\epsilon_2 > 0$, we have
  \begin{equation*}
    \Psi(t_0 + \epsilon_1, \theta_0 - \epsilon_2)
    \le \Psi(t, \theta)
    \le \Psi(t_0 - \epsilon_1, \theta_0 + \epsilon_2)
    ~~ \mbox{for}~~
    (t, \theta) \in [t_0 \pm \epsilon_1] \times [\theta_0 \pm \epsilon_2].
  \end{equation*}
  Using the continuity of $\theta \mapsto \Psi(t, \theta)$, we may
  choose $\epsilon_2 > 0$ small enough that
  \begin{equation*}
    \Psi(t, \theta) \in [ c, 1- c]~~\mbox{for}~~
    (t, \theta) \in \{t_0 - \epsilon_1, t_0 + \epsilon_1\}
    \times \{\theta_0 - \epsilon_2, \theta_0 + \epsilon_2\}.
  \end{equation*}
  The lemma 
  follows by taking $\epsilon = \epsilon_1 \wedge \epsilon_2$. 
\end{proof}

As $\var_{\theta_0} (T(X)) > 0$ by assumption,
Lemma~\ref{lemma:overlap-nonzero-lemma} and the
fact that $\what{T}_n \cp t_0$ imply
\begin{equation}
  \label{eqn:range-of-G-n}
  G_n \defeq \Psi(\what{T}_n, \theta_0)
  = P_{\theta_0}(T(X) \ge \what{T}_n)\in [c+o_P(1), 1-c + o_P(1)].
\end{equation} 
The bounds~\eqref{eqn:range-of-G-n} imply
that $G_n(1-G_n) \geq c(1-c) + o_P(1)$, so
$U_n \cp 0$.
By construction $|U_n| \leq 2$ for all $n$,
so the bounded convergence theorem implies $\E [U_n] \rightarrow 0$,
which was what we required to show the weak convergence
result~\eqref{eqn:asymp-Z-n}. 
The joint convergence in the proposition follows because
$\wb{W}_n$ and $\wb{V}_n - G_n$ are conditionally uncorrelated.

\paragraph{The delta method expansion}
We now return to demonstrate the claim~\eqref{eqn:delta-method-expfam}.  For
$p \in [0, 1]$, recall the
definition~\eqref{eqn:private-expfam-estimator}
of the function $H$, and define
\begin{equation}
  \label{eqn:def-H-n}
  H_n(p)
  \defeq H(p, \what{T}_n)
  = \inf \left\{\theta\in \R \mid P_\theta(T(X) \ge \what{T}_n) \ge p\right\},
\end{equation}
where the value is $-\infty$ or $+\infty$ for $p$ below or above the range
of $\theta \mapsto P_\theta(T(X) \ge \what{T}_n)$, respectively.  Then
$\what{\theta}_n = H_n(\wb{Z}_n)$ by
construction~\eqref{eqn:private-expfam-estimator}.  We would like to apply
Taylor's theorem and the inverse function theorem to $\what{\theta}_n -
\theta_0 = H_n(\wb{Z}_n) - \theta_0$, but this requires a few additional
steps.

By the inverse function theorem, $p \mapsto H_n(p)$ is
$\mc{C}^\infty$ on $(\inf_\theta \Psi(\what{T}_n, \theta),
\sup_\theta \Psi(\what{T}_n, \theta))$, and letting
\begin{equation*}
  \dot{\Psi}_\theta(t, \theta) = \frac{\partial}{\partial \theta}
  \Psi(t, \theta)
  = \E_\theta[\indic{T(X) \ge t} (T(X) - A'(\theta))]
  = \frac{\partial}{\partial \theta} P_\theta(T(X) \ge t),
\end{equation*}
we
have $H_n'(p) = \dot{\Psi}_\theta(\what{T}_n, H_n(p))^{-1}$ whenever $p$ is
interior to the range of $\theta \mapsto P_\theta(T(X) \ge \what{T}_n)$.  To
show that $\wb{Z}_n$ is (typically) in this range, we require a bit of
analysis on $\dot{\Psi}_\theta$.
\begin{lemma}
  \label{lemma:continuity-expfam}
  The function $(t, \theta) \mapsto \dot{\Psi}_\theta(t, \theta)
  = \E_{\theta}[\indic{T(X) \ge t} (T(X) - A'(\theta))]$
  is continuous at $(t_0, \theta_0)$, where $t_0 =
  \E_{\theta_0}[T(X)] = A'(\theta_0)$.
\end{lemma}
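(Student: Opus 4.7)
The plan is to exploit the fact that, at the specific point $(t_0,\theta_0)$, the integrand $\indic{T(x)\ge t_0}(T(x)-A'(\theta_0))$ vanishes precisely where the indicator $\indic{T(x)\ge t}$ has its jump in $t$. This will let the discontinuity of the indicator be absorbed by the smallness of the linear factor. Concretely, for $(t,\theta)$ near $(t_0,\theta_0)$ I will bound
\begin{equation*}
  |\dot{\Psi}_\theta(t,\theta)-\dot{\Psi}_\theta(t_0,\theta_0)|
  \le \underbrace{|\dot{\Psi}_\theta(t,\theta)-\dot{\Psi}_\theta(t_0,\theta)|}_{\text{move } t}
  + \underbrace{|\dot{\Psi}_\theta(t_0,\theta)-\dot{\Psi}_\theta(t_0,\theta_0)|}_{\text{move } \theta}
\end{equation*}
and show each summand tends to $0$.

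For the first (the hard) term, observe that
\begin{equation*}
  \dot{\Psi}_\theta(t,\theta)-\dot{\Psi}_\theta(t_0,\theta)
  = \E_\theta\!\left[(\indic{T(X)\ge t}-\indic{T(X)\ge t_0})(T(X)-A'(\theta))\right],
\end{equation*}
and on the event $\{T(X)\in[t\wedge t_0,\,t\vee t_0]\}$ we can write $|T(X)-A'(\theta)|\le|t_0-A'(\theta)|+|t-t_0|$. Since $A'$ is continuous on $\dom A$ (indeed analytic) and $A'(\theta_0)=t_0$, this upper bound is $o(1)$ as $(t,\theta)\to(t_0,\theta_0)$, and the probability factor is at most $1$. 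Hence the first term is $o(1)$ without any need to control $P_{\theta_0}(T(X)=t_0)$, which could be strictly positive.

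For the second (softer) term, continuity of $\theta\mapsto\dot{\Psi}_\theta(t_0,\theta)$ at $\theta_0$ follows from standard exponential-family regularity: on $\interior\dom A$, the function $\theta\mapsto \E_\theta[g(X)]=\int g(x)\exp(\theta T(x)-A(\theta))\,d\mu(x)$ is analytic (hence continuous) for every $g$ satisfying the usual integrability, in particular for $g(x)=\indic{T(x)\ge t_0}T(x)$ and $g(x)=\indic{T(x)\ge t_0}$, because $\E_\theta[|T(X)|^k]<\infty$ for all $k$ in the regular case and we may dominate on a neighborhood of $\theta_0$. Writing $\dot{\Psi}_\theta(t_0,\theta)=\E_\theta[\indic{T(X)\ge t_0}T(X)]-A'(\theta)\,\E_\theta[\indic{T(X)\ge t_0}]$ and combining continuity of each piece with continuity of $A'$ gives convergence to $\dot{\Psi}_\theta(t_0,\theta_0)$.

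The main obstacle is exactly what the argument is designed to circumvent: when $P_{\theta_0}(T(X)=t_0)>0$, the map $t\mapsto\indic{T(x)\ge t}$ is genuinely discontinuous at $t_0$, so naive dominated convergence in $t$ fails. The resolution above—factoring out $(T(X)-A'(\theta))$, which is $O(|t-t_0|+|\theta-\theta_0|)$ on the shrinking set where the indicator changes—is the one delicate point; everything else is standard exponential-family smoothness.
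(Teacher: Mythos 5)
Your proof is correct, and it rests on exactly the same observation as the paper's: the factor $(T(X) - A'(\theta))$ is small precisely on the set where $\indic{T(X)\ge t}$ jumps, because $A'(\theta)\to A'(\theta_0)=t_0$. The two arguments package this observation differently. The paper rewrites $\indic{T(X)\ge t_0}(T(X)-A'(\theta_0))=\hinge{T(X)-t_0}$ (using $A'(\theta_0)=t_0$) and similarly extracts $\hinge{T(X)-t}$ from the second term; since $t\mapsto\hinge{t}$ is $1$-Lipschitz, the $t$-dependence is handled in one line, and the remaining $\theta$-dependence of $\E_\theta[\hinge{T(X)-t_0}]$ is handled by the standard continuity of $\theta\mapsto\E_\theta[f(X)]$. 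You instead split first in $t$ (under a fixed $\theta$), note the indicator difference is supported on $[t\wedge t_0,t\vee t_0]$, and bound the linear factor there by $|t-t_0|+|t_0-A'(\theta)|$; then you handle $\theta$ separately. Both routes use the same exponential-family regularity for the second step, and neither needs any control on the atom $P_{\theta_0}(T(X)=t_0)$. The paper's hinge rewriting is slightly slicker (it absorbs both moves at once and produces a clean three-term sandwich), while your telescoping split makes the mechanism of why the discontinuity is harmless somewhat more transparent; the underlying content is the same.
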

\noindent
To avoid disrupting the flow, we defer the proof to
Section~\ref{sec:proof-continuity-expfam}.
Now, we have that $\dot{\Psi}_{\theta}(t_0, \theta_0) = \half
\E_{\theta_0}[|T(X) - t_0|] > 0$, so Lemma~\ref{lemma:continuity-expfam}
implies there exists $\epsilon > 0$ such that
\begin{equation}
  \label{eqn:positive-expfam-deriv}
  \inf_{|t - t_0| \le \epsilon
    ,|\theta - \theta_0| \le \epsilon}
  \dot{\Psi}_{\theta}(t, \theta) \ge c > 0
\end{equation}
for some constant $c$. Thus, we obtain that
\begin{align}
  \nonumber
  \P\left(\wb{Z}_n \not \in \range(\Psi(\what{T}_n, \cdot))\right)
  & \le \P\left(\wb{Z}_n \not \in \range(\Psi(\what{T}_n, \cdot)),
  \what{T}_n \in [t_0 \pm \epsilon] \right)
  + \P\left(\what{T}_n \not \in [t_0 \pm \epsilon]\right) \\
  & \stackrel{(i)}{\le} \P\left(\wb{Z}_n \not \in
  [\Psi(\what{T}_n, \theta_0)
    \pm c \epsilon] \right)
  + o(1)
  \to 0,
  \label{eqn:z-in-range}
\end{align}
where inequality~$(i)$ follows because
$\range(\Psi(t, \cdot)) \supset [\Psi(t, \theta_0) \pm c \epsilon]$
for all $t$ such that $|t - t_0| \le \epsilon$
by condition~\eqref{eqn:positive-expfam-deriv}, and the
final convergence because $\wb{Z}_n - \Psi(\what{T}_n, \theta_0)
\cp 0$ and $\what{T}_n \cp t_0$.

We recall that for
 fixed $t$, $\theta \mapsto \Psi(t, \theta)$ is analytic on the interior
of the natural parameter space and strictly increasing at all $\theta$ for
which $\Psi(t, \theta) \in (0, 1)$ (cf.~\cite[Thm.~2.7.1,
  Thm.~3.4.1]{LehmannRo05}). Thus,
\begin{equation*}
  H_n(\Psi(\what{T}_n, \theta)) = \theta
  ~~ \mbox{whenever} ~~
  \Psi(\what{T}_n, \theta) \in (0, 1).
\end{equation*}
As $G_n = \Psi(\what{T}_n, \theta_0)
\in [c + o_P(1), 1- c+o_P(1)]$ by definition~\eqref{eqn:range-of-G-n} of
$G_n$, we obtain
\begin{equation*}
  \P \left(H_n(\Psi(\what{T}_n, \theta_0)) \neq \theta_0\right) \to 0. 
\end{equation*}
By the differentiability of $H_n$ on the interior of its
domain (i.e.\ the range of $\Psi(\what{T}_n, \cdot)$), we use 
the convergence~\eqref{eqn:z-in-range}
and Taylor's intermediate value theorem to obtain that for
some $p_n$ between $\wb{Z}_n$ and $\Psi(\what{T}_n, \theta_0)$, we have
\begin{align}
  \label{eqn:expfam-almost-done}
  \lefteqn{\sqrt{n}(\what{\theta}_n - \theta_0)
    = \sqrt{n} (\what{\theta}_n - H_n(\Psi(\what{T}_n, \theta_0)))
    + o_P(1)} \\
  & \qquad ~ = H_n'(p_n) \sqrt{n} \left(\wb{Z}_n - \Psi(\what{T}_n, \theta_0)
  \right) + o_P(1)
  = \dot{\Psi}_\theta(\what{T}_n, H_n(p_n))^{-1}
  \sqrt{n} \left(\wb{Z}_n - \Psi(\what{T}_n, \theta_0)
  \right) + o_P(1)
  \nonumber
\end{align}
as $p_n \in \interior \dom H_n$ with high probability
by~\eqref{eqn:z-in-range}.

It remains to show that $H_n(p_n) \cp \theta_0$. When $\what{T}_n \in
[t_0 \pm \epsilon]$, the growth condition~\eqref{eqn:positive-expfam-deriv}
implies
\begin{align*}
  \Psi(\what{T}_n,
  \theta_0 + \epsilon)
  = P_{\theta_0 + \epsilon}(T(X) \ge \what{T}_n)
  & \ge P_{\theta_0}(T(X) \ge \what{T}_n)
  + c \epsilon
  = \Psi(\what{T}_n, \theta_0) + c \epsilon \\
  \Psi(\what{T}_n,
  \theta_0 - \epsilon)
  = P_{\theta_0 - \epsilon}(T(X) \ge \what{T}_n)
  & \le P_{\theta_0}(T(X) \ge \what{T}_n)
  - c \epsilon
  = \Psi(\what{T}_n, \theta_0) - c \epsilon,
\end{align*}
and thus 
\begin{equation*}
  \P(|H_n(p_n) - \theta_0| \ge \epsilon)
  \le \P(|\wb{Z}_n - \Psi(\what{T}_n, \theta_0)| \ge c \epsilon)
  + \P(|\what{T}_n - t_0| \ge \epsilon)
  \to 0.
\end{equation*}
We have the convergence $\dot{\Psi}_\theta(\what{T}_n, H_n(p_n)) \cp \half
\E_{\theta_0}[|T(X) - A'(\theta_0)|] = \half \information[\theta_0]$ by the
continuous mapping theorem, and Slutsky's theorem applied to
Eq.~\eqref{eqn:expfam-almost-done} gives the delta-method
expansion~\eqref{eqn:delta-method-expfam}.

\subsubsection{Proof of Lemma~\ref{lemma:continuity-expfam}}
\label{sec:proof-continuity-expfam}

We have
\begin{align*}
  \dot{\Psi}_\theta(t_0, \theta_0)
  - \dot{\Psi}_\theta(t, \theta)
  & = \E_{\theta_0}[\indic{T(X) \ge t_0}
    (T(X) - A'(\theta_0))]
  - \E_\theta[\indic{T(X) \ge t}
    (T(X) - A'(\theta))] \\
  & \stackrel{(i)}{=} \E_{\theta_0}\left[
    \hinge{T(X) - t_0}\right]
  - \E_\theta[\indic{T(X) \ge t}
    (T(X) - t + t - A'(\theta))] \\
  & = \E_{\theta_0}\left[\hinge{T(X) - t_0}\right]
  - \E_\theta\left[\hinge{T(X) - t}\right]
  + P_\theta(T(X) \ge t) (t - A'(\theta)) \\
  & \stackrel{(ii)}{\in} \E_{\theta_0}\left[\hinge{T(X) - t_0}\right]
  - \E_\theta\left[\hinge{T(X) - t_0}\right]
  \pm |t - t_0| \pm |t - A'(\theta)|,
\end{align*}
where step~$(i)$ follows because $t_0 = A'(\theta_0) = \E_{\theta_0}[T(X)]$,
while the inclusion~$(ii)$ is a consequence of the 1-Lipschitz continuity of
$t \mapsto \hinge{t}$.  Now we use the standard
facts that $A(\theta)$ is analytic in $\theta$
and that $\theta \mapsto \E_\theta[f(X)]$ is continuous for any $f$
(cf.~\cite[Thm.~2.7.1]{LehmannRo05}) to see that for
any $\epsilon > 0$, we can choose $\delta > 0$ such that
$|t - t_0| \le \delta$ and $|\theta - \theta_0| \le \delta$ imply
\begin{equation*}
  |t - t_0| \le \epsilon, ~~
  |t - A'(\theta)| \le \epsilon, ~~ \mbox{and} ~~
  \left|
  \E_{\theta_0}\left[\hinge{T(X) - t_0}\right]
  - \E_\theta\left[\hinge{T(X) - t_0}\right]\right| \le \epsilon.
\end{equation*}

\setlength{\bibsep}{4pt}
\bibliography{bib}

\begin{thebibliography}{57}
\providecommand{\natexlab}[1]{#1}
\providecommand{\url}[1]{\texttt{#1}}
\expandafter\ifx\csname urlstyle\endcsname\relax
  \providecommand{\doi}[1]{doi: #1}\else
  \providecommand{\doi}{doi: \begingroup \urlstyle{rm}\Url}\fi

\bibitem[Abadi et~al.(2016)Abadi, Chu, Goodfellow, McMahan, Mironov, Talwar,
  and Zhang]{AbadiChGoMcMiTaZh16}
M.~Abadi, A.~Chu, I.~Goodfellow, B.~McMahan, I.~Mironov, K.~Talwar, and
  L.~Zhang.
\newblock Deep learning with differential privacy.
\newblock In \emph{23rd ACM Conference on Computer and Communications Security
  (ACM CCS)}, pages 308--318, 2016.

\bibitem[Agarwal et~al.(2012)Agarwal, Bartlett, Ravikumar, and
  Wainwright]{AgarwalBaRaWa12}
A.~Agarwal, P.~L. Bartlett, P.~Ravikumar, and M.~J. Wainwright.
\newblock Information-theoretic lower bounds on the oracle complexity of convex
  optimization.
\newblock \emph{IEEE Transactions on Information Theory}, 58\penalty0
  (5):\penalty0 3235--3249, 2012.

\bibitem[{Apple Differential Privacy Team}(2017)]{ApplePrivacy17}
{Apple Differential Privacy Team}.
\newblock Learning with privacy at scale, 2017.
\newblock Available at
  \url{https://machinelearning.apple.com/2017/12/06/learning-with-privacy-at-scale.html}.

\bibitem[Balle et~al.(2019)Balle, Bell, Gascon, and Nissim]{BalleBeGaNi19}
B.~Balle, J.~Bell, A.~Gascon, and K.~Nissim.
\newblock The privacy blanket of the shuffle model.
\newblock In \emph{39th Annual International Cryptology Conference (CRYPTO)},
  2019.

\bibitem[Bickel et~al.(1998)Bickel, Klaassen, Ritov, and
  Wellner]{BickelKlRiWe98}
P.~Bickel, C.~A.~J. Klaassen, Y.~Ritov, and J.~Wellner.
\newblock \emph{Efficient and Adaptive Estimation for Semiparametric Models}.
\newblock Springer Verlag, 1998.

\bibitem[Birg\'e(1983)]{Birge83}
L.~Birg\'e.
\newblock Approximation dans les espaces m\'etriques et th\'eorie de
  l'estimation.
\newblock \emph{Zeitschrift f\"ur Wahrscheinlichkeitstheorie und verwebte
  Gebiet}, 65:\penalty0 181--238, 1983.

\bibitem[Brown(1986)]{Brown86}
L.~D. Brown.
\newblock \emph{Fundamentals of Statistical Exponential Families}.
\newblock Institute of Mathematical Statistics, Hayward, California, 1986.

\bibitem[Brown and Low(1996)]{BrownLo96}
L.~D. Brown and M.~G. Low.
\newblock A constrained risk inequality with applications to nonparametric
  functional estimation.
\newblock \emph{Annals of Statistics}, 24\penalty0 (6):\penalty0 2524--2535,
  1996.

\bibitem[Bun and Steinke(2016)]{BunSt16}
M.~Bun and T.~Steinke.
\newblock Concentrated differential privacy: Simplifications, extensions, and
  lower bounds.
\newblock In \emph{Theory of Cryptography Conference (TCC)}, pages 635--658,
  2016.

\bibitem[Cai and Low(2015)]{CaiLo15}
T.~Cai and M.~Low.
\newblock A framework for estimating convex functions.
\newblock \emph{Statistica Sinica}, 25:\penalty0 423--456, 2015.

\bibitem[Cand\`{e}s et~al.(2018)Cand\`{e}s, Fan, Janson, and
  Lv]{CandesFaJaLv18}
E.~J. Cand\`{e}s, Y.~Fan, L.~Janson, and J.~Lv.
\newblock Panning for gold: Model-{X} knockoffs for high dimensional controlled
  variable selection.
\newblock \emph{Journal of the Royal Statistical Society, Series B},
  80\penalty0 (3):\penalty0 551--577, 2018.

\bibitem[Casella and Strawderman(1981)]{CasellaSt81}
G.~Casella and W.~Strawderman.
\newblock Estimating a bounded normal mean.
\newblock \emph{Annals of Statistics}, 9\penalty0 (4):\penalty0 870--878, 1981.

\bibitem[Chatterjee et~al.(2016)Chatterjee, Duchi, Lafferty, and
  Zhu]{ChatterjeeDuLaZh16}
S.~Chatterjee, J.~Duchi, J.~Lafferty, and Y.~Zhu.
\newblock Local minimax complexity of stochastic convex optimization.
\newblock In \emph{Advances in Neural Information Processing Systems 29}, 2016.

\bibitem[Cohen et~al.(1998)Cohen, Kemperman, and Zbaganu]{CohenKeZb98}
J.~E. Cohen, J.~Kemperman, and G.~Zbaganu.
\newblock \emph{Comparisons of Stochastic Matrices with Applications in
  Information Theory, Statistics, Economics, and Population Sciences}.
\newblock Birkh\"{a}user, Boston, 1998.

\bibitem[Cover and Thomas(2006)]{CoverTh06}
T.~M. Cover and J.~A. Thomas.
\newblock \emph{Elements of Information Theory, Second Edition}.
\newblock Wiley, 2006.

\bibitem[{Del Moral} et~al.(2003){Del Moral}, Ledoux, and
  Miclo]{DelMoralLeMi03}
P.~{Del Moral}, M.~Ledoux, and L.~Miclo.
\newblock On contraction properties of {M}arkov kernels.
\newblock \emph{Probability Theory and Related Fields}, 126:\penalty0 395--420,
  2003.

\bibitem[Dobrushin(1956)]{Dobrushin56}
R.~L. Dobrushin.
\newblock Central limit theorem for nonstationary markov chains. i.
\newblock \emph{Theory of Probability and Its Applications}, 1\penalty0
  (1):\penalty0 65--80, 1956.

\bibitem[Donoho and Liu(1987)]{DonohoLi87}
D.~L. Donoho and R.~C. Liu.
\newblock Geometrizing rates of convergence {I}.
\newblock Technical Report 137, University of California, Berkeley, Department
  of Statistics, 1987.

\bibitem[Donoho and Liu(1991{\natexlab{a}})]{DonohoLi91a}
D.~L. Donoho and R.~C. Liu.
\newblock Geometrizing rates of convergence {II}.
\newblock \emph{Annals of Statistics}, 19\penalty0 (2):\penalty0 633--667,
  1991{\natexlab{a}}.

\bibitem[Donoho and Liu(1991{\natexlab{b}})]{DonohoLi91b}
D.~L. Donoho and R.~C. Liu.
\newblock Geometrizing rates of convergence {III}.
\newblock \emph{Annals of Statistics}, 19\penalty0 (2):\penalty0 688--701,
  1991{\natexlab{b}}.

\bibitem[Duchi and Rogers(2019)]{DuchiRo19}
J.~C. Duchi and R.~Rogers.
\newblock Lower bounds for locally private estimation via communication
  complexity.
\newblock In \emph{Proceedings of the Thirty Second Annual Conference on
  Computational Learning Theory}, 2019.

\bibitem[Duchi and Ruan(2018)]{DuchiRu18}
J.~C. Duchi and F.~Ruan.
\newblock A constrained risk inequality for general losses.
\newblock \emph{arXiv:1804.08116 [stat.TH]}, 2018.

\bibitem[Duchi and Ruan(2020)]{DuchiRu20}
J.~C. Duchi and F.~Ruan.
\newblock Asymptotic optimality in stochastic optimization.
\newblock \emph{Annals of Statistics}, To Appear, 2020.

\bibitem[Duchi et~al.(2013)Duchi, Jordan, and Wainwright]{DuchiJoWa13_focs}
J.~C. Duchi, M.~I. Jordan, and M.~J. Wainwright.
\newblock Local privacy and statistical minimax rates.
\newblock In \emph{54th Annual Symposium on Foundations of Computer Science},
  pages 429--438, 2013.

\bibitem[Duchi et~al.(2018)Duchi, Jordan, and Wainwright]{DuchiJoWa18}
J.~C. Duchi, M.~I. Jordan, and M.~J. Wainwright.
\newblock Minimax optimal procedures for locally private estimation (with
  discussion).
\newblock \emph{Journal of the American Statistical Association}, 113\penalty0
  (521):\penalty0 182--215, 2018.

\bibitem[Dwork and Roth(2014)]{DworkRo14}
C.~Dwork and A.~Roth.
\newblock The algorithmic foundations of differential privacy.
\newblock \emph{Foundations and Trends in Theoretical Computer Science},
  9\penalty0 (3 \& 4):\penalty0 211--407, 2014.

\bibitem[Dwork and Rothblum(2016)]{DworkRo16}
C.~Dwork and G.~Rothblum.
\newblock Concentrated differential privacy.
\newblock \emph{arXiv:1603.01887 [cs.DS]}, 2016.

\bibitem[Dwork and Smith(2009)]{DworkSm09}
C.~Dwork and A.~Smith.
\newblock Differential privacy for statistics: what we know and what we want to
  learn.
\newblock \emph{Journal of Privacy and Confidentiality}, 1\penalty0
  (2):\penalty0 135--154, 2009.

\bibitem[Dwork et~al.(2006)Dwork, McSherry, Nissim, and Smith]{DworkMcNiSm06}
C.~Dwork, F.~McSherry, K.~Nissim, and A.~Smith.
\newblock Calibrating noise to sensitivity in private data analysis.
\newblock In \emph{Proceedings of the Third Theory of Cryptography Conference},
  pages 265--284, 2006.

\bibitem[Erlingsson et~al.(2014)Erlingsson, Pihur, and
  Korolova]{ErlingssonPiKo14}
U.~Erlingsson, V.~Pihur, and A.~Korolova.
\newblock {RAPPOR}: Randomized aggregatable privacy-preserving ordinal
  response.
\newblock In \emph{Proceedings of the 21st ACM Conference on Computer and
  Communications Security (CCS)}, 2014.

\bibitem[Erlingsson et~al.(2019)Erlingsson, Feldman, Mironov, Raghunathan,
  Talwar, and Thakurta]{ErlingssonFeMiRaTaTh19}
U.~Erlingsson, V.~Feldman, I.~Mironov, A.~Raghunathan, K.~Talwar, and
  A.~Thakurta.
\newblock Amplification by shuffling: From local to central differential
  privacy via anonymity.
\newblock In \emph{Proceedings of the Thirtieth ACM-SIAM Symposium on Discrete
  Algorithms (SODA)}, 2019.

\bibitem[{European Union}(2018)]{EU18}
{European Union}.
\newblock 2018 reform of {EU} data protection rules, 2018.
\newblock URL
  \url{https://ec.europa.eu/commission/priorities/justice-and-fundamental-rights/data-protection/2018-reform-eu-data-protection-rules_en}.
\newblock Accessed May 2018.

\bibitem[Evfimievski et~al.(2003)Evfimievski, Gehrke, and
  Srikant]{EvfimievskiGeSr03}
A.~V. Evfimievski, J.~Gehrke, and R.~Srikant.
\newblock Limiting privacy breaches in privacy preserving data mining.
\newblock In \emph{Proceedings of the Twenty-Second Symposium on Principles of
  Database Systems}, pages 211--222, 2003.

\bibitem[Hardt and Talwar(2010)]{HardtTa10}
M.~Hardt and K.~Talwar.
\newblock On the geometry of differential privacy.
\newblock In \emph{Proceedings of the Forty-Second Annual ACM Symposium on the
  Theory of Computing}, pages 705--714, 2010.
\newblock URL \url{http://arxiv.org/abs/0907.3754}.

\bibitem[Hastie et~al.(2009)Hastie, Tibshirani, and Friedman]{HastieTiFr09}
T.~Hastie, R.~Tibshirani, and J.~Friedman.
\newblock \emph{The Elements of Statistical Learning}.
\newblock Springer, second edition, 2009.

\bibitem[Hiriart-Urruty and Lemar\'echal(1993)]{HiriartUrrutyLe93ab}
J.~Hiriart-Urruty and C.~Lemar\'echal.
\newblock \emph{Convex {A}nalysis and {M}inimization {A}lgorithms {I} \& {II}}.
\newblock Springer, New York, 1993.

\bibitem[Homer et~al.(2008)Homer, Szelinger, Redman, Duggan, Tembe, Muehling,
  Pearson, Stephan, Nelson, and Craig]{HomerSzReDuTeMuPeStNeCr08}
N.~Homer, S.~Szelinger, M.~Redman, D.~Duggan, W.~Tembe, J.~Muehling, J.~V.
  Pearson, D.~A. Stephan, S.~F. Nelson, and D.~W. Craig.
\newblock Resolving individuals contributing trace amounts of {DNA} to highly
  complex mixtures using high-density {SNP} genotyping microarrays.
\newblock \emph{PLoS Genetics}, 4\penalty0 (8):\penalty0 e1000167, 2008.

\bibitem[{Le Cam}(1986)]{LeCam86}
L.~{Le Cam}.
\newblock \emph{Asymptotic Methods in Statistical Decision Theory}.
\newblock Springer-Verlag, 1986.

\bibitem[{Le Cam} and Yang(2000)]{LeCamYa00}
L.~{Le Cam} and G.~L. Yang.
\newblock \emph{Asymptotics in Statistics: Some Basic Concepts}.
\newblock Springer, 2000.

\bibitem[Lehmann and Romano(2005)]{LehmannRo05}
E.~L. Lehmann and J.~P. Romano.
\newblock \emph{Testing Statistical Hypotheses, Third Edition}.
\newblock Springer, 2005.

\bibitem[Leysieffer and Warner(1976)]{LeysiefferWa76}
F.~W. Leysieffer and S.~L. Warner.
\newblock Respondent jeopardy and optimal designs in randomized response
  models.
\newblock \emph{Journal of the American Statistical Association}, 71\penalty0
  (355):\penalty0 649--656, 1976.

\bibitem[Liese and Vajda(2006)]{LieseVa06}
F.~Liese and I.~Vajda.
\newblock On divergences and informations in statistics and information theory.
\newblock \emph{IEEE Transactions on Information Theory}, 52\penalty0
  (10):\penalty0 4394--4412, 2006.

\bibitem[Mironov(2017)]{Mironov17}
I.~Mironov.
\newblock R\'{e}nyi differential privacy.
\newblock In \emph{30th IEEE Computer Security Foundations Symposium (CSF)},
  pages 263--275, 2017.

\bibitem[Narayanan and Shmatikov(2008)]{NarayananSh08}
A.~Narayanan and V.~Shmatikov.
\newblock Robust de-anonymization of large sparse datasets.
\newblock In \emph{IEEE Symposium on Security and Privacy}, pages 111--125.
  IEEE, 2008.

\bibitem[Pollard(1997)]{Pollard97}
D.~Pollard.
\newblock Another look at differentiability in quadratic mean.
\newblock In D.~Pollard, E.~Torgersen, and G.~Yang, editors, \emph{Festschrift
  for Lucien Le Cam: Research Papers in Probability and Statistics},
  chapter~19. Springer, 1997.

\bibitem[Polyak and Juditsky(1992)]{PolyakJu92}
B.~T. Polyak and A.~B. Juditsky.
\newblock Acceleration of stochastic approximation by averaging.
\newblock \emph{SIAM Journal on Control and Optimization}, 30\penalty0
  (4):\penalty0 838--855, 1992.

\bibitem[Rohde and Steinberger(2018)]{RohdeSt18}
A.~Rohde and L.~Steinberger.
\newblock Geometrizing rates of convergence under differential privacy
  constraints.
\newblock \emph{arXiv:1805.01422 [stat.ML]}, 2018.

\bibitem[Stein(1956)]{Stein56a}
C.~Stein.
\newblock Efficient nonparametric testing and estimation.
\newblock In \emph{Proceedings of the Third Berkeley Symposium on Mathematical
  Statistics and Probability}, pages 187--195, 1956.

\bibitem[Tsybakov(1998)]{Tsybakov98}
A.~Tsybakov.
\newblock Pointwise and sup-norm sharp adaptive estimation of functions on the
  {S}obolev classes.
\newblock \emph{Annals of Statistics}, 26\penalty0 (6):\penalty0 2420--2469,
  1998.

\bibitem[Tsybakov(2009)]{Tsybakov09}
A.~B. Tsybakov.
\newblock \emph{Introduction to Nonparametric Estimation}.
\newblock Springer, 2009.

\bibitem[Vajda(1972)]{Vajda72}
I.~Vajda.
\newblock On the $f$-divergence and singularity of probability measures.
\newblock \emph{Periodica Mathematica Hungarica}, 2\penalty0 (1--4):\penalty0
  223--234, 1972.

\bibitem[van~der Vaart(1998)]{VanDerVaart98}
A.~W. van~der Vaart.
\newblock \emph{Asymptotic Statistics}.
\newblock Cambridge Series in Statistical and Probabilistic Mathematics.
  Cambridge University Press, 1998.

\bibitem[van Erven and Harremo\"{e}s(2014)]{ErvenHa14}
T.~van Erven and P.~Harremo\"{e}s.
\newblock R\'{e}nyi divergence and {K}ullback-{L}eibler divergence.
\newblock \emph{IEEE Transactions on Information Theory}, 60\penalty0
  (7):\penalty0 3797--3820, 2014.

\bibitem[Wainwright and Jordan(2008)]{WainwrightJo08}
M.~J. Wainwright and M.~I. Jordan.
\newblock Graphical models, exponential families, and variational inference.
\newblock \emph{Foundations and Trends in Machine Learning}, 1\penalty0
  (1--2):\penalty0 1--305, 2008.

\bibitem[Warner(1965)]{Warner65}
S.~Warner.
\newblock Randomized response: a survey technique for eliminating evasive
  answer bias.
\newblock \emph{Journal of the American Statistical Association}, 60\penalty0
  (309):\penalty0 63--69, 1965.

\bibitem[Wasserman and Zhou(2010)]{WassermanZh10}
L.~Wasserman and S.~Zhou.
\newblock A statistical framework for differential privacy.
\newblock \emph{Journal of the American Statistical Association}, 105\penalty0
  (489):\penalty0 375--389, 2010.

\bibitem[Yu(1997)]{Yu97}
B.~Yu.
\newblock Assouad, {F}ano, and {L}e {C}am.
\newblock In \emph{Festschrift for Lucien Le Cam}, pages 423--435.
  Springer-Verlag, 1997.

\end{thebibliography}
\bibliographystyle{abbrvnat}

\end{document}